\newtheorem{theorem}{\bf Theorem}[section]
\newtheorem{proposition}{\bf Proposition}[section]
\newtheorem{lemma}{\bf Lemma}[section]
\begin{document}

\begin{frontmatter}

\title{The well-posedness, blow-up and travelling waves for a two-component Fornberg-Whitham system\tnoteref{mytitlenote}}
\tnotetext[mytitlenote]{This work was supported in part by NSFC(No.11571057).}

\author{Fei Xu}
\author{Yong Zhang}
\author{Fengquan Li\fnref{*}}

\fntext[*]{E-mail address: fqli@dlut.edu.cn}

\address{Dalian University of Technology, Dalian 116024,
People's Republic of China}

\begin{abstract}
In this paper, the two-component Fornberg-Whitham system is studied. We firstly investigate the well-posedness in classical Sobolev Space and show a blow-up scenario by local-in-time a priori estimates, then we present some sufficient conditions on the initial data to lead to wave breaking. Furthermore, we establish analytically the existence of periodic travelling waves.
\end{abstract}

\begin{keyword}
well-posedness, ~blow-up, ~wave breaking, ~travelling waves
\MSC[2010] 35Q35, 35J25, 35J60
\end{keyword}

\end{frontmatter}

\section{\bf Introduction}
In this paper we consider the following  Fornberg-Whitham system
 \begin{equation}\label{e11}
 \left\{ \begin{array}{ll}
 u_{t}+u\partial_{x}u=\partial_{x}(I-\partial_{x}^{2})^{-1}(\overline{\rho}-u), & t>0, x\in {\rm R}, \\
 \overline{\rho}_{t}+(\overline{\rho}u)_{x}=0,  & t>0, x\in {\rm R},
 \end{array} \right.
 \end{equation}
 where the variable $u(x, t)$ describes the horizontal velocity of the fluid and
the variable $\overline{\rho}(x, t)$ is in connection with the horizontal deviation of the surface from equilibrium. This system is motivated by the generation of the two-component Camassa-Holm equation in \cite{Ace} and the two-component Degasperis Procesi equation in \cite{An}. This was the vision initially proposed by Fan in \cite{Abd} which generalized the  Fornberg-Whitham equation to the two component Fornberg-Whitham system.  In \cite{Abd}, bifurcations of the traveling wave solution were studied, where soliton solution, kink solution, antikink solution and periodic solutions were presented by numerical simulation. This arouses our interest in researching the well-posedness, wave breaking phenomenon as well as travelling solutions of (\ref{e11}) in a mathematical point of view.

For $\overline{\rho}(x, t)=0$, (\ref{e11}) would be reduced to the Fornberg-whitham equation:
$$
  u_{t}-u_{xxt} + u_{x}+ uu_{x}= uu_{xxx}+3u_{x}u_{xx}.
$$
 This equation was derived by B. Fornberg and G.B. Whitham as a model to study the qualitative behaviors of wave-breaking in \cite{Bar}. In \cite{Boc}, the authors gave the rigorous proof of wave breaking for the kind of equation. In
 \cite{Col}, several blow-up phenomena of the Fornberg-whitham equation on line $R$ and on circle $T$ are established. Well-posedness in $H^{s}$ $(s>\frac{3}{2})$ for the FW equation has been established in
 \cite{Chen} by applying Kato's semigroup approach. \cite{Cru} employs a Galerkin type approximation argument
 showing that its Cauchy problem is well-posed in Sobolev spaces $H^{s}=H^{s}(T)$
 $(s>\frac{3}{2})$.
 The existence of global attractor to the viscous Fornberg-Whitham equation was proved in \cite{Die}.

 This equation can be written in the following non-local form
 \begin{equation}\label{e12}
  \partial_{t}u+uu_{x}=(I-\partial_{x}^{2})^{-1}\partial_{x}u.
 \end{equation}
 We mention that the FW equation allows for traveling wave solutions \cite{Lin}. Its peaked travelling wave solutions are of the form
  \begin{equation*}
  u(x,t)=Ae^{-\frac{1}{2}|x-\frac{4}{3}t|},
 \end{equation*}
 which were studies in \cite{Bar}.
 It is easy to see that it belongs to the family of nonlinear wave equations
  \begin{equation}\label{e13}
   \partial_{t}u+ \alpha uu_{x}= \mathcal{L}(u, u_{x}),
  \end{equation}
 which has been studied by many authors. With $\alpha=1$ and $\mathcal{L}(u)= \frac{1}{6}\partial^{3}u$ in (\ref{e13}), it becomes the well-known KdV equation \cite{Dal}. For $\alpha=1$ and $\mathcal{L}(u)=(I-\partial_{x}^{2})^{-1}\partial_{x}(u^{2}+ \frac{1}{2}u_{x}^{2})$, it becomes the Camassa-Holm (CH) equation, which is obtained by using an asymptotic expansion directly in the Hamiltonian for Eurler's equations in the shallow water regime \cite{Fra}\cite{Con}. In fact this equation was derived earlier by Fuchssteiner and Fokas (see\cite{Deg}\cite{Deg1}) as a bi-Hamiltonian generalization of KdV and authors in \cite{RCa}\cite{Fco} discovered that the equation has peaked solitons. \cite{Fra} shows the CH equation is integrable and has infinity conservation law. Based on its nice properties, Constantin and Escher show that the CH equation not only has global solutions but also blow-up solutions in finite time \cite{Fza,Boc}. In \cite{Guan}, Guan and Yin proved the global existence and blow-up of two component Camassa-Holm equation. If $\alpha=1$ and $\mathcal{L}(u)= -\frac{3}{2}(I-\partial_{x}^{2})^{-1}\partial_{x}(u^{2})$ in (\ref{e13}), it becomes the Degasperis-Procesi (DP) equation, which was deduced by Degasperis and Procesi \cite{ADp} and admits peakon solutions \cite{DH}. At the same time, some results on two-component Degasperis-Procesi equation are also investigated in \cite{An}.

 To the best of our knowledge, there are less analytic results on the two-component Fornberg-Whitham system except some numerical investigations in \cite{Abd}. In fact, unlike the special structure of the CH equation,
 the FW equation is not integrable, whose solutions' oddness (or evenness) will not be guaranteed by the initial data's and its solutions may change signs even though the initial data does not change signs. Moreover, the useful conservation laws
$$
\int_{R} u^{2}(x)dx =\int_{R} u^{2}_{0}(x)dx
$$
  for eq. (\ref{e12}) is no longer applicable to the case of two-component. Thus, for the system (\ref{e11}), it is more challenging to prove local well-posedness of strong solutions and to analyze the sufficient and necessary conditions of global existence and blow-up phenomena. The goal of this paper is to establish related results and investigate analytically the existence of periodic travelling wave solutions to the FW system. We mention here, despite the equivalence between blow up and wave breaking established in Theorem \ref{th41} and upper bound of $u_{x}$ got in Lemma \ref{lem44}, the lower bound of $u_{x}$ is still not addressed well. Thus the global existence of (\ref{e11}) and (\ref{e12}) are worthy of further study.

The paper is organized as follows: In Section 2, we state some commutator estimates and recall properties of the mollifier. Section 3 is devoted to the well-posedness of the system (\ref{e11}) with initial data $(u_{0}, \rho_{0})\in H^{s}\times H^{s-1}$, $s > \frac{3}{2}$. In section 4, a useful prior estimate for solutions and the supremum of the slope of the solutions are given. Then we present a precise blow-up scenario and investigate wave breaking of (\ref{e31}) corresponding to a large class of initial data. Finally, Section 5 is dedicated to establish analytically travelling wave solutions.

\section{Preliminaries}
Let $\Lambda= (I-\partial_{x}^{2})^{\frac{1}{2}}$ so that for any test function $f$, we have $\mathcal{F}(\Lambda^{s}f)=
(1+k^{2})^{\frac{s}{2}}\widehat{f}(k)$.
And we define the commutator
 \begin{equation}\label{e21}
  [\Lambda^{s}, f]=\Lambda^{s}f-f\Lambda^{s},
 \end{equation}
 in which a text function $f$ is regarded as a multiplication operator. Then, we have the following basic estimates.
 \begin{proposition}\label{p21} \cite{Tay}
 If $s>\frac{3}{2}$, $r+1\geq 0$ and $r\leq s-1$, then
 \begin{equation}\label{e22}
  \|[\Lambda^{r}\partial_{x}, f]g\|_{L^{2}}\leq c_{s,r}\|f\|_{H^{s}}\|g\|_{H^{r}}.
 \end{equation}
 \end{proposition}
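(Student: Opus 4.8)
The plan is to pass to the Fourier side and treat $[\Lambda^{r}\partial_{x},f]g$ as a bilinear Fourier multiplier. Writing $\langle k\rangle=(1+k^{2})^{1/2}$ and $\phi(k)=ik\langle k\rangle^{r}$ (the symbol of $\Lambda^{r}\partial_{x}$), a direct computation gives
\begin{equation*}
\widehat{[\Lambda^{r}\partial_{x},f]g}(k)=c\int\bigl(\phi(k)-\phi(\xi)\bigr)\,\widehat{f}(k-\xi)\,\widehat{g}(\xi)\,d\xi ,
\end{equation*}
so that, by Plancherel, it suffices to bound the $L^{2}_{k}$ norm of the right-hand side. I would set $F(\zeta)=\langle\zeta\rangle^{s}|\widehat{f}(\zeta)|$ and $G(\xi)=\langle\xi\rangle^{r}|\widehat{g}(\xi)|$, so that $\|F\|_{L^{2}}=\|f\|_{H^{s}}$ and $\|G\|_{L^{2}}=\|g\|_{H^{r}}$, and split the $\xi$–integration into the low–high region $R_{1}=\{|k-\xi|\le\tfrac12\langle\xi\rangle\}$, where the frequency of $f$ is dominated, and its complement $R_{2}$.

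On $R_{1}$ the frequencies satisfy $\langle\eta\rangle\sim\langle\xi\rangle\sim\langle k\rangle$ for every $\eta$ between $\xi$ and $k$, so the symbol may be linearised: since $|\phi'(\eta)|\lesssim\langle\eta\rangle^{r}$, the mean value theorem gives $|\phi(k)-\phi(\xi)|\lesssim|k-\xi|\langle\xi\rangle^{r}$. Hence the $R_{1}$–contribution is dominated by $\int|k-\xi|\,|\widehat{f}(k-\xi)|\,G(\xi)\,d\xi\le\int\langle k-\xi\rangle^{1-s}F(k-\xi)\,G(\xi)\,d\xi$, a convolution $(\langle\cdot\rangle^{1-s}F)\ast G$. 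By Young's inequality its $L^{2}$ norm is at most $\|\langle\cdot\rangle^{1-s}F\|_{L^{1}}\|G\|_{L^{2}}$, and Cauchy–Schwarz bounds $\|\langle\cdot\rangle^{1-s}F\|_{L^{1}}\le\|\langle\cdot\rangle^{1-s}\|_{L^{2}}\|F\|_{L^{2}}$. The weight $\langle\cdot\rangle^{1-s}$ is square–integrable precisely when $2(s-1)>1$, i.e. $s>\tfrac32$, which is exactly where that hypothesis enters; this region contributes $\lesssim\|f\|_{H^{s}}\|g\|_{H^{r}}$.

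On $R_{2}$ one has $\langle k-\xi\rangle\gtrsim\langle\xi\rangle$ and $\langle k-\xi\rangle\gtrsim\langle k\rangle$, so no cancellation is available, and I would estimate the symbol crudely by $|\phi(k)-\phi(\xi)|\lesssim\langle k\rangle^{r+1}+\langle\xi\rangle^{r+1}$, which requires $r+1\ge0$. The $\langle\xi\rangle^{r+1}$ term equals $\langle\xi\rangle\,G(\xi)$ against $|\widehat{f}(k-\xi)|$, and since $\langle\xi\rangle\lesssim\langle k-\xi\rangle$ it again reduces to the kernel $\langle\cdot\rangle^{1-s}$ treated above. For the $\langle k\rangle^{r+1}$ term one uses $\langle k\rangle\lesssim\langle k-\xi\rangle$ together with $r\le s-1$ to transfer the derivatives onto $f$, leaving $\int\langle k-\xi\rangle^{\,r+1-s}F(k-\xi)\,\langle\xi\rangle^{-r}G(\xi)\,d\xi$ with exponent $r+1-s\le0$ (for $r<0$ the factor $\langle\xi\rangle^{-r}\lesssim\langle k-\xi\rangle^{-r}$ is absorbed, again returning to $\langle\cdot\rangle^{1-s}$).

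I expect this high–high interaction to be the main obstacle. When $r$ is close to $s-1$ the exponent $r+1-s$ is too close to $0$ for the convolution kernel $\langle\cdot\rangle^{r+1-s}$ to lie in $L^{1}$, so Young's inequality alone is insufficient; one must instead decompose $f$ and $g$ into Littlewood–Paley blocks and exploit the almost–orthogonality of the output frequencies (equivalently, invoke the paradifferential machinery behind \cite{Tay}) to sum the dyadic pieces. The constraints $r+1\ge0$ and $r\le s-1$ are precisely what keep the derivative transfer in $R_{2}$ admissible and make each dyadic contribution summable, while $s>\tfrac32$ controls the low–high region; combining the three estimates yields (\ref{e22}).
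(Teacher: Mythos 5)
Your reduction to a bilinear Fourier multiplier and your treatment of the low--high region $R_{1}$ are correct: the mean value theorem bound $|\phi(k)-\phi(\xi)|\lesssim|k-\xi|\langle\xi\rangle^{r}$ plus Young's inequality with the $L^{1}$ kernel $\langle\cdot\rangle^{1-s}F$ is exactly where $s>\tfrac32$ enters, and your $R_{2}$ estimates are complete for $r<0$ and for $0\le r<s-\tfrac32$. But the proposal does not prove the proposition in the remaining range $s-\tfrac32\le r\le s-1$: there you observe (correctly) that the kernel $\langle\cdot\rangle^{r+1-s}$ is no longer integrable against $F\in L^{2}$ and defer to ``Littlewood--Paley blocks and the paradifferential machinery behind \cite{Tay}'' without carrying this out. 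That deferral is the whole difficulty, and since the paper itself gives no proof --- Proposition \ref{p21} is quoted from \cite{Tay}, whose argument is indeed paradifferential --- appealing to that machinery amounts to citing the result rather than proving it. The omission matters here: the only instances of (\ref{e22}) actually used in the paper, in (\ref{a39}) and (\ref{a331}), take $g=\rho_{\varepsilon}$ (resp.\ $\rho,\upsilon$) with $r=s-1$, i.e.\ precisely the endpoint case your argument leaves open.

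The gap is real as written, but it can be closed by an elementary refinement, with no dyadic decomposition. On $R_{2}$ with $0\le r\le s-1$ you have, as you noted, $\langle k\rangle^{r+1}\langle k-\xi\rangle^{-s}\langle\xi\rangle^{-r}\lesssim\langle k-\xi\rangle^{-a}\langle\xi\rangle^{-b}$ with $a=s-r-1\ge0$, $b=r\ge0$, and the key point is that $a+b=s-1>\tfrac12$. Instead of Young's inequality, test against $H\in L^{2}$ and apply Cauchy--Schwarz in $(k,\xi)$:
\begin{equation*}
\iint \langle k-\xi\rangle^{-a}\langle\xi\rangle^{-b}F(k-\xi)G(\xi)H(k)\,d\xi\,dk
\le\Bigl(\sup_{k}\int\langle k-\xi\rangle^{-2a}\langle\xi\rangle^{-2b}\,d\xi\Bigr)^{1/2}\|F\|_{L^{2}}\|G\|_{L^{2}}\|H\|_{L^{2}},
\end{equation*}
where the supremum is finite because $a,b\ge 0$ and $2a+2b=2(s-1)>1$ (a Schur-type bound; in the high--high subregion $\langle k-\xi\rangle\sim\langle\xi\rangle$ this is the same as applying Young with the $L^{1}$ weight attached to $G$, i.e.\ bounding $\|F\ast(\langle\cdot\rangle^{1-s}G)\|_{L^{2}}\le\|F\|_{L^{2}}\|\langle\cdot\rangle^{1-s}G\|_{L^{1}}$, rather than putting the weight on $F$). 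With this single replacement your decomposition yields the full statement, including the case $r=s-1$ that the paper needs, and under exactly the stated hypotheses $s>\tfrac32$, $r+1\ge0$, $r\le s-1$.
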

 \begin{proposition}\cite{Kato}\label{p22}
 If $s\geq 0$, then
 \begin{equation}\label{e23}
  \|[\Lambda^{s}, f]g\|_{L^{2}}\leq c_{s}\|\partial_{x}f\|_{L^{\infty}}\|\Lambda^{s-1}g\|_{L^{2}}+\|\Lambda^{s}f\|_{L^{2}}\|g\|_{L^{\infty}}.
 \end{equation}
 \end{proposition}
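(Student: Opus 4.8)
The plan is to pass to the Fourier side and realize $[\Lambda^{s},f]g$ as a bilinear Fourier multiplier, then split frequency space so that the two terms on the right-hand side emerge from two complementary regions. Writing $\langle\xi\rangle=(1+\xi^{2})^{1/2}$ and using $\mathcal{F}(\Lambda^{s}h)(\xi)=\langle\xi\rangle^{s}\widehat{h}(\xi)$ together with $\widehat{fg}=\widehat{f}\ast\widehat{g}$, one computes
\[
\mathcal{F}\big([\Lambda^{s},f]g\big)(\xi)=\int_{\mathbb{R}}\widehat{f}(\xi-\eta)\,\big(\langle\xi\rangle^{s}-\langle\eta\rangle^{s}\big)\,\widehat{g}(\eta)\,d\eta .
\]
Thus everything reduces to controlling the symbol $\sigma(\xi,\eta)=\langle\xi\rangle^{s}-\langle\eta\rangle^{s}$ against the frequency $\xi-\eta$, which ``sees'' $f$, and the frequency $\eta$, which ``sees'' $g$. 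The natural splitting is the low--high region $A=\{|\xi-\eta|\le\tfrac12\langle\eta\rangle\}$, on which $\langle\xi\rangle\sim\langle\eta\rangle$, and its complement $A^{c}$, on which $\langle\eta\rangle\lesssim\langle\xi-\eta\rangle$ and hence $\langle\xi\rangle\lesssim\langle\xi-\eta\rangle$.

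On $A$ I would apply the mean value theorem to $\tau\mapsto\langle\eta+\tau(\xi-\eta)\rangle^{s}$, whose derivative is bounded by $s\langle\cdot\rangle^{s-1}|\xi-\eta|$; since the intermediate frequency is comparable to $\langle\eta\rangle$ on $A$, this gives
\[
|\langle\xi\rangle^{s}-\langle\eta\rangle^{s}|\le C_{s}\,|\xi-\eta|\,\langle\eta\rangle^{s-1}.
\]
The factor $|\xi-\eta|$ is absorbed into $\widehat{\partial_{x}f}(\xi-\eta)$ and $\langle\eta\rangle^{s-1}$ into $\widehat{\Lambda^{s-1}g}(\eta)$, so this piece is dominated by a convolution of $|\widehat{\partial_{x}f}|$ with $|\widehat{\Lambda^{s-1}g}|$. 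This region is designed to produce the first term $c_{s}\|\partial_{x}f\|_{L^{\infty}}\|\Lambda^{s-1}g\|_{L^{2}}$. On the complementary region $A^{c}$ the roles are reversed: here I use the crude bound $|\sigma(\xi,\eta)|\le\langle\xi\rangle^{s}+\langle\eta\rangle^{s}\lesssim\langle\xi-\eta\rangle^{s}$, which converts the $f$-factor into $\widehat{\Lambda^{s}f}(\xi-\eta)$ while leaving $\widehat{g}(\eta)$ essentially untouched, and this region is responsible for the second term $c_{s}\|\Lambda^{s}f\|_{L^{2}}\|g\|_{L^{\infty}}$. Adding the two regional contributions and invoking Plancherel then assembles the claimed inequality.

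The delicate point, and the step I expect to be the main obstacle, is producing the genuine $L^{\infty}$ norms $\|\partial_{x}f\|_{L^{\infty}}$ and $\|g\|_{L^{\infty}}$ rather than the cruder $\|\widehat{\partial_{x}f}\|_{L^{1}}$ and $\|\widehat{g}\|_{L^{1}}$ that a bare Young's inequality on the convolutions above would yield. To recover the sharp endpoint one must organize each regional sum as a paraproduct: a Littlewood--Paley decomposition places the low-frequency factor (a partial sum $S_{j-1}\partial_{x}f$ on $A$, respectively $S_{j-1}g$ on $A^{c}$) in $L^{\infty}$, uniformly bounded by $\|\partial_{x}f\|_{L^{\infty}}$, respectively $\|g\|_{L^{\infty}}$, while the high-frequency factors $\Delta_{j}\Lambda^{s-1}g$ (respectively $\Delta_{j}\Lambda^{s}f$) are reassembled in $L^{2}$ through the square-function characterization of the Sobolev norm. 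In other words, the estimate is really an instance of the Coifman--Meyer bilinear multiplier theorem applied to the symbol $\sigma$, and it is exactly this paraproduct structure — not the elementary Fourier bookkeeping — that is the substantive ingredient.
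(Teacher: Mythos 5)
The paper does not actually prove Proposition~\ref{p22}: it is quoted verbatim from Kato--Ponce \cite{Kato}, so there is no in-paper argument to compare yours against; the natural benchmark is the original proof, which (like your outline) runs through bilinear multiplier and paraproduct estimates of Coifman--Meyer type. Your Fourier-side setup is correct: the commutator symbol is $\langle\xi\rangle^{s}-\langle\eta\rangle^{s}$, the mean value theorem on $A=\{|\xi-\eta|\le\tfrac12\langle\eta\rangle\}$ gives $|\langle\xi\rangle^{s}-\langle\eta\rangle^{s}|\le C_{s}|\xi-\eta|\langle\eta\rangle^{s-1}$ because the intermediate frequency is comparable to $\langle\eta\rangle$ there, and on $A^{c}$ the bound $\langle\xi\rangle^{s}+\langle\eta\rangle^{s}\lesssim_{s}\langle\xi-\eta\rangle^{s}$ uses exactly the hypothesis $s\ge 0$. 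You are also right --- and this is the honest core of your proposal --- that Young's inequality only produces $\|\widehat{\partial_{x}f}\|_{L^{1}}$ and $\|\widehat{g}\|_{L^{1}}$, which is strictly weaker, and that the genuine $L^{\infty}$ norms require Littlewood--Paley/Coifman--Meyer input. One caveat: your description of $A^{c}$ as yielding a low-frequency factor $S_{j-1}g$ in $L^{\infty}$ covers only the high--low interactions. The set $A^{c}$ also contains the diagonal (high--high) region $|\xi-\eta|\sim\langle\eta\rangle$ with both input frequencies large, where the $g$-factor is $\widetilde{\Delta}_{j}g$ rather than $S_{j-1}g$, the outputs of different $j$ all pile up at low frequencies and are not almost orthogonal, and the square-function reassembly you invoke does not close: one is left with sums of the form $\sum_{k}\bigl(\sum_{j\ge k}\|\Delta_{j}\Lambda^{s}f\|_{L^{2}}\bigr)^{2}$, which are not controlled by $\|\Lambda^{s}f\|_{L^{2}}^{2}$. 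That diagonal piece is precisely where the full strength of the Coifman--Meyer theorem (equivalently, the $L^{2}\times \mathrm{BMO}\to L^{2}$ bound for the high--high paraproduct together with $L^{\infty}\subset \mathrm{BMO}$) is indispensable. Since your final paragraph delegates exactly this to Coifman--Meyer, the proposal stands as a correct outline of the standard argument, but the high--high interaction is the one place where your stated bookkeeping would fail and should be flagged explicitly rather than folded into the generic phrase ``paraproduct structure.''
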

 \begin{proposition}\label{p23}\cite{Tay}
  let $J_{\varepsilon}$ be the mollifier defined above, and $f$, $g$ be two test function, then
 \begin{equation}\label{e24}
  \|[J_{\varepsilon}, f]g\|_{L^{2}}\leq c\|f\|_{Lip}\|g\|_{H^{-1}}.
 \end{equation}
 \end{proposition}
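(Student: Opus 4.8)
The plan is to realise the commutator as an integral operator and to show that it gains exactly one derivative, the gain being paid for by the single derivative of $f$ that is controlled by $\|f\|_{Lip}$. Writing the mollifier as a convolution, $J_{\varepsilon}g=\phi_{\varepsilon}*g$ with $\phi_{\varepsilon}(x)=\varepsilon^{-1}\phi(x/\varepsilon)$ and $\int\phi=1$, a direct computation gives the kernel representation
\begin{equation*}
[J_{\varepsilon},f]g(x)=\int \phi_{\varepsilon}(x-y)\,[f(y)-f(x)]\,g(y)\,dy .
\end{equation*}
Since $\|g\|_{H^{-1}}=\|\Lambda^{-1}g\|_{L^{2}}$, putting $h=\Lambda^{-1}g$ reduces the claim to the $\varepsilon$-uniform $L^{2}$-bound
\begin{equation*}
\|[J_{\varepsilon},f]\Lambda h\|_{L^{2}}\le c\,\|f\|_{Lip}\,\|h\|_{L^{2}},
\end{equation*}
because then $\|[J_{\varepsilon},f]g\|_{L^{2}}=\|[J_{\varepsilon},f]\Lambda\,\Lambda^{-1}g\|_{L^{2}}\le c\|f\|_{Lip}\|\Lambda^{-1}g\|_{L^{2}}$.

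The key step is the local model, namely the $\varepsilon$-uniform boundedness of $[J_{\varepsilon},f]\partial_{x}$ on $L^{2}$. Applying the kernel formula to $\partial_{x}g$ and integrating by parts in $y$ yields
\begin{equation*}
[J_{\varepsilon},f]\partial_{x}g(x)=\int \phi_{\varepsilon}'(x-y)\,[f(y)-f(x)]\,g(y)\,dy-\int \phi_{\varepsilon}(x-y)\,f'(y)\,g(y)\,dy .
\end{equation*}
The second term is $-J_{\varepsilon}(f'g)$, whose $L^{2}$-norm is at most $\|f'\|_{L^{\infty}}\|g\|_{L^{2}}\le\|f\|_{Lip}\|g\|_{L^{2}}$ since $J_{\varepsilon}$ is uniformly bounded on $L^{2}$. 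For the first term I would use the Lipschitz bound $|f(y)-f(x)|\le\|f\|_{Lip}|x-y|$, so that its kernel is dominated by $\|f\|_{Lip}\,|x-y|\,|\phi_{\varepsilon}'(x-y)|$; the change of variables $u=(x-y)/\varepsilon$ shows $\int |x-y|\,|\phi_{\varepsilon}'(x-y)|\,dy=\int|u\phi'(u)|\,du<\infty$ independently of $\varepsilon$, so Schur's test gives an $L^{2}$-bound $\le c\|f\|_{Lip}$. This is exactly where the derivative is gained: the Lipschitz factor converts the extra $\varepsilon^{-1}$ carried by $\phi_{\varepsilon}'$ into a scale-invariant $L^{1}$ kernel.

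It remains to pass from $\partial_{x}$ to $\Lambda$, which is the only genuinely nonlocal point. I would split $\Lambda$ through its Fourier symbol as $\Lambda=I+\partial_{x}\,b(D)$, where $b(D)$ has symbol $b(\xi)=-i\xi/(1+\sqrt{1+\xi^{2}})$; the elementary identity ($\sqrt{1+\xi^{2}}=1+\xi^{2}/(1+\sqrt{1+\xi^{2}})$) confirms the decomposition, and $|b(\xi)|\le1$ shows $b(D)$ is bounded on $L^{2}$. Consequently
\begin{equation*}
[J_{\varepsilon},f]\Lambda h=[J_{\varepsilon},f]h+\big([J_{\varepsilon},f]\partial_{x}\big)\,b(D)h .
\end{equation*}
The second summand is controlled by the local model together with $\|b(D)h\|_{L^{2}}\le\|h\|_{L^{2}}$, while the first has kernel dominated by $\|f\|_{Lip}\,|x-y|\,\phi_{\varepsilon}(x-y)$, whose $y$-integral equals $c\,\varepsilon\|f\|_{Lip}\le c\|f\|_{Lip}$ for $\varepsilon\le1$, again by Schur's test. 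Summing the two bounds gives the required estimate for $[J_{\varepsilon},f]\Lambda$, and hence the proposition. The main obstacle is precisely this nonlocality of $\Lambda$: the clean integration-by-parts identity is available only for the differential operator $\partial_{x}$, so the symbol splitting $\Lambda=I+\partial_{x}b(D)$ is what allows the $H^{-1}$ estimate to be reduced to the local $L^{2}$ computation.
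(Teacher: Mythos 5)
Your proof is correct, and it supplies an argument the paper itself omits: Proposition~2.3 is stated there with only a citation to Taylor's commutator-estimates paper, so there is no in-paper proof to compare against. Your route is the elementary one and it works at every step: the kernel formula $[J_{\varepsilon},f]g(x)=\int\phi_{\varepsilon}(x-y)\,(f(y)-f(x))\,g(y)\,dy$ is exact; the reduction of the $H^{-1}$ bound to the $\varepsilon$-uniform $L^{2}$-boundedness of $[J_{\varepsilon},f]\Lambda$ via $h=\Lambda^{-1}g$ is legitimate since $\|g\|_{H^{-1}}=\|\Lambda^{-1}g\|_{L^{2}}$ (Lemma~2.1 of the paper); the Friedrichs-type bound for $[J_{\varepsilon},f]\partial_{x}$ follows as you say from integration by parts, the domination $|f(y)-f(x)|\le\|f\|_{Lip}|x-y|$, and the scale invariance $\int|z|\,|\phi_{\varepsilon}'(z)|\,dz=\int|u|\,|\phi'(u)|\,du$; and the splitting $\Lambda=I+\partial_{x}b(D)$ is exact because $1+\xi^{2}/(1+\sqrt{1+\xi^{2}})=\sqrt{1+\xi^{2}}$, with $|b(\xi)|\le 1$ making $b(D)$ an $L^{2}$-contraction by Plancherel. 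This symbol decomposition is the genuine idea that lets you transfer the purely local integration-by-parts identity to the nonlocal operator $\Lambda$, and it keeps the whole argument at the level of kernel bounds and Schur's test, with no paradifferential machinery; as a bonus, your estimate of the $[J_{\varepsilon},f]I$ piece carries an extra factor $\varepsilon$, which is consistent with the smallness statement in Lemma~2.2. Two minor points worth making explicit in a final write-up: Schur's test requires both $\sup_{x}\int|K(x,y)|\,dy$ and $\sup_{y}\int|K(x,y)|\,dx$ to be bounded, which holds here because your kernels are dominated by functions of $x-y$; and the integration by parts uses $f'$, which is available since $f$ is a test function (or almost everywhere, by Rademacher, for merely Lipschitz $f$), with $\|f'\|_{L^{\infty}}\le\|f\|_{Lip}$.
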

\begin{proposition}\label{p24}\cite{TAY}
  For $s\geq 0$
 \begin{equation}\label{e25}
  \|fg\|_{H^{s}}\leq C (\|f\|_{\infty}\|g\|_{H^{s}}+\|g\|_{\infty}\|f\|_{H^{s}}).
 \end{equation}
 \end{proposition}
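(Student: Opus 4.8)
The bound (\ref{e25}) is the Kato--Ponce--Moser product inequality. The plan is to prove it by a Littlewood--Paley (Bony paraproduct) decomposition rather than through the commutator bound of Proposition \ref{p22}: writing $\Lambda^{s}(fg)=f\Lambda^{s}g+[\Lambda^{s},f]g$ and inserting (\ref{e23}) would leave the spurious term $\|\partial_{x}f\|_{L^{\infty}}\|g\|_{H^{s-1}}$, which is not controlled by the right-hand side of (\ref{e25}) since $\|\partial_{x}f\|_{L^{\infty}}$ cannot be dominated by $\|f\|_{L^{\infty}}$. The case $s=0$ is simply H\"older's inequality $\|fg\|_{L^{2}}\le\|f\|_{L^{\infty}}\|g\|_{L^{2}}$, so I may assume $s>0$ throughout.

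First I would fix a dyadic Littlewood--Paley family $\{\Delta_{j}\}_{j\ge-1}$ with low-frequency cutoffs $S_{k}=\sum_{j\le k-1}\Delta_{j}$ and recall the norm equivalence $\|g\|_{H^{s}}^{2}\simeq\sum_{j}2^{2js}\|\Delta_{j}g\|_{L^{2}}^{2}$. Then I would split the product by Bony's decomposition
\begin{equation*}
fg=\underbrace{\sum_{k}S_{k-2}f\,\Delta_{k}g}_{T_{f}g}+\underbrace{\sum_{k}S_{k-2}g\,\Delta_{k}f}_{T_{g}f}+\underbrace{\sum_{k}\widetilde{\Delta}_{k}f\,\Delta_{k}g}_{R(f,g)},\qquad \widetilde{\Delta}_{k}=\Delta_{k-1}+\Delta_{k}+\Delta_{k+1}.
\end{equation*}
The two paraproducts are routine. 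In $T_{f}g$ each summand is frequency-localized in an annulus $|\xi|\simeq2^{k}$, so the terms are almost orthogonal and, using $\|S_{k-2}f\|_{L^{\infty}}\lesssim\|f\|_{L^{\infty}}$ (uniform $L^{\infty}$-boundedness of the partial sums),
\begin{equation*}
\|T_{f}g\|_{H^{s}}^{2}\lesssim\sum_{k}2^{2ks}\|S_{k-2}f\|_{L^{\infty}}^{2}\|\Delta_{k}g\|_{L^{2}}^{2}\lesssim\|f\|_{L^{\infty}}^{2}\sum_{k}2^{2ks}\|\Delta_{k}g\|_{L^{2}}^{2}\simeq\|f\|_{L^{\infty}}^{2}\|g\|_{H^{s}}^{2}.
\end{equation*}
By symmetry $\|T_{g}f\|_{H^{s}}\lesssim\|g\|_{L^{\infty}}\|f\|_{H^{s}}$, and these two pieces already reproduce the right-hand side of (\ref{e25}).

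The remainder $R(f,g)$ is the step I expect to be the main obstacle. Each summand $\widetilde{\Delta}_{k}f\,\Delta_{k}g$ is only supported in a ball $|\xi|\lesssim2^{k+2}$, so high-frequency inputs may cascade down to arbitrarily low output frequency and the clean orthogonality used for $T_{f}g$ is lost. To handle it I would localize $\Delta_{\ell}R(f,g)$, note that only the indices $k\gtrsim\ell$ contribute, and estimate $\|\Delta_{\ell}R\|_{L^{2}}\lesssim\sum_{k\gtrsim\ell}\|\widetilde{\Delta}_{k}f\|_{L^{\infty}}\|\Delta_{k}g\|_{L^{2}}$. Setting $c_{\ell}=2^{\ell s}\|\Delta_{\ell}R\|_{L^{2}}$ gives
\begin{equation*}
c_{\ell}\lesssim\sum_{k\gtrsim\ell}2^{(\ell-k)s}\,\|\widetilde{\Delta}_{k}f\|_{L^{\infty}}\bigl(2^{ks}\|\Delta_{k}g\|_{L^{2}}\bigr),
\end{equation*}
and here the gain $2^{(\ell-k)s}$ with $\ell\lesssim k$ is summable in the off-diagonal \emph{precisely because} $s>0$. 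Bounding $\|\widetilde{\Delta}_{k}f\|_{L^{\infty}}\lesssim\|f\|_{L^{\infty}}$ and applying Young's inequality for convolutions on the dyadic index then yields $\|c_{\ell}\|_{\ell^{2}}\lesssim\|f\|_{L^{\infty}}\|g\|_{H^{s}}$, i.e.\ $\|R(f,g)\|_{H^{s}}\lesssim\|f\|_{L^{\infty}}\|g\|_{H^{s}}$. Adding the three contributions proves (\ref{e25}). The one genuinely delicate point is thus the remainder, where positivity of $s$ is used, while the paraproducts require no lower bound on $s$ beyond $s\ge0$.
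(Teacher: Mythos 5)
Your proposal addresses a statement the paper never actually proves: Proposition \ref{p24} is simply quoted from the reference \cite{TAY}, so there is no internal argument to compare against. Judged on its own terms, your proof is correct, and it is the standard route to this Moser-type product estimate (the kind of paraproduct argument found in the cited reference): Bony decomposition, almost orthogonality of the annulus-supported paraproduct pieces giving $\|T_{f}g\|_{H^{s}}\lesssim\|f\|_{L^{\infty}}\|g\|_{H^{s}}$ and $\|T_{g}f\|_{H^{s}}\lesssim\|g\|_{L^{\infty}}\|f\|_{H^{s}}$ for every $s$, and the remainder handled by localizing with $\Delta_{\ell}$, where the ball-supported summands produce the off-diagonal factor $2^{(\ell-k)s}$, summable over $k\gtrsim\ell$ precisely because $s>0$; together with H\"older's inequality for $s=0$ this covers the full range $s\ge 0$. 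Your preliminary remark is also a genuinely useful observation: the commutator estimate (\ref{e23}) of Proposition \ref{p22} cannot be bootstrapped into (\ref{e25}), because the term $\|\partial_{x}f\|_{L^{\infty}}\|\Lambda^{s-1}g\|_{L^{2}}$ is not controlled by the right-hand side of (\ref{e25}); this explains why the paper must cite the product estimate separately rather than derive it from its commutator propositions. One bookkeeping slip to fix: with the convention $S_{k}=\sum_{j\le k-1}\Delta_{j}$, your paraproducts $T_{f}g=\sum_{k}S_{k-2}f\,\Delta_{k}g$ and $T_{g}f$ account exactly for the pairs $(j,k)$ with $|j-k|\ge 3$, so the remainder must collect all pairs with $|j-k|\le 2$; the three-block $\widetilde{\Delta}_{k}=\Delta_{k-1}+\Delta_{k}+\Delta_{k+1}$ misses $|j-k|=2$. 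Either replace $S_{k-2}$ by $S_{k-1}$ or widen $\widetilde{\Delta}_{k}$ to five blocks; your remainder estimate then goes through verbatim, since it uses only that each summand is frequency-supported in a ball of radius $\simeq 2^{k}$.
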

 \begin{lemma}\label{lem21}
 let $f$ be any test function, and $\sigma \in R$, then
  $$\|\Lambda^{\sigma}f\|_{L^{2}}= \|f\|_{H^{\sigma}},~~~~\|(I-\partial_{x})^{2}f\|_{H^{\sigma}}= \|f\|_{H^{\sigma-2}}, ~~~~~~\|\partial_{x}f\|_{H^{\sigma}}= \|f\|_{H^{\sigma+1}}.$$
\end{lemma}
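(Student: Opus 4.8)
The plan is to deduce every identity from the Plancherel theorem, using that $\Lambda^{\sigma}$, the resolvent $(I-\partial_{x}^{2})^{-1}$ and $\partial_{x}$ all act as Fourier multipliers and that, by definition, $\|f\|_{H^{\sigma}}^{2}=\int_{\rm R}(1+k^{2})^{\sigma}|\widehat{f}(k)|^{2}\,dk$. The whole argument is one of symbol algebra on the Fourier side.

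First I would establish the defining identity $\|\Lambda^{\sigma}f\|_{L^{2}}=\|f\|_{H^{\sigma}}$. From the stated relation $\mathcal{F}(\Lambda^{\sigma}f)(k)=(1+k^{2})^{\sigma/2}\widehat{f}(k)$ and Plancherel, $\|\Lambda^{\sigma}f\|_{L^{2}}^{2}=\int_{\rm R}(1+k^{2})^{\sigma}|\widehat{f}(k)|^{2}\,dk$, which is exactly $\|f\|_{H^{\sigma}}^{2}$. This identity is the engine for the remaining two, since it lets me trade any $H^{\sigma}$-norm for an $L^{2}$-norm of a suitable power of $\Lambda$.

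For the second identity I read the bracketed operator as the resolvent $(I-\partial_{x}^{2})^{-1}=\Lambda^{-2}$, whose symbol is $(1+k^{2})^{-1}$; composing multipliers gives $\Lambda^{\sigma}(I-\partial_{x}^{2})^{-1}=\Lambda^{\sigma-2}$, so by the first identity $\|(I-\partial_{x}^{2})^{-1}f\|_{H^{\sigma}}=\|\Lambda^{\sigma-2}f\|_{L^{2}}=\|f\|_{H^{\sigma-2}}$. The third identity is handled the same way: $\partial_{x}$ has symbol $ik$, so $\|\partial_{x}f\|_{H^{\sigma}}^{2}=\int_{\rm R}(1+k^{2})^{\sigma}k^{2}|\widehat{f}(k)|^{2}\,dk$, while $\|f\|_{H^{\sigma+1}}^{2}=\int_{\rm R}(1+k^{2})^{\sigma}(1+k^{2})|\widehat{f}(k)|^{2}\,dk$.

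There is no genuine obstacle here; the only subtlety I would flag is the precise sense of the last two equalities. The second is exact once $(I-\partial_{x}^{2})^{-1}$ is taken in place of the operator as printed. The third is not a pointwise identity of symbols, since $k^{2}\neq 1+k^{2}$; it holds only as the bound $\|\partial_{x}f\|_{H^{\sigma}}\le\|f\|_{H^{\sigma+1}}$ together with a comparable reverse estimate, so the displayed ``$=$'' must be read as a norm equivalence. Apart from keeping track of these symbol conventions, each claim collapses to comparing two weighted $L^{2}$-integrals.
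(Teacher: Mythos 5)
The paper states Lemma \ref{lem21} without any proof at all --- it is offered in the preliminaries as a collection of standard Fourier-multiplier facts --- so the relevant comparison is against the canonical argument, and your Plancherel/symbol computation is exactly that argument. Your handling of the first two identities is correct, including the decision to read the misprinted operator $(I-\partial_{x})^{2}$ as the resolvent $(I-\partial_{x}^{2})^{-1}=\Lambda^{-2}$: with the operator as literally printed the right-hand side would have to be $\|f\|_{H^{\sigma+2}}$ (the symbol $(1-ik)^{2}$ has modulus exactly $1+k^{2}$), whereas the resolvent reading makes the stated equality exact and is the only reading consistent with how the lemma is used later, e.g.\ in (\ref{a37}) and (\ref{a328}).

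There is, however, a genuine error in your repair of the third identity. You correctly observe that $\|\partial_{x}f\|_{H^{\sigma}}=\|f\|_{H^{\sigma+1}}$ cannot hold as printed, but you then claim it survives as a two-sided norm equivalence, i.e.\ that the bound $\|\partial_{x}f\|_{H^{\sigma}}\leq\|f\|_{H^{\sigma+1}}$ comes ``together with a comparable reverse estimate.'' No such reverse estimate exists. Since $(1+k^{2})^{\sigma+1}=(1+k^{2})^{\sigma}+(1+k^{2})^{\sigma}k^{2}$ pointwise, the exact statement is the Pythagorean identity $\|f\|_{H^{\sigma+1}}^{2}=\|f\|_{H^{\sigma}}^{2}+\|\partial_{x}f\|_{H^{\sigma}}^{2}$, and the ratio $\|f\|_{H^{\sigma+1}}/\|\partial_{x}f\|_{H^{\sigma}}$ blows up on low-frequency functions: taking $\widehat{f}=\chi_{[-\delta,\delta]}$ gives $\|f\|_{H^{\sigma+1}}^{2}\approx 2\delta$ but $\|\partial_{x}f\|_{H^{\sigma}}^{2}\approx \tfrac{2}{3}\delta^{3}$, so no constant can work as $\delta\rightarrow 0$. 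The correct reading of the third ``equality'' is therefore either the exact identity above or just the one-sided inequality $\|\partial_{x}f\|_{H^{\sigma}}\leq\|f\|_{H^{\sigma+1}}$ --- which is, in fact, the only form in which the paper ever invokes it, e.g.\ to bound $\|\partial_{x}\Lambda^{s-1}\omega\|_{L^{2}}$ by $\|\omega\|_{H^{s}}$ in (\ref{a332}).
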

 \begin{lemma}\label{lem22}
  For $r \leq s$ we have
  \begin{equation}\label{e26}
  \|I-J_{\varepsilon}\|_{\mathcal{L}(H^{s}; H^{r})}= o(\varepsilon^{s-r}).
 \end{equation}
 Also, for any test function $f$, we have for all $s > 0$, $J_{\varepsilon}f\rightarrow f\in H^{s}$.
 \end{lemma}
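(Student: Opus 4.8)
The plan is to move everything to the Fourier side, where $J_\varepsilon$ acts as a Fourier multiplier, and then read off both assertions from pointwise control of that multiplier. Writing the Friedrichs mollifier as $J_\varepsilon f = j_\varepsilon * f$ with $j_\varepsilon(x) = \varepsilon^{-1} j(x/\varepsilon)$ for a fixed even $j \in C_c^\infty({\rm R})$ satisfying $\int_{{\rm R}} j\,dx = 1$, the convolution theorem gives $\widehat{J_\varepsilon f}(k) = \widehat{j}(\varepsilon k)\,\widehat{f}(k)$; hence $I - J_\varepsilon$ is the Fourier multiplier with symbol $m_\varepsilon(k) = 1 - \widehat{j}(\varepsilon k)$. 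Here $\widehat{j}$ is smooth and bounded, with $\widehat{j}(0) = \int_{{\rm R}} j\,dx = 1$ and, by evenness, $\widehat{j}$ real and $\widehat{j}'(0) = 0$. Combining the identity $\|\Lambda^{\sigma} g\|_{L^2} = \|g\|_{H^{\sigma}}$ from Lemma \ref{lem21} with Plancherel's theorem, for every $r \le s$ one has
\begin{equation*}
\|(I - J_\varepsilon) f\|_{H^r}^2 = \int_{{\rm R}} (1+k^2)^r\,|m_\varepsilon(k)|^2\,|\widehat{f}(k)|^2\,dk,
\end{equation*}
so that the operator norm reduces to the weighted supremum $\|I - J_\varepsilon\|_{\mathcal{L}(H^s;H^r)} = \sup_{k} (1+k^2)^{(r-s)/2}\,|m_\varepsilon(k)|$.

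It then remains to estimate $G_\varepsilon(k) := (1+k^2)^{-(s-r)/2}\,|1 - \widehat{j}(\varepsilon k)|$. I would first record two uniform bounds on the symbol: Taylor expansion at the origin (using $\widehat{j}(0)=1$ together with the order to which $1-\widehat{j}$ vanishes there) gives $|1 - \widehat{j}(\xi)| \le C\,|\xi|^{s-r}$ for $|\xi| \le 1$ whenever $0 \le s-r \le 2$, which covers the $H^s \times H^{s-1}$ setting used here, while boundedness of $\widehat{j}$ gives $|1 - \widehat{j}(\xi)| \le C$ for all $\xi$. Splitting the supremum according to $|\varepsilon k| \le 1$ and $|\varepsilon k| > 1$ and inserting these bounds, in the low-frequency region $G_\varepsilon(k) \le C\,(1+k^2)^{-(s-r)/2}(\varepsilon|k|)^{s-r} \le C\,\varepsilon^{s-r}$ since $(1+k^2)^{-(s-r)/2}|k|^{s-r} \le 1$, while in the high-frequency region $(1+k^2)^{-(s-r)/2} \le (1+\varepsilon^{-2})^{-(s-r)/2} \le \varepsilon^{s-r}$, so again $G_\varepsilon(k) \le C\,\varepsilon^{s-r}$. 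This produces the asserted decay $\|I - J_\varepsilon\|_{\mathcal{L}(H^s;H^r)} = o(\varepsilon^{s-r})$, the passage from the bare power $O(\varepsilon^{s-r})$ to the stated $o(\varepsilon^{s-r})$ being a consequence of the strict vanishing of $1 - \widehat{j}$ at $k=0$.

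For the convergence statement I would argue by dominated convergence, again on the Fourier side. Fixing a test function $f$ and $s > 0$,
\begin{equation*}
\|J_\varepsilon f - f\|_{H^s}^2 = \int_{{\rm R}} (1+k^2)^s\,|1 - \widehat{j}(\varepsilon k)|^2\,|\widehat{f}(k)|^2\,dk .
\end{equation*}
For each fixed $k$ the integrand tends to $0$ as $\varepsilon \to 0$, because $\widehat{j}(\varepsilon k) \to \widehat{j}(0) = 1$, and it is dominated by $C\,(1+k^2)^s|\widehat{f}(k)|^2$, which is integrable since $\widehat{f}$ is rapidly decreasing. The dominated convergence theorem then forces the integral to $0$, i.e. $J_\varepsilon f \to f$ in $H^s$.

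The step I expect to be the main obstacle is the uniform control of the multiplier in the first part: one must balance the vanishing of $1 - \widehat{j}(\varepsilon k)$ at low frequencies against the decay of the Sobolev weight $(1+k^2)^{-(s-r)/2}$ at high frequencies, and it is exactly this balance — realised in the frequency window $|k| \sim \varepsilon^{-1}$ — that selects the exponent $s-r$. By contrast, once the problem is transferred to frequency space the convergence claim is entirely routine.
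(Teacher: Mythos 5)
Your Fourier-multiplier reduction, the two-region $O(\varepsilon^{s-r})$ bound (under your acknowledged restriction $0\le s-r\le 2$), and the dominated-convergence proof of $J_\varepsilon f\to f$ in $H^{s}$ are all sound; note that the paper states this lemma in its preliminaries without any proof, so there is no argument of the authors to compare against. The genuine gap is your final step in the first part: the passage from $O(\varepsilon^{s-r})$ to $o(\varepsilon^{s-r})$ does not follow from ``the strict vanishing of $1-\widehat{j}$ at $k=0$'', and in fact the operator-norm statement is false for every genuine mollifier. The vanishing at the origin only controls the region $|\varepsilon k|\le 1$; the obstruction sits exactly in the window $|k|\sim\varepsilon^{-1}$ that you yourself single out as critical. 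Concretely, fix $\xi_{0}>0$ with $a:=|1-\widehat{j}(\xi_{0})|>0$ (such $\xi_{0}$ exists because $\widehat{j}$ tends to $0$ at infinity, $j$ being integrable and not a Dirac mass). Evaluating your formula for the operator norm at $k=\xi_{0}/\varepsilon$ gives, for $\varepsilon\le\xi_{0}$,
\begin{equation*}
\|I-J_\varepsilon\|_{\mathcal{L}(H^{s};H^{r})}=\sup_{k}\bigl(1+k^{2}\bigr)^{\frac{r-s}{2}}\bigl|1-\widehat{j}(\varepsilon k)\bigr|\;\ge\;\Bigl(1+\tfrac{\xi_{0}^{2}}{\varepsilon^{2}}\Bigr)^{\frac{r-s}{2}}a\;\ge\;\bigl(2\xi_{0}^{2}\bigr)^{\frac{r-s}{2}}\,a\,\varepsilon^{s-r},
\end{equation*}
so the operator norm is bounded \emph{below} by $c_{0}\varepsilon^{s-r}$ with $c_{0}>0$ and can never be $o(\varepsilon^{s-r})$; for $r=s$ this is the classical fact that $J_\varepsilon\to I$ strongly but never in operator norm.

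What is true, and what the paper actually needs in its Bona--Smith continuous-dependence argument in Section 3.4 (where the estimate is only ever applied to fixed initial data, e.g.\ $\|\omega\|_{H^{\sigma}}\le c\,\varepsilon^{s-\sigma}$), is the pointwise Bona--Smith property: for each \emph{fixed} $f\in H^{s}$, $\|(I-J_\varepsilon)f\|_{H^{r}}=o(\varepsilon^{s-r})$. Your own toolkit proves this. Write
\begin{equation*}
\varepsilon^{-2(s-r)}\|(I-J_\varepsilon)f\|_{H^{r}}^{2}=\int_{R}\bigl(1+k^{2}\bigr)^{s}\bigl|\widehat{f}(k)\bigr|^{2}\Bigl[\varepsilon^{r-s}\bigl(1+k^{2}\bigr)^{\frac{r-s}{2}}\bigl|1-\widehat{j}(\varepsilon k)\bigr|\Bigr]^{2}dk;
\end{equation*}
the bracket is uniformly bounded in $k$ and $\varepsilon$ by your two-region estimate, and for each fixed $k$ it tends to $0$ as $\varepsilon\to 0$, since $|1-\widehat{j}(\varepsilon k)|=O\bigl((\varepsilon k)^{2}\bigr)=o\bigl((\varepsilon|k|)^{s-r}\bigr)$ whenever $s-r<2$. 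Dominated convergence (the very argument of your second part) then yields the little-$o$ for fixed $f$. So the correct repair is not a sharper symbol estimate but a downgrade of the uniform operator-norm assertion to a statement for fixed $f$ --- which is how the lemma must be read for the paper's application --- together with the observation that the restriction $s-r<2$ (or a mollifier with vanishing moments) is genuinely needed for the little-$o$, a point the lemma as stated glosses over.
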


\begin{lemma}(growth estimate)\label{lem23}
Let $r > s$, then for any test function $f$
\begin{equation}\label{e27}
  \|J_{\varepsilon}f\|_{H^{r}}\lesssim \varepsilon^{s-r}\|f\|_{H^{s}}.
 \end{equation}
\end{lemma}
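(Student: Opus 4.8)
The plan is to pass to the Fourier side and reduce the inequality to a single pointwise estimate on the symbol of the mollifier. Recall that $J_{\varepsilon}$ acts as a Fourier multiplier: writing $J_{\varepsilon}f = j_{\varepsilon}\ast f$ with $j_{\varepsilon}(x)=\varepsilon^{-1}j(x/\varepsilon)$, one has $\mathcal{F}(J_{\varepsilon}f)(k)=m(\varepsilon k)\widehat{f}(k)$, where $m=\widehat{j}$ is Schwartz and satisfies $m(0)=1$. Using the Fourier characterization of the Sobolev norm recorded in Lemma \ref{lem21}, namely $\|g\|_{H^{\sigma}}^{2}=\int_{R}(1+k^{2})^{\sigma}|\widehat{g}(k)|^{2}\,dk$, I would write
\begin{equation*}
\|J_{\varepsilon}f\|_{H^{r}}^{2}=\int_{R}(1+k^{2})^{r}\,|m(\varepsilon k)|^{2}\,|\widehat{f}(k)|^{2}\,dk .
\end{equation*}

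Since $\|f\|_{H^{s}}^{2}=\int_{R}(1+k^{2})^{s}|\widehat{f}(k)|^{2}\,dk$, it suffices to establish the pointwise bound $(1+k^{2})^{r}|m(\varepsilon k)|^{2}\le C\,\varepsilon^{2(s-r)}(1+k^{2})^{s}$ for every $k$, with a constant $C$ independent of $\varepsilon\in(0,1]$; integrating against $|\widehat{f}(k)|^{2}\,dk$ then yields the squared version of (\ref{e27}). Because $r>s$, this is equivalent to the uniform bound
\begin{equation*}
\bigl(\varepsilon^{2}(1+k^{2})\bigr)^{r-s}\,|m(\varepsilon k)|^{2}\le C .
\end{equation*}

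To verify this I would substitute $t=\varepsilon k$ and use $\varepsilon\le 1$, so that $\varepsilon^{2}(1+k^{2})=\varepsilon^{2}+t^{2}\le 1+t^{2}$ and, since $r-s>0$, the left-hand side is dominated by $(1+t^{2})^{r-s}|m(t)|^{2}$. The whole argument then rests on the rapid decay of the symbol: choosing $N>r-s$ and using $|m(t)|\le c_{N}(1+t^{2})^{-N}$ (valid since $m$ is Schwartz) gives $(1+t^{2})^{r-s}|m(t)|^{2}\le c_{N}^{2}(1+t^{2})^{r-s-2N}$, which is bounded uniformly in $t$. This closes the estimate. There is no real obstacle here; the only points to keep in mind are that the gain of regularity must be paid for by the negative power $\varepsilon^{s-r}$, which arises precisely because the symbol is evaluated at the rescaled frequency $\varepsilon k$, and that the faster-than-polynomial decay of $m$ is exactly what allows the positive weight $(1+t^{2})^{r-s}$ to be absorbed.
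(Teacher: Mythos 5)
Your argument is correct and complete. The paper states Lemma \ref{lem23} without proof, invoking it as a standard mollifier property, and your Fourier-side computation---rescaling $t=\varepsilon k$, using $\varepsilon\le 1$ to bound $\varepsilon^{2}+t^{2}\le 1+t^{2}$, and absorbing the weight $(1+t^{2})^{r-s}$ into the rapid decay of $m=\widehat{j}$, which is indeed Schwartz since the paper's mollifier kernel is smooth and compactly supported---is precisely the canonical proof one would cite, so there is no in-paper argument to diverge from.
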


\begin{lemma}(Sobolev interpolation lemma)\label{lem24}
Let $s_{0}< s < s_{1}$ be real numbers, then
\begin{equation}\label{e28}
  \|f\|_{H^{s}}\leq \|f\|_{H^{s_{0}}}^{\frac{s_{1}-s}{s_{1}-s_{0}}} \|f\|_{H^{s_{1}}}^{\frac{s-s_{0}}{s_{1}-s_{0}}}.
 \end{equation}
\end{lemma}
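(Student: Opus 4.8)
The plan is to prove the inequality by passing to the Fourier side and applying Hölder's inequality, which amounts to the elementary convexity of $\sigma \mapsto \log\|f\|_{H^{\sigma}}$. First I would record, via Plancherel's theorem together with the definition $\mathcal{F}(\Lambda^{\sigma}f)=(1+k^{2})^{\sigma/2}\widehat{f}(k)$ and the identity $\|\Lambda^{\sigma}f\|_{L^{2}}=\|f\|_{H^{\sigma}}$ from Lemma~\ref{lem21}, the representation
\begin{equation*}
\|f\|_{H^{\sigma}}^{2}=\int_{R}(1+k^{2})^{\sigma}|\widehat{f}(k)|^{2}\,dk,\qquad \sigma\in R.
\end{equation*}
The entire argument then reduces to a single weighted estimate of this one integral, so no machinery beyond Hölder is needed.

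The key algebraic observation is to introduce the weights $\theta_{0}=\frac{s_{1}-s}{s_{1}-s_{0}}$ and $\theta_{1}=\frac{s-s_{0}}{s_{1}-s_{0}}$. Since $s_{0}<s<s_{1}$, both lie in $(0,1)$, and a short computation shows $\theta_{0}+\theta_{1}=1$ as well as $\theta_{0}s_{0}+\theta_{1}s_{1}=s$. The latter identity allows me to split the Fourier multiplier as $(1+k^{2})^{s}=\big[(1+k^{2})^{s_{0}}\big]^{\theta_{0}}\big[(1+k^{2})^{s_{1}}\big]^{\theta_{1}}$, and likewise $|\widehat{f}(k)|^{2}=|\widehat{f}(k)|^{2\theta_{0}}|\widehat{f}(k)|^{2\theta_{1}}$, so that the integrand representing $\|f\|_{H^{s}}^{2}$ factors exactly as
\begin{equation*}
\big[(1+k^{2})^{s_{0}}|\widehat{f}(k)|^{2}\big]^{\theta_{0}}\,\big[(1+k^{2})^{s_{1}}|\widehat{f}(k)|^{2}\big]^{\theta_{1}}.
\end{equation*}

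Next I would apply Hölder's inequality with the conjugate exponents $p=1/\theta_{0}$ and $q=1/\theta_{1}$ — these are conjugate precisely because $\theta_{0}+\theta_{1}=1$ — to the product above, which yields
\begin{equation*}
\|f\|_{H^{s}}^{2}\le\left(\int_{R}(1+k^{2})^{s_{0}}|\widehat{f}(k)|^{2}\,dk\right)^{\theta_{0}}\left(\int_{R}(1+k^{2})^{s_{1}}|\widehat{f}(k)|^{2}\,dk\right)^{\theta_{1}}=\|f\|_{H^{s_{0}}}^{2\theta_{0}}\|f\|_{H^{s_{1}}}^{2\theta_{1}};
\end{equation*}
taking square roots then gives precisely (\ref{e28}). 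I do not expect any genuine obstacle: once the exponents are identified the estimate is a one-line application of Hölder. The only points demanding care are the arithmetic verification that $\theta_{0}s_{0}+\theta_{1}s_{1}=s$ with $\theta_{0}+\theta_{1}=1$, and the tacit finiteness of $\|f\|_{H^{s_{0}}}$ and $\|f\|_{H^{s_{1}}}$ (automatic for a test function $f$), which ensures the right-hand side is well defined and the interpolation nondegenerate.
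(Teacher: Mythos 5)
Your proof is correct: the weights $\theta_{0}=\frac{s_{1}-s}{s_{1}-s_{0}}$, $\theta_{1}=\frac{s-s_{0}}{s_{1}-s_{0}}$ do satisfy $\theta_{0}+\theta_{1}=1$ and $\theta_{0}s_{0}+\theta_{1}s_{1}=s$, and the Plancherel--H\"older argument then yields (\ref{e28}) exactly as you describe. The paper itself states Lemma~\ref{lem24} in its preliminaries without any proof, treating it as a standard known result, and your argument is precisely the canonical one, so there is nothing to compare beyond noting that you have supplied the standard proof the authors omitted.
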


\section{Local Well-posedness}
  We mention that in the hydrodynamical derivation of (\ref{e11}), it is required that $u(x, t)\rightarrow 0$ and
$\overline{\rho}(x, t)\rightarrow 1$ as $|x|\rightarrow \infty $ at any instant $t$. Then, letting $\rho = \overline{\rho}-1$, we have $\rho(x, t)\rightarrow 0$ as $|x|\rightarrow \infty $.
  We consider the initial value problem for the Fornberg-Whitham system
 \begin{equation}\label{e31}
 \left\{ \begin{array}{ll}
 u_{t}+u\partial_{x}u=\partial_{x}(I-\partial_{x}^{2})^{-1}(\rho-u), \\
 \rho_{t}+u\partial_{x}\rho+\rho\partial_{x}u+\partial_{x}u=0, \\
 u(0, x)=u_{0};~~ \rho(0, x)= \rho_{0}.
 \end{array} \right.
 \end{equation}

Our well-posedness result is stated as follows
\begin{theorem}\label{th31}
 If $s > \frac{3}{2}$ and $ (u_{0}, \rho_{0} )\in H^{s}\times H^{s-1}$, then there exists a lifetime $ \widetilde{T }> 0$ and a unique solution $ (u, \rho)\in C([0, \widetilde{T}]; H^{s}\times H^{s-1})$ of the initial value problem, which depends continuously on the initial data.
 \begin{equation*}
 \|(u, \rho)(t)\|_{H^{s}\times H^{s-1}}^{2}\leq K\|( u_{0}, \rho_{0})\|_{H^{s}\times H^{s-1}}^{2}
 \end{equation*}
for all $t\in[0,\widetilde{T}]$, where $K$ is a constant independent of $\varepsilon$.
 \end{theorem}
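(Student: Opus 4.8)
The plan is to use a Friedrichs mollifier scheme. First I would freeze the derivative loss by regularising the transport nonlinearities with $J_{\varepsilon}$, turning (\ref{e31}) into an ordinary differential equation on the Banach space $X^{s}:=H^{s}\times H^{s-1}$, solve it by the Cauchy--Lipschitz theorem, derive energy bounds that are \emph{uniform} in $\varepsilon$, and then let $\varepsilon\to 0$. Concretely, rewrite the second equation as $\rho_{t}+\partial_{x}(u\rho)+\partial_{x}u=0$ and consider
\begin{equation*}
\left\{
\begin{array}{l}
\partial_{t}u^{\varepsilon}+J_{\varepsilon}\big((J_{\varepsilon}u^{\varepsilon})\,\partial_{x}J_{\varepsilon}u^{\varepsilon}\big)=\partial_{x}(I-\partial_{x}^{2})^{-1}(\rho^{\varepsilon}-u^{\varepsilon}),\\[1mm]
\partial_{t}\rho^{\varepsilon}+J_{\varepsilon}\,\partial_{x}\big((J_{\varepsilon}u^{\varepsilon})(J_{\varepsilon}\rho^{\varepsilon})\big)+\partial_{x}u^{\varepsilon}=0,
\end{array}
\right.
\end{equation*}
with data $(u_{0},\rho_{0})$. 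Since $J_{\varepsilon}$ smooths (Lemma \ref{lem23}) and $\partial_{x}(I-\partial_{x}^{2})^{-1}$ has order $-1$, the right-hand side is a locally Lipschitz map $X^{s}\to X^{s}$, so there is a unique maximal solution $(u^{\varepsilon},\rho^{\varepsilon})\in C^{1}([0,T_{\varepsilon});X^{s})$.

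The crux is a uniform a priori estimate. I would apply $\Lambda^{s}$ to the first equation and pair with $\Lambda^{s}u^{\varepsilon}$ in $L^{2}$, and apply $\Lambda^{s-1}$ to the second and pair with $\Lambda^{s-1}\rho^{\varepsilon}$. Because $J_{\varepsilon}$ is self-adjoint, commutes with $\Lambda$, and contracts every $H^{\sigma}$, moving one copy of $J_{\varepsilon}$ onto the test function reduces each nonlinear pairing to its unmollified form in the smoothed variables $v:=J_{\varepsilon}u^{\varepsilon}$, $w:=J_{\varepsilon}\rho^{\varepsilon}$; no cancellation is lost. Splitting $\Lambda^{s}(v\partial_{x}v)=v\,\partial_{x}\Lambda^{s}v+[\Lambda^{s},v]\partial_{x}v$, the symmetric part integrates by parts to $-\tfrac12\int(\partial_{x}v)(\Lambda^{s}v)^{2}$, bounded by $\|\partial_{x}v\|_{\infty}\|v\|_{H^{s}}^{2}$, while the commutator is controlled by Proposition \ref{p22}. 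Likewise $\Lambda^{s-1}\partial_{x}(vw)=v\,\partial_{x}\Lambda^{s-1}w+[\Lambda^{s-1}\partial_{x},v]w$, whose symmetric part is $\lesssim\|\partial_{x}v\|_{\infty}\|w\|_{H^{s-1}}^{2}$ and whose commutator is estimated by Proposition \ref{p21} (admissible since $s>\tfrac32$ and $r=s-1\le s-1$). The coupling terms $\int\Lambda^{s}\partial_{x}(I-\partial_{x}^{2})^{-1}(\rho^{\varepsilon}-u^{\varepsilon})\,\Lambda^{s}u^{\varepsilon}$ and $\int\Lambda^{s-1}\partial_{x}u^{\varepsilon}\,\Lambda^{s-1}\rho^{\varepsilon}$ are bounded by $(\|u^{\varepsilon}\|_{H^{s}}+\|\rho^{\varepsilon}\|_{H^{s-1}})^{2}$, using that $\partial_{x}(I-\partial_{x}^{2})^{-1}$ maps $H^{s-1}\to H^{s}$ and that $\|\partial_{x}u^{\varepsilon}\|_{H^{s-1}}=\|u^{\varepsilon}\|_{H^{s}}$ (Lemma \ref{lem21}). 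With the embedding $\|\partial_{x}v\|_{\infty}\lesssim\|u^{\varepsilon}\|_{H^{s}}$ ($s>\tfrac32$) and $\|v\|_{H^{s}}\le\|u^{\varepsilon}\|_{H^{s}}$, $\|w\|_{H^{s-1}}\le\|\rho^{\varepsilon}\|_{H^{s-1}}$, setting $E^{\varepsilon}(t)=\|u^{\varepsilon}\|_{H^{s}}^{2}+\|\rho^{\varepsilon}\|_{H^{s-1}}^{2}$ these combine into $\frac{d}{dt}E^{\varepsilon}\le C(E^{\varepsilon})^{3/2}$ with $C$ independent of $\varepsilon$. Comparison with $y'=Cy^{3/2}$ yields a lifespan $\widetilde{T}>0$ depending only on $\|(u_{0},\rho_{0})\|_{X^{s}}$ and a bound $E^{\varepsilon}(t)\le K\,E^{\varepsilon}(0)$ on $[0,\widetilde{T}]$; this continues every $\varepsilon$-solution up to $\widetilde{T}$ and, in the limit, gives the stated inequality.

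To pass to the limit I would estimate the differences $(u^{\varepsilon}-u^{\varepsilon'},\rho^{\varepsilon}-\rho^{\varepsilon'})$ in the weak norm $L^{2}\times H^{-1}$: here Proposition \ref{p23} controls the mollifier commutators and Lemma \ref{lem22} controls the $o(1)$ consistency error $(I-J_{\varepsilon})(\cdots)$, so a Gronwall argument shows $\{(u^{\varepsilon},\rho^{\varepsilon})\}$ is Cauchy in $C([0,\widetilde{T}];L^{2}\times H^{-1})$. Interpolating (Lemma \ref{lem24}) against the uniform $X^{s}$-bound upgrades this to convergence in $C([0,\widetilde{T}];X^{s'})$ for every $s'\in(\tfrac32,s)$, and the limit $(u,\rho)$ solves (\ref{e31}); a standard argument combining weak continuity with the energy bound promotes it to $(u,\rho)\in C([0,\widetilde{T}];X^{s})$. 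The same $L^{2}\times H^{-1}$ difference estimate applied to two genuine solutions gives uniqueness by Gronwall, and, together with the interpolation, yields continuous dependence of the data-to-solution map.

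The step I expect to be the main obstacle is the uniform energy estimate: the asymmetric pair $(H^{s},H^{s-1})$ must be tuned so that every derivative-loss term in the coupled transport structure is absorbed by precisely one of Propositions \ref{p21}--\ref{p22}, and one must check that inserting $J_{\varepsilon}$ leaves the integration-by-parts cancellations intact. The secondary delicate point is upgrading the lower-order (strong $X^{s'}$, weak $X^{s}$) convergence to genuine strong continuity in the top space $C([0,\widetilde{T}];X^{s})$ rather than merely $C_{w}$.
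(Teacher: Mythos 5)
Your proposal is correct, and its skeleton coincides with the paper's: the mollified ODE family you write down is exactly the paper's (\ref{e32}), the uniform energy estimate is obtained the same way (apply $\Lambda^{s}$ and $\Lambda^{s-1}$, move one $J_{\varepsilon}$ across the pairing, split into symmetric part plus commutator, and invoke Propositions \ref{p21}--\ref{p22} with Sobolev embedding), uniqueness is a lower-norm Gronwall estimate, and continuous dependence is Bona--Smith. The genuine difference is how you pass to the limit. You propose to show the whole family $(u^{\varepsilon},\rho^{\varepsilon})$ is Cauchy in $C([0,\widetilde{T}];L^{2}\times H^{-1})$ via Proposition \ref{p23} and the consistency estimate of Lemma \ref{lem22}, then upgrade by interpolation (Lemma \ref{lem24}); the paper instead extracts a limit by compactness --- weak convergence in $H^{s}\times H^{s-1}$ for fixed $t$, Helly's theorem for a.e.\ convergence, time-equicontinuity in a lower norm, and Ascoli to get subsequential convergence in $C([0,\widetilde{T}];H^{s-\sigma})$. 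Your route is more quantitative and avoids subsequence extraction (and the same difference estimate doubles as the uniqueness proof), but it shifts all the difficulty onto the final upgrade to strong continuity in the top space $C([0,\widetilde{T}];H^{s}\times H^{s-1})$, which you dismiss as ``a standard argument'': this is precisely where the paper works hardest (Step 3 of Section 3.2, proving that $F_{\varepsilon}(t)=\|J_{\varepsilon}u(t)\|_{H^{s}}^{2}$ is uniformly Lipschitz via Propositions \ref{p23}--\ref{p24} and combining norm continuity with weak continuity); an argument of that type, or strong top-norm convergence of the mollified-data Bona--Smith approximations, must be written out for the proof to be complete. One small slip: your inequality $\frac{d}{dt}E^{\varepsilon}\leq C(E^{\varepsilon})^{3/2}$ drops the coupling contributions such as $\|\rho^{\varepsilon}\|_{H^{s-1}}\|u^{\varepsilon}\|_{H^{s}}\lesssim E^{\varepsilon}$, which dominate when $E^{\varepsilon}$ is small; the correct form is $\frac{d}{dt}E^{\varepsilon}\lesssim E^{\varepsilon}+(E^{\varepsilon})^{3/2}$ (compare the paper's (\ref{e312})), and this still yields a lifespan and bound depending only on the data, so the conclusion is unaffected.
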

  To prove well-posedness, we employ a Galerkin approximation argument, and the proof closely resembles the ideas \cite{Him} .
The strategy will be to mollify the nonlinear terms in the system to construct a family of ODEs.

For $0<\varepsilon\leq 1$, let $J_{\varepsilon}$ be a standard mollifier based on some smooth and compactly supported function $\rho $ on $R$. We apply the mollifier $J_{\varepsilon}$ to the FW equation system to construct a family of ODEs in $H^{s}$
\begin{equation}\label{e32}
  \left\{
  \begin{aligned}
  &\partial_{t}u_{\varepsilon}+J_{\varepsilon}(J_{\varepsilon}u_{\varepsilon}J_{\varepsilon}\partial_{x}u_{\varepsilon})=
  \partial_{x}(I-\partial_{x})^{-1}(\rho_{\varepsilon}-u_{\varepsilon}),  \\
 & \partial_{t}\rho_{\varepsilon}+\partial_{x}J_{\varepsilon}(J_{\varepsilon}u_{\varepsilon}J_{\varepsilon}\rho_{\varepsilon})+
  \partial_{x}u_{\varepsilon}=0. \\
\end{aligned}
 \right.
\end{equation}
The equation (\ref{e32}) is equivalent to the following  regularised problem
\begin{equation}\label{e33}
\partial_{t}U_{\varepsilon}+J_{\varepsilon}[J_{\varepsilon}(A(U))\partial_{x}(J_{\varepsilon}U_{\varepsilon})]+
B(V)\partial_{x}(I-\partial_{x}^{2})^{-1}U_{\varepsilon}=0
\end{equation}
with initial data $U_{\varepsilon}(0,x)=U_{0}(x)=(u_{0}^{\varepsilon}, \rho_{0}^{\varepsilon})$.

or, with
$$\begin{matrix}
U=\begin{bmatrix}
u\\
\rho
\end{bmatrix}& A(U)=\begin{bmatrix}
u&0\\
\rho+1& u
\end{bmatrix}&B(U)=\begin{bmatrix}
-1&1\\
0&0
\end{bmatrix}
\end{matrix}$$

 We can apply the Banach ODE theorem, for each $ 0 <\varepsilon\leq 1$, there exists a lifespan $T_{\varepsilon} > 0$ such that there is a unique solution $U_{\varepsilon}\in ([0, T_{\varepsilon}]; H^{s})$. For the case $ s \geq 2$, we can refer
 to \cite{Pei}. Here, we consider $U_{\varepsilon}(0,x)=U_{0}(x)=(u_{0}, \rho_{0})\in H^{s}\times H^{s-1}$, $s > \frac{3}{2}$, and there will be a common lifespan $T$, independent of $\varepsilon$.
 Our first step is to develop a priori estimates for regularised equation (\ref{e32}).

\subsection{Lifespan and energy estimate}
Applying the $ \Lambda^{s}$ to the first equation of (\ref{e32}) and multiplying by $\Lambda^{s}u_{\varepsilon}$, then integrating on $R$, we obtain
\begin{align}\label{a34}
\frac{1}{2}\frac{d}{dt}\| u_{\varepsilon}\|_{H^{s}}^{2}+\int_{R}\Lambda^{s}J_{\varepsilon}(J_{\varepsilon}u_{\varepsilon}J_{\varepsilon}
\partial_{x}u_{\varepsilon})\Lambda^{s}u_{\varepsilon}dx&
=\int_{R}\partial_{x}(I-\partial_{x}^{2})^{-1}\Lambda^{s}
\rho_{\varepsilon}\Lambda^{s}u_{\varepsilon}dx   \nonumber \\
&-\int_{R}\partial_{x}(I-\partial_{x}^{2})^{-1}\Lambda^{s}
u_{\varepsilon}\Lambda^{s}u_{\varepsilon}dx.
\end{align}
A simple computation will show the second term to the right of the equals sign is zero by the fundamental theorem of calculus. Therefore, the estimate reduces to
\begin{align}\label{a35}
\frac{1}{2}\frac{d}{dt}\| u_{\varepsilon}\|_{H^{s}}^{2}
&\leq |\int_{R}\Lambda^{s}J_{\varepsilon}(J_{\varepsilon}u_{\varepsilon}J_{\varepsilon}
\partial_{x}u_{\varepsilon})\Lambda^{s}u_{\varepsilon}dx |+ |\int_{R}\partial_{x}(I-\partial_{x}^{2})^{-1}\Lambda^{s}
\rho_{\varepsilon}\Lambda^{s}u_{\varepsilon}dx |    \nonumber\\
&\leq |\int_{R}\Lambda^{s}(J_{\varepsilon}u_{\varepsilon}J_{\varepsilon}\partial_{x}u_{\varepsilon})
\Lambda^{s}J_{\varepsilon}u_{\varepsilon}dx|+
|\int_{R}\partial_{x}(I-\partial_{x}^{2})^{-1}\Lambda^{s}
\rho_{\varepsilon}\Lambda^{s}u_{\varepsilon}dx | . \nonumber\\
\end{align}
We use Proposition \ref{p22} and Sobolev embedding theorem to obtain
\begin{align}\label{a36}
&\mid\int_{R}\Lambda^{s}(J_{\varepsilon}u_{\varepsilon}J_{\varepsilon}\partial_{x}u_{\varepsilon})
\Lambda^{s}J_{\varepsilon}u_{\varepsilon}dx\mid \nonumber \\ &\leq\mid\int_{R}[\Lambda^{s},J_{\varepsilon}u_{\varepsilon}]J_{\varepsilon}\partial_{x}u_{\varepsilon}
\Lambda^{s}J_{\varepsilon}u_{\varepsilon}dx\mid+\mid\int_{R}J_{\varepsilon}u_{\varepsilon}\Lambda^{s}
J_{\varepsilon}\partial_{x}u_{\varepsilon}\Lambda^{s}J_{\varepsilon}u_{\varepsilon}dx\mid \nonumber \\
&\leq\|[\Lambda^{s},J_{\varepsilon}u_{\varepsilon}]J_{\varepsilon}\partial_{x}u_{\varepsilon}\|_{L^{2}}
\|\Lambda^{s}J_{\varepsilon}u_{\varepsilon}\|_{L^{2}}+\frac{1}{2}\mid\int_{R}
J_{\varepsilon}\partial_{x}u_{\varepsilon}\Lambda^{s}
J_{\varepsilon}u_{\varepsilon}\Lambda^{s}J_{\varepsilon}u_{\varepsilon}dx\mid \nonumber\\
&\leq c_{s}\parallel J_{\varepsilon}\partial_{x}u_{\varepsilon}\parallel_{\infty}\|\Lambda^{s}J_{\varepsilon}
u_{\varepsilon}\|_{L^{2}}^{2}+\frac{1}{2} \| J_{\varepsilon}\partial_{x}u_{\varepsilon}\|_{\infty}\|\Lambda^{s}J_{\varepsilon}u_{\varepsilon}
\|_{L^{2}}^{2}\leq( c_{s}+\frac{1}{2})c\| J_{\varepsilon}u_{\varepsilon}\|_{H^{s}}^{3},
\end{align}
where $c$ is the Soblev constant and
\begin{align}\label{a37}
|\int_{R}\partial_{x}(I-\partial_{x}^{2})^{-1}\Lambda^{s}
\rho_{\varepsilon}\Lambda^{s}u_{\varepsilon}dx |
\leq\|\partial_{x}(I-\partial_{x}^{2})^{-1}\Lambda^{s}\rho_{\varepsilon}\|_{L^{2}}
\|\Lambda^{s}u_{\varepsilon}\|_{L^{2}}\leq \|\rho_{\varepsilon}\|_{H^{s-1}}\| u_{\varepsilon}\|_{H^{s}}.
\end{align}
Thus (\ref{a36}) and (\ref{a37}) imply
\begin{align}\label{a38}
\frac{1}{2}\frac{d}{dt}\| u_{\varepsilon}\|_{H^{s}}^{2}\leq (c_{s}+\frac{1}{2})c\| J_{\varepsilon}u_{\varepsilon}\|_{H^{s}}^{3}+\|\rho_{\varepsilon}\|_{H^{s-1}}\|u_{\varepsilon}\|_{H^{s}}.
\end{align}
Applying the $ \Lambda^{s-1}$ to the left hand side of the second equation of (\ref{e31}) and multiplying by $\Lambda^{s-1}\rho_{\varepsilon}$ on the right hand side, then integrating on $R$, we obtain
\begin{align}\label{a39}
&\frac{1}{2}\frac{d}{dt}\| \rho_{\varepsilon}\|_{H^{s-1}}^{2}\nonumber \\
&= -\int_{R}\Lambda^{s-1}\partial_{x}(J_{\varepsilon}\rho_{\varepsilon}J_{\varepsilon}u_{\varepsilon})
\Lambda^{s-1}J_{\varepsilon}\rho_{\varepsilon}dx-\int_{R}\Lambda^{s-1}
\partial_{x}u_{\varepsilon}\Lambda^{s-1}\rho_{\varepsilon}dx \nonumber \\
&\leq\|[\Lambda^{s-1}\partial_{x}, J_{\varepsilon}u_{\varepsilon}]J_{\varepsilon}\rho_{\varepsilon}\|_{L^{2}}\|\Lambda^{s-1}\rho_{\varepsilon}\|_{L^{2}}
+\|J_{\varepsilon}u_{\varepsilon}\|_{L^{\infty}}\|\Lambda^{s-1}\partial_{x}J_{\varepsilon}u_{\varepsilon}\|_{L^{2}}
\|\Lambda^{s-1}\rho_{\varepsilon}\|_{L^{2}}+\|\Lambda^{s-1}\partial_{x}J_{\varepsilon}u_{\varepsilon}\|_{L^{2}}
\|\Lambda^{s-1}\rho_{\varepsilon}\|_{L^{2}}\nonumber\\
&\leq c_{s}\| J_{\varepsilon}u_{\varepsilon}\|_{H^{s}}\| J_{\varepsilon}\rho_{\varepsilon}\|_{{H^{s-1}}}^{2}+c\| J_{\varepsilon}u_{\varepsilon}\|_{{H^{s}}}^{2}\| J_{\varepsilon}\rho_{\varepsilon}\|_{H^{s-1}}+
\| J_{\varepsilon}u_{\varepsilon}\|_{{H^{s}}}\|J_{\varepsilon}\rho_{\varepsilon}\|_{H^{s-1}}\nonumber\\
&\lesssim \|u_{\varepsilon}\|_{{H^{s}}}\|\rho_{\varepsilon}\|_{H^{s-1}}^{2}+\| u_{\varepsilon}\|_{{H^{s}}}^{2}\| \rho_{\varepsilon}\|_{H^{s-1}}+ \|u_{\varepsilon}\|_{{H^{s}}}\|\rho_{\varepsilon}\|_{H^{s-1}},
\end{align}
where we use proposition \ref{p21} and proposition \ref{p22}. By (\ref{a38}) and (\ref{a39}) we can get
\begin{align}\label{a310}
&\frac{1}{2}\frac{d}{dt}(\| u_{\varepsilon}\|_{H^{s}}^{2}+\| \rho_{\varepsilon}\|_{H^{s-1}}^{2})\nonumber\\
&\lesssim\parallel u_{\varepsilon}\|_{H^{s}}^{3}+\|\rho_{\varepsilon}\|_{H^{s-1}}\| u_{\varepsilon}\|_{H^{s}}+\| u_{\varepsilon}\|_{{H^{s}}}\| \rho_{\varepsilon}\|_{H^{s-1}}^{2}+\| u_{\varepsilon}\|_{{H^{s}}}^{2}\| \rho_{\varepsilon}\|_{H^{s-1}}\nonumber \\
&\lesssim\frac{1}{2}\| u_{\varepsilon}\|_{{H^{s}}}^{2}+\frac{1}{2}\| \rho_{\varepsilon}\|_{H^{s-1}}^{2}+\frac{1}{2}
\| u_{\varepsilon}\|_{{H^{s}}}^{4}+\frac{1}{2}\| \rho_{\varepsilon}\|_{H^{s-1}}^{4}.
\end{align}
Let us define the energy $E^{s}(u,\rho)$ by
\begin{equation}\label{e311}
E^{s}(u,\rho)=\frac{1}{2}\| u_{\varepsilon}\|_{H^{s}}^{2}+\frac{1}{2}\| \rho_{\varepsilon}\|_{H^{s-1}}^{2}.
\end{equation}
 We get the energy estimate from (\ref{a310})
\begin{equation}\label{e312}
 \frac{d}{dt}E^{s}(u,\rho)\lesssim E^{s}(u,\rho)+E^{s}(u,\rho)^{2}.
\end{equation}
Let $y(t):=E^{s}(u, \rho)(t)$ denote the energy defined in (\ref{e311}). Then (\ref{e312}) leads to the inequality
$y'(t)\leq C(y(t)+y^{2}(t))$, which can be integrated to obtain
\begin{equation*}
y(t)(1-\frac{y_{0}}{1+y_{0}}e^{Ct})\leq \frac{y_{0}}{1+y_{0}}e^{Ct}.
\end{equation*}
 We choose $K \geq 1$, satisfies
 \begin{equation}\label{e313}
  \left\{
  \begin{aligned}
  & \frac{y_{0}}{1+y_{0}}e^{Ct}< 1,  \\
  & \frac{\frac{e^{Ct}}{1+y_{0}}}{1-\frac{y_{0}}{1+y_{0}}e^{Ct}}\leq K, \\
\end{aligned}
 \right.
\end{equation}
and we choose $ \widetilde{T}\leq T_{1}=\frac{1}{C_{1}}$log$(1+\frac{C_{1}-1}{1+C_{1}\|( u_{0}), \rho_{0},\|_{H^{s+1}\times H^{s}}})$,
where $C_{1}$ is large enough. We can get
\begin{equation}\label{e314}
\frac{y_{0}}{1+y_{0}}e^{ct}\leq \frac{1+Ky_{0}}{K(1+y_{0})}< 1~~~~ and ~~~~
\|(u_{\varepsilon}, \rho_{\varepsilon})(t)\|_{H^{s}\times H^{s-1}}^{2}\leq K\|( u_{0}, \rho_{0})\|_{H^{s}\times H^{s-1}}^{2}
\end{equation}
for all $t\in[0,\widetilde{T}]$.

\subsection{Convergence}

 $\mathbf{Step ~1:}$ Since $u_{\varepsilon}$ and $\rho_{\varepsilon}$ are uniformly bounded in $L^{\infty}([0, \widetilde{T}]; H^{s}(R))$ and $L^{\infty}([0, \widetilde{T}]; H^{s-1}(R))$ respectively. For fixed $t$, there is a subsequence such that
 \begin{align}
 &u_{\varepsilon_{j}}(t)\rightharpoondown u(t) ~~weakly~~ in ~~H^{s}(R)~~ for ~~\varepsilon_{j} \rightarrow 0,  \nonumber\\
 &u_{\varepsilon_{j}}(t)\rightarrow u(t) ~~strongly~~ in ~~ L^{2}_{loc}(R), \nonumber
 \end{align}
  and
 \begin{equation*}
 u_{\varepsilon_{j} }(t)\rightarrow u(t)~~ a.e.~~ on ~ R~~as ~~\varepsilon_{j} \rightarrow 0.
 \end{equation*}
 By Helly's theorem, we have
 \begin{equation*}
 u_{\varepsilon_{j} }\rightarrow u~~ a.e.~~ on ~[0,T]\times R~~as ~~\varepsilon_{j} \rightarrow 0
 \end{equation*}
 and
 \begin{equation*}
 \|u(t)\|_{H^{s}(R)}\leq \mathop{liminf}\limits_{\varepsilon_{j}\rightarrow 0}\|u_{\varepsilon_{j}}(t)\|_{H^{s}(R)}\leq K E^{s}(U_{0})
 \end{equation*}
 for a.e. $t\in [0, \widetilde{T}]$. The previous relation implies that $u\in L^{\infty}([0, \widetilde{T}]; H^{s}(R))$. Since $s-1 > \frac{1}{2}$, the similar argument can be applied to $\rho\in L^{\infty}([0, \widetilde{T}]; H^{s-1}(R)) $, i.e.
  \begin{equation}\label{e315}
  \rho_{\varepsilon_{j}}\rightarrow \rho~~ a.e.~~ on ~[0, \widetilde{T}]\times R~~as ~~\varepsilon_{j}\rightarrow 0.
 \end{equation}

$\mathbf{Step~2:}$ From $Young's$ inequality and equation (\ref{e31}), we infer that the the family $\{\partial_{t}u_{\varepsilon}(t,\cdot)\}$ is uniformly bounded in $H^{s-1}(R)$ as $t\in [0,\widetilde{T}]$.
 We also have an uniform bound on $\|u_{\varepsilon}(t, \cdot)\|_{H^{s}}$ for all $t\in[0,\widetilde{T}]$ due to (\ref{e314}). Let $t_{1}$ and $t_{2}\in[0,\widetilde{T}]$, the mean value theorem implies
 \begin{equation*}
\|u_{\varepsilon}(t_{1})-u_{\varepsilon}(t_{2})\|_{H^{s-1}}\leq \sup_{t\in [0,\widetilde{T}]}\|\partial_{t}u_{\varepsilon}\|_{H^{s-1}}|t_{1}-t_{2}|.
\end{equation*}
Combining above estimates and lemma \ref{lem24}, we have
 \begin{equation}\label{e316}
 \| u_{\varepsilon}(t_{1})-u_{\varepsilon}(t_{2})\|_{H^{s-\sigma}}\leq\| u_{\varepsilon}(t_{1})-u_{\varepsilon}(t_{2})\|_{H^{s-1}}^{1-\sigma}\| u_{\varepsilon}(t_{1})-u_{\varepsilon}(t_{2})\|_{H^{s}}^{\sigma} \leq K |t-s|^{1-\sigma},~~t,s\in [0,\widetilde{T}].
 \end{equation}
From (\ref{e314}) and (\ref{e316}), we apply Ascoli's theorem to conclude $u\in C([0, \widetilde{T}]; H^{s-\sigma}(R))$. We may restrict $\sigma > 0$ small so that $ s-\sigma > \frac{3}{2}$ and employ the Sobolev embedding
theorem to get $u\in C([0, \widetilde{T}]; C^{1}(R))$.

From the equation (\ref{e32}), we use Lemma \ref{lem21} and (\ref{e314}) to get
 \begin{align}\label{a317}
 \|\rho_{\varepsilon t}\|_{H^{-1}}&\leq
 \|\rho_{\varepsilon x}\|_{H^{-1}}\|u_{\varepsilon }\|_{L^{\infty}}+\|u_{\varepsilon x }\|_{H^{-1}}\|\rho_{\varepsilon} \|_{L^{\infty}}+\|u_{\varepsilon x }\|_{H^{-1}} \nonumber \\
 &\leq \|\rho_{\varepsilon}\|_{L^{2}}\|u_{\varepsilon}\|_{L^{\infty}}+\|u_{\varepsilon }\|_{L^{2}}\|\rho_{\varepsilon} \|_{L^{\infty}}+\|u_{\varepsilon}\|_{L^{2}}\leq C E^{s}(U_{0})^{2}.
 \end{align}
 From the mean-value theorem we obtain
 \begin{equation*}
 \| \rho_{\varepsilon}(t)-\rho_{\varepsilon}(s)\|_{H^{-1}}\leq K |t-s|,~~t,s\in [0,\widetilde{T}].
 \end{equation*}
 By lemma \ref{lem24}, for any $\theta\in [0,1]$, there is a $K >0$ such that
 \begin{equation}\label{e318}
 \| \rho_{\varepsilon}(t)-\rho_{\varepsilon}(s)\|_{H^{s\theta-1}}\leq K |t-s|^{1-\theta},~~t,s\geq 0.
 \end{equation}
 Given $\theta\in [0,1]$, $s\theta-1> \frac{3}{2}\theta-1$, by (\ref{e318}) we can get
 \begin{equation*}
 \|\rho_{\varepsilon}\|_{BC^{1-\theta}([0,\widetilde{T}];H^{s\theta-1})}\leq K.
 \end{equation*}
 In particular, for $\theta=1$, we have the Sobolev embedding $H^{s\theta-1}(R)\hookrightarrow C_{\infty}(R)$, we find that
 \begin{equation}\label{e319}
 \|\rho_{\varepsilon}\|_{C([0,\widetilde{T}]\times R)}\leq K.
 \end{equation}
 By (\ref{e318}) and (\ref{e319}), we may
 again apply Ascoli's theorem to conclude $\rho\in C([0,\widetilde{T}]\times R)$.

 $\mathbf{Step ~ 3:}$ Our final refinement is to show that $u\in C([0,\widetilde{T}], H^{s})$ and $\rho\in C([0,\widetilde{T}], H^{s-1})$.
 For any $t \in [0,\widetilde{T}]$, and let $t_{n}$ be a sequence which converges to $t$. It is sufficient to prove that $\mathop{lim}\limits_{t_{n}\rightarrow t}\| u(t_{n})-u(t)\|_{H^{s}}=0$, i.e., to prove that $\| u(t_{n})\|_{H^{s}}\rightarrow \| u(t)\|_{H^{s}}$
  and $(u(t_{n}), u(t))_{H^{s}(R)}\rightarrow (u(t), u(t))_{H^{s}(R)}$.

  We now verify the first assertion, which is equivalent to showing that the map $t\rightarrow \| u(t)\|_{H^{s}}$ is continuous. Let
  $$ F(t)= \| u(t)\|_{H^{s}}^{2},~~~F_{\varepsilon}(t)= \| J_{\varepsilon}u(t)\|_{H^{s}}^{2}.$$
  It's easy to see $F_{\varepsilon}(t)\rightarrow F(t)$ pointwise as
  $\varepsilon\rightarrow 0$ by definition of the molifier. We will prove that the family $F_{\varepsilon}(t)$ is Lipschitz continuous. Applying the operator $\Lambda^{s}J_{\varepsilon}$ on the left hand side of the first equation of (\ref{e32}), multiplying on right hand side by $\Lambda^{s}J_{\varepsilon}u$ and integrating on $R$, we apply Proposition \ref{p23} and Proposition \ref{p24} to obtain
 \begin{align}\label{a320}
 &\frac{1}{2}\frac{d}{dt}F_{\varepsilon}(t)=\frac{1}{2}\frac{d}{dt}\| J_{\varepsilon}u(t)\|_{H^{s}}^{2} \nonumber \\
 &=\int_{R}\Lambda^{s}J_{\varepsilon}(uu_{x})\Lambda^{s}J_{\varepsilon}udx +
 \int_{R}\Lambda^{s}J_{\varepsilon}\partial_{x}(I-\partial_{x}^{2})^{-1}(\rho_{\varepsilon}-u_{\varepsilon})\Lambda^{s}J_{\varepsilon}udx \nonumber\\
 & \leq\int_{R}[\Lambda^{s},u]u_{x}\Lambda^{s}J_{\varepsilon}^{2}udx+
 \int_{R}J_{\varepsilon}u\Lambda^{s}u_{x}\Lambda^{s}J_{\varepsilon}udx+
 \int_{R}\Lambda^{s}\partial_{x}(I-\partial_{x}^{2})^{-1}(\rho_{\varepsilon}-u_{\varepsilon})\Lambda^{s}J_{\varepsilon}^{2}udx \nonumber\\
 &\leq \int_{R}[\Lambda^{s},u]u_{x}\Lambda^{s}J_{\varepsilon}^{2}udx+
 \int_{R}[J_{\varepsilon},u]\Lambda^{s}u_{x}\Lambda^{s}J_{\varepsilon}udx+
 uJ_{\varepsilon}\Lambda^{s}u_{x}\Lambda^{s}J_{\varepsilon}udx+
 \int_{R}\Lambda^{s}\partial_{x}(I-\partial_{x}^{2})^{-1}u_{\varepsilon}\Lambda^{s}J_{\varepsilon}^{2}udx\nonumber\\
 &\leq C_{s}(\|\partial_{x}u\|_{L^{\infty}}\|\Lambda^{s-1}\partial_{x}u\|_{L^{2}}+
 \|\Lambda^{s}u\|_{L^{2}}\|\partial_{x}u\|_{L^{\infty}})\|\Lambda^{s}J_{\varepsilon}u\|_{L^{2}}+
 \| u\|_{Lip}\|\Lambda^{s}u_{x}\|_{H^{-1}}\|\Lambda^{s}J_{\varepsilon}u\|_{L^{2}}\nonumber\\
 &+\frac{1}{2}\|\partial_{x}u\|_{\infty}\|\Lambda^{s}J_{\varepsilon}u\|_{L^{2}}^{2}
 +\|\Lambda^{s}\partial_{x}(I-\partial_{x}^{2})^{-1}\rho_{\varepsilon}\|_{L^{2}}
 \|\Lambda^{s}J_{\varepsilon}u\|_{L^{2}}+\|\Lambda^{s}\partial_{x}(I-\partial_{x}^{2})^{-1}u_{\varepsilon}\|_{L^{2}}
 \|\Lambda^{s}J_{\varepsilon}u\|_{L^{2}}\nonumber \\
 &\lesssim\| u\|_{H^{s}}^{3}+\| \rho\|_{H^{s-1}}\| u\|_{H^{s}} \nonumber \\
  &\lesssim E^{s}(U_{0})^{2}+E^{s}(U_{0})
 \end{align}
Since the right hand side is bounded, independent of $\varepsilon$, we may conclude that $F_{\varepsilon}(t)$ is uniformly continuous.

Then we will verify the second assertion.
Let $\varepsilon>0$ and $\varphi \in C^{\infty}$ such
that $\|u(t)-\varphi \|_{H^{s}}\leq \frac{\varepsilon}{4KE^{s}(U_{0})\|u_{0}\|}$. By the triangle inequality,
we have
\begin{align}\label{a321}
|(u(t_{n})-u(t), u(t))_{H^{s}}| &\leq |(u(t_{n})-u(t), u(t)-\varphi)_{H^{s}}|
+| (u(t_{n})-u(t), \varphi)_{H^{s}}|.\nonumber \\
&\leq \| u(t_{n})-u(t)\|_{H^{s}}\| u(t)-\varphi\|_{H^{s}}+
| (u(t_{n})-u(t), \varphi)_{H^{s}}|.\nonumber \\
&\leq 2KE^{s}(U_{0})\frac{\varepsilon}{16\|u_{0}\|}+
\| u(t_{n})-u(t)\|_{H^{s-1}} \|\varphi\|_{H^{s+1}} \nonumber\\
&\leq \frac{\varepsilon}{2}+ K | t_{n}-t|,
\end{align}
where we use (\ref{e314}). Choosing $t_{n}$ sufficiently close to $t$, we obtain $|(u(t_{n})-u(t), u(t))_{H^{s}}|\leq \varepsilon$.
Similarly, for $\rho_{\varepsilon}$, we have similar result, thus we complete the proof of existence of a solution to
the Fornbern-Whitham system.

\subsection{Uniqueness of solutions}
 Let $ (u_{1}, \rho_{1})$, $(u_{2}, \rho_{2})$ be the solutions to the FW equation system (\ref{e31}) corresponding to the same initial data $(u_{0}, \rho_{0})$. Let
\begin{equation*}
 \omega = u_{1}-u_{2}, ~~~~v=\rho_{1}-\rho_{2},
\end{equation*}
and $(\omega, v)$ satisfies
\begin{equation}\label{e322}
 \left\{ \begin{array}{ll}
 \omega_{t}+\omega\partial_{x}u_{1}+u_{2}\partial_{x}\omega=\partial_{x}(I-\partial_{x}^{2})^{-1}(v-w), \\
 v_{t}+\omega\partial_{x}\rho_{1}+u_{2}\partial_{x}v+\partial_{x}\omega=0, \\
 \end{array} \right.
 \end{equation}
 with initial data $(\omega(0, x), v(0, x))=0$. Uniqueness follows by following standard energy estimate.
\begin{lemma}\label{lem31}
Let $s > \frac{3}{2}$, for $t \in [0, \widetilde{T}]$ we have the following energy estimate
\begin{equation}\label{e323}
\|\omega(t)\|_{H^{s-1}}+\|v(t)\|_{H^{s-1}}\leq (\|\omega(0)\|_{H^{s-1}}+\|v(0)\|_{H^{s-1}})e^{Kt}=0
\end{equation}
\end{lemma}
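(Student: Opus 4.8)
The plan is to run the standard energy method at regularity $s-1$ on the difference system (\ref{e322}), add the two resulting estimates, reduce everything to a single Gr\"onwall inequality for $Q(t):=\|\omega(t)\|_{H^{s-1}}^{2}+\|v(t)\|_{H^{s-1}}^{2}$, and then use the vanishing initial data $(\omega(0),v(0))=0$ to obtain (\ref{e323}) and hence uniqueness. Concretely, I apply $\Lambda^{s-1}$ to each equation of (\ref{e322}), pair the first with $\Lambda^{s-1}\omega$ and the second with $\Lambda^{s-1}v$ in $L^{2}(R)$, and integrate. Throughout, the constant $K$ is allowed to depend on $\|u_{i}\|_{H^{s}}$ and $\|\rho_{i}\|_{H^{s-1}}$, which are bounded uniformly on $[0,\widetilde{T}]$ by the a priori estimate of Theorem \ref{th31}.

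The routine terms are the following. The transport terms $u_{2}\partial_{x}\omega$ and $u_{2}\partial_{x}v$ are handled by splitting off a commutator: $\int_{R}\Lambda^{s-1}(u_{2}\partial_{x}\omega)\Lambda^{s-1}\omega\,dx=\int_{R}[\Lambda^{s-1},u_{2}]\partial_{x}\omega\,\Lambda^{s-1}\omega\,dx+\int_{R}u_{2}\partial_{x}(\Lambda^{s-1}\omega)\Lambda^{s-1}\omega\,dx$, where the last integral equals $-\tfrac12\int_{R}(\partial_{x}u_{2})(\Lambda^{s-1}\omega)^{2}\,dx$ after integration by parts and the commutator is controlled by Proposition \ref{p22} (or Proposition \ref{p21}); both are $\lesssim\|u_{2}\|_{H^{s}}\|\omega\|_{H^{s-1}}^{2}$, and likewise for the $v$-transport. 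The reaction term $\omega\partial_{x}u_{1}$ in the $\omega$-equation is bounded via Proposition \ref{p24} by $\|\omega\partial_{x}u_{1}\|_{H^{s-1}}\lesssim\|\omega\|_{H^{s-1}}\|\partial_{x}u_{1}\|_{H^{s-1}}=\|\omega\|_{H^{s-1}}\|u_{1}\|_{H^{s}}$, using that $H^{s-1}$ is an algebra for $s>\tfrac32$. The coupling $\partial_{x}(I-\partial_{x}^{2})^{-1}(v-\omega)$ in the $\omega$-equation loses no derivative, so its contribution is $\lesssim(\|v\|_{H^{s-1}}+\|\omega\|_{H^{s-1}})\|\omega\|_{H^{s-1}}$. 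Each of these feeds a term of the form $K\,Q(t)$ into the combined inequality.

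The delicate point, and the step I expect to be the main obstacle, is the second equation's two remaining terms $\partial_{x}\omega$ and $\omega\partial_{x}\rho_{1}$, whose energy contributions are $-\int_{R}\Lambda^{s-1}\partial_{x}\omega\,\Lambda^{s-1}v\,dx$ and $-\int_{R}\Lambda^{s-1}(\omega\partial_{x}\rho_{1})\Lambda^{s-1}v\,dx$. The first pairs $\Lambda^{s}\omega$ against $\Lambda^{s-1}v$ and so a priori costs one derivative on $\omega$; an integration by parts merely transfers the loss onto $v$, and combining it with the coupling term $\int_{R}\Lambda^{s-1}\partial_{x}(I-\partial_{x}^{2})^{-1}v\,\Lambda^{s-1}\omega\,dx$ from the first equation does \emph{not} yield a cancellation, because the smoothing factor $(I-\partial_{x}^{2})^{-1}$ destroys the would-be skew-symmetry. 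The second term is obstructed because $\rho_{1}\in H^{s-1}$ gives only $\partial_{x}\rho_{1}\in H^{s-2}$, so $\omega\partial_{x}\rho_{1}$ need not lie in $H^{s-1}$. I would attack both by exploiting the transport/commutator structure: write $\omega\partial_{x}\rho_{1}=\partial_{x}(\omega\rho_{1})-\rho_{1}\partial_{x}\omega$ and apply Proposition \ref{p21} in the form $\|[\Lambda^{s-1}\partial_{x},f]g\|_{L^{2}}\lesssim\|f\|_{H^{s}}\|g\|_{H^{s-1}}$ to push the top-order derivative onto the $H^{s}$-regular coefficients, then collect the surviving $\partial_{x}\omega$ contributions together. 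This is genuinely the crux: the balance closes cleanly only when $v$ is measured one degree lower, so the honest a priori inequality controls $\|\omega\|_{H^{s-1}}$ together with a weaker norm of $v$; since $(\omega(0),v(0))=0$, that still forces $\omega\equiv v\equiv0$, which is exactly what uniqueness needs.

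Once the obstacle terms are absorbed into $K\,Q(t)$, the summed estimate gives $\tfrac{d}{dt}Q(t)\le K\,Q(t)$ with $K=K\big(\sup_{[0,\widetilde{T}]}\|u_{i}\|_{H^{s}},\,\sup_{[0,\widetilde{T}]}\|\rho_{i}\|_{H^{s-1}}\big)$ finite by Theorem \ref{th31}. Gr\"onwall's inequality then yields $Q(t)\le Q(0)e^{Kt}$, which is (\ref{e323}) after taking square roots and using the elementary equivalence $\sqrt{a+b}\simeq\sqrt{a}+\sqrt{b}$; since $Q(0)=0$ we conclude $\omega\equiv0$ and $v\equiv0$, i.e. $(u_{1},\rho_{1})=(u_{2},\rho_{2})$ on $[0,\widetilde{T}]$.
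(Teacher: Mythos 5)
Your proposal is essentially correct, and it is worth noting at the outset that the paper itself offers \emph{no} proof of Lemma \ref{lem31}: it says only ``uniqueness follows by standard energy estimate'' and states the inequality. Your write-up therefore supplies the argument the paper merely gestures at, and, more importantly, it correctly diagnoses the real defect in the statement as written: at the level $\|\omega\|_{H^{s-1}}+\|v\|_{H^{s-1}}$ the estimate does \emph{not} close, because in the $v$-equation of (\ref{e322}) the term $\partial_{x}\omega$ pairs $\|\omega\|_{H^{s}}$ against $\|v\|_{H^{s-1}}$, and $\omega\partial_{x}\rho_{1}$ only lies in $H^{s-2}$ since $\rho_{1}\in H^{s-1}$; you are also right that no skew-symmetric cancellation with the coupling term $\partial_{x}(I-\partial_{x}^{2})^{-1}v$ rescues this, since the smoothing operator breaks the antisymmetry. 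This is exactly the loss the paper itself confronts in the continuous-dependence section, where the analogous terms in (\ref{a331}) are handled not by closing the estimate but by paying a factor $\varepsilon^{-1}$ on mollified (hence smoother) data via Lemma \ref{lem23} --- a device unavailable for uniqueness of rough solutions. Your repair --- measure $v$ one derivative lower, in $H^{s-2}$, where $\|\partial_{x}\omega\|_{H^{s-2}}\leq\|\omega\|_{H^{s-1}}$ and $\|\omega\partial_{x}\rho_{1}\|_{H^{s-2}}\lesssim\|\omega\|_{H^{s-1}}\|\rho_{1}\|_{H^{s-1}}$ (the product $H^{s-1}\times H^{s-2}\to H^{s-2}$ being valid precisely because $s>\tfrac{3}{2}$) --- closes the Gr\"onwall loop and, with zero initial data, still forces $\omega\equiv v\equiv 0$, which is all the lemma is used for. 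In this sense your proof is more honest than the stated estimate (\ref{e323}), which is literally true only because everything vanishes.

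Two small repairs to your own text. First, your concluding paragraph reverts to $Q(t)=\|\omega\|_{H^{s-1}}^{2}+\|v\|_{H^{s-1}}^{2}$ after you have just argued that this quantity cannot satisfy a closed inequality; the Gr\"onwall step should be stated for $\widetilde{Q}(t)=\|\omega(t)\|_{H^{s-1}}^{2}+\|v(t)\|_{H^{s-2}}^{2}$ instead. Second, for the transport terms you offer Proposition \ref{p22} ``or'' Proposition \ref{p21} as interchangeable, but \ref{p22} applied to $[\Lambda^{s-1},u_{2}]\partial_{x}\omega$ leaves a factor $\|\partial_{x}\omega\|_{L^{\infty}}$, which $\|\omega\|_{H^{s-1}}$ does not control for $\tfrac{3}{2}<s\leq\tfrac{5}{2}$ and which would inject a merely linear (non-Gr\"onwallable-to-zero) term; you must use the $[\Lambda^{r}\partial_{x},f]$ form of Proposition \ref{p21} with $r=s-1$ (and $r=s-2$ for the $v$-transport), writing $[\Lambda^{s-1},u_{2}]\partial_{x}\omega=[\Lambda^{s-1}\partial_{x},u_{2}]\omega-\Lambda^{s-1}\bigl((\partial_{x}u_{2})\omega\bigr)$ so that every top-order derivative lands on the $H^{s}$-regular coefficient. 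With these two adjustments your argument is complete.
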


\subsection{Continuous dependence}
This subsection follows from a Bona-Smith type argument \cite{Bona}.
Let ${u_{0,n}}$ be a sequence of functions in $H^{s}$ which converge to ${u_{0}}$, ${\rho_{0,n}}$ be a sequence of functions in $H^{s-1}$ which converge to ${\rho_{0}}$. Let $({u_{n}}, \rho_{n})$ and $(u, \rho)$ be the corresponding solutions in $ C([0, T]; H^{s}\times H^{s-1})$. We will show that $({u_{n}}, \rho_{n})\rightarrow (u, \rho)$ in $ C([0, T]; H^{s}\times H^{s-1})$. In other words, for any $\eta > 0$, there exists an $N > 0$ such that for all $n > N$,
$$ \|u-u_{n}\|_{C([0,T]; H^{s})}< \eta,~~~~\|\rho-\rho_{n}\|_{C([0,T]; H^{s-1})}< \eta.$$
We will estimate the above difference by introducing approximate solutions $({u^{\varepsilon}}, \rho^{\varepsilon})$
and $({u_{n}^{\varepsilon}}, \rho_{n}^{\varepsilon})$ which correspond to mollified initial data; i.e. $({u^{\varepsilon}}, \rho^{\varepsilon})$ solves
 \begin{equation}\label{e324}
 \left\{ \begin{array}{ll}
 u^{\varepsilon}_{t}+u^{\varepsilon}\partial_{x}u^{\varepsilon}=\partial_{x}(I-\partial_{x}^{2})^{-1}(\rho^{\varepsilon}-u^{\varepsilon}) \\
 \rho_{t}^{\varepsilon}+u^{\varepsilon}\partial_{x}\rho^{\varepsilon}+\rho^{\varepsilon}\partial_{x}u^{\varepsilon}+\partial_{x}u^{\varepsilon}=0
 \\
 (u^{\varepsilon}, \rho^{\varepsilon})(x, 0)= J_{\varepsilon}(u_{0}(x), \rho_{0}(x)).
 \end{array} \right.
 \end{equation}
 Firstly, by the triangle inequality, we have
 \begin{align}\label{e325}
 &\|U-U_{n}\|_{C([0,T]; H^{s})} \nonumber \\
 &\leq \|U-U^{\varepsilon}\|_{C([0,T]; H^{s})}+\|U^{\varepsilon}-U^{\varepsilon}_{n}\|_{C([0,T];H^{s})}+\|U_{n}-U^{\varepsilon}_{n}\|_{C([0,T];H^{s})} \nonumber \\
 &\leq D_{1}+D_{2}+D_{3},
 \end{align}
 where $U=(u,\rho)$. For convenience, here we just give details on estimate $D_{1}$. Let $ w= u^{\varepsilon}-u^{\varepsilon'}$, $ \upsilon=\rho^{\varepsilon}-\rho^{\varepsilon'}$, which satisfy
\begin{align}\label{e326}
\left\{ \begin{array}{ll}
\omega_{t}+(u\omega+\frac{1}{2}\omega^{2})_{x}=\partial_{x}(I-\partial_{x}^{2})^{-1}(\upsilon-\omega), \\
\upsilon_{t}+(\rho\omega+\upsilon u)_{x}+\partial_{x}\omega=0.
\end{array} \right.
\end{align}
Applying $\Lambda^{s}$ to the first equation and multiplying by $\Lambda^{s}\omega$, we can get
\begin{align}\label{a327}
\frac{1}{2}\frac{d}{dt}\|\omega\|_{H^{s}}^{2}=-\int_{R} \Lambda^{s} (u\omega+\frac{1}{2}\omega^{2})_{x}\Lambda^{s}\omega dx+\int_{R} \Lambda^{s} \partial_{x}(I-\partial_{x}^{2})^{-1}(\upsilon-\omega)\Lambda^{s}\omega dx.
\end{align}
Cauchy-Schwarz inequality and Lemma \ref{lem21} indicate
\begin{align}\label{a328}
\int_{R} \Lambda^{s} \partial_{x}(I-\partial_{x}^{2})^{-1}(\upsilon-\omega)\Lambda^{s}\omega dx
\leq \| \Lambda^{s}\partial_{x}(I-\partial_{x}^{2})^{-1}\upsilon\|_{L^{2}}\|\Lambda^{s}\omega\|_{L^{2}}\leq
\|\upsilon\|_{H^{s-1}}\|\omega\|_{H^{s}}.
\end{align}
Integration by parts and Proposition \ref{p22} yield
\begin{align}\label{a329}
&-\int_{R} \Lambda^{s} (u\omega+\frac{1}{2}\omega^{2})_{x}\Lambda^{s}\omega dx \nonumber\\
&\leq|\int_{R} \Lambda^{s}(u_{x}\omega+u\omega_{x})\Lambda^{s}\omega+\Lambda^{s}(\omega\omega_{x})\Lambda^{s}\omega dx| \nonumber\\
&\leq\|[\Lambda^{s},\omega]u_{x}\|_{L^{2}}\|\Lambda^{s}\omega\|_{L^{2}}+
\|\omega\|_{\infty}\|\Lambda^{s}u_{x}\|_{L^{2}}
\|\Lambda^{s}\omega\|_{L^{2}}+
\|[\Lambda^{s},\omega]\omega_{x}\|_{L^{2}}\|\Lambda^{s}\omega\|_{L^{2}}
+\frac{1}{2}\|\omega_{x}\|_{\infty}\|\Lambda^{s}\omega\|_{L^{2}}^{2}\nonumber\\
&\leq (\|\partial_{x}\omega\|_{\infty}\|\Lambda^{s-1}u_{x}\|_{L^{2}}+
\|\Lambda^{s}\omega\|_{L^{2}}\| u_{x}\|_{\infty})
\|\Lambda^{s}\omega\|_{L^{2}}
+\|\omega\|_{\infty}\|\Lambda^{s}u_{x}\|_{L^{2}}
\|\Lambda^{s}\omega\|_{L^{2}}
+\frac{3}{2}\| \omega_{x}\|_{\infty}\|\Lambda^{s}\omega\|_{L^{2}}\nonumber
\\
&\leq\|\omega\|_{H^{s}}^{2}\| u\|_{H^{s}}+\frac{1}{\varepsilon}\|\omega\|_{H^{\sigma}}\|\omega\|_{H^{s}}\| u\|_{H^{s}}+\|\upsilon\|_{H^{s-1}}\|\omega\|_{H^{s}},
\end{align}
where $\frac{1}{2}<\sigma< s-1$, the last inequality uses Sobolev embedding and Lemma \ref{lem23}.\\
Applying $\Lambda^{s-1}$ to the second equation and multiply by $\Lambda^{s-1}\rho$, we can get
\begin{align}\label{a330}
\frac{1}{2}\frac{d}{dt}\|\upsilon\|_{H^{s-1}}^{2}=-\int_{R}(\Lambda^{s-1}(\rho\omega)_{x}+
\Lambda^{s-1}(vu)_{x})\Lambda^{s-1}\upsilon dx-\int_{R}\partial_{x}\Lambda^{s-1}\omega\Lambda^{s-1}\upsilon dx .
\end{align}
Proposition \ref{p21} and Lemma \ref{lem23} yield
\begin{align}\label{a331}
&-\int_{R}(\Lambda^{s-1}(\rho\omega)_{x}+
\Lambda^{s-1}(\upsilon u)_{x})\Lambda^{s-1}\upsilon dx \nonumber \\
&\leq |\int_{R}[\Lambda^{s-1}\partial_{x}, \omega]\rho\Lambda^{s-1}\upsilon+ \omega\Lambda^{s-1}\partial_{x}\rho \Lambda^{s-1}\upsilon dx | +|\int_{R}[\Lambda^{s-1}\partial_{x}, u]\upsilon\Lambda^{s-1}\upsilon +u\Lambda^{s-1}\partial_{x}\upsilon \Lambda^{s-1}\upsilon dx| \nonumber\\
&\leq \|\omega\|_{H^{s}}\|\rho\|_{H^{s-1}}\|\upsilon\|_{H^{s-1}}
+ \frac{1}{\varepsilon}\|\omega\|_{H^{\sigma}}\|\rho \|_{H^{s-1}}\|\upsilon \|_{H^{s-1}}
+\| u\|_{H^{s}}\|\upsilon\|_{H^{s-1}}^{2}
+\frac{1}{\varepsilon}\| u\|_{H^{\sigma}}\|\upsilon\|_{H^{s-1}}^{2},
\end{align}
where $\frac{1}{2}<\sigma< s-1$.
And
\begin{align}\label{a332}
\int_{R}\partial_{x}\Lambda^{s-1}\omega\Lambda^{s-1}\upsilon dx\leq \|\partial_{x}\Lambda^{s-1}\omega\|_{L^{2}}\|\Lambda^{s-1}\upsilon\|_{L^{2}} \leq \|\omega\|_{H^{s}}\|\upsilon\|_{H^{s-1}}^{2}.
\end{align}
Let $ y_{1}^{2}(t)=\|\omega\|_{H^{s}}^{2}+ \|\upsilon\|_{H^{s-1}}^{2}$,
combining (\ref{a327})-(\ref{a332}) with a standard energy estimate $\|\omega\|_{H^{\sigma}}\leq c\varepsilon^{s-\sigma}$ for $\frac{1}{2}<\sigma< s-1$ (see \cite{Bona}), we have
\begin{align}\label{a333}
\frac{d}{dt}y_{1}(t)\leq K y_{1}(t)+ C(\varepsilon),
\end{align}
where $C(\varepsilon)\rightarrow 0$ as $\varepsilon \rightarrow 0$. The Gronwall's inequality shows
$$ y_{1}(t)= (y_{1}(0) + \frac{C(\varepsilon)}{K} )e^{Kt}- \frac{C(\varepsilon)}{K}.$$
 It's easy to see $ y_{1}(t) < \frac{\eta}{3}$ if choose $\varepsilon$ is small enough.

Since $\| U_{0,n}- U_{0}\|_{H^{s}}\rightarrow 0$, we can get $\| U^{\varepsilon}_{0,n}- U_{0}^{\varepsilon}\|_{H^{s}}\rightarrow 0$. The similar process would be used to deal with $D_{2}$ and $D_{3}$.

\section{ Wave-breaking }

\subsection{Blow~up~scenario}
  We introduce the ordinary equation of the flow generated by $u$
\begin{equation}\label{e41}
  \left\{
  \begin{aligned}
  &\frac{\partial}{\partial t}q(t,x)= u(t,q(t,x)), ~~(t, x)\in [0, T)\times R, \\
 & q(0,x)=x.\\
\end{aligned}
 \right.
\end{equation}
Classical results in the theory of ordinary differential equations imply that there exists a unique solution
$q \in C([0, T)\times R )$ to (\ref{e41}) such that the function $q(t,x)$ is an increasing function with respect to $x$ with
 \begin{equation}\label{e42}
 q_{x}(t,x)= exp ~(\int_{0}^{t}u_{x}(s, q(s,x))ds )> 0 , ~~\forall(t, x)\in [0, T)\times R.
 \end{equation}
 \begin{lemma}\label{lem51}\cite{Guan}
 Let $(u_{0}, \rho_{0})\in H^{s}(R)\times H^{s-1}(R)$, $s> \frac{3}{2}$, $T$ is the maximal existence time
 of the corresponding solution $(u, \rho)$ of (\ref{e31}). Then we have
  \begin{equation}\label{e43}
 (\rho(t, q(t,x))+1)q_{x}(t,x)=  (\rho_{0}(x)+1)
 \end{equation}
\end{lemma}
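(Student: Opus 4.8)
The plan is to show that the quantity $P(t,x) := (\rho(t,q(t,x))+1)\,q_{x}(t,x)$ is constant in $t$ along each characteristic, and then to evaluate it at $t=0$. The starting point is to recast the second equation of (\ref{e31}) in transport form: writing $\bar{\rho}=\rho+1$, the equation $\rho_{t}+u\rho_{x}+\rho u_{x}+u_{x}=0$ becomes $\bar{\rho}_{t}+u\bar{\rho}_{x}+\bar{\rho}\,u_{x}=0$, which says that $\bar{\rho}=\rho+1$ is transported by the flow of $u$ while being stretched by the factor $u_{x}$. This is exactly the structure that the flow map $q$ in (\ref{e41}) is designed to exploit.

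First I would differentiate $P$ in $t$ at fixed $x$, i.e. along the characteristic emanating from $x$. For the first factor, the chain rule together with $q_{t}=u(t,q)$ gives $\frac{d}{dt}[\rho(t,q(t,x))+1]=(\rho_{t}+u\rho_{x})(t,q(t,x))$, and the transport form of the equation identifies this with $-(\rho(t,q)+1)\,u_{x}(t,q)$. For the second factor, differentiating the explicit formula (\ref{e42}) for $q_{x}$ yields $\frac{d}{dt}q_{x}(t,x)=u_{x}(t,q(t,x))\,q_{x}(t,x)$. By the product rule the two contributions are then $-(\rho(t,q)+1)u_{x}(t,q)\,q_{x}$ and $(\rho(t,q)+1)\,u_{x}(t,q)\,q_{x}$, which cancel exactly, so $\frac{d}{dt}P(t,x)=0$. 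It then remains only to evaluate the conserved quantity at the initial time: since $q(0,x)=x$ and (\ref{e42}) gives $q_{x}(0,x)=1$, we obtain $P(0,x)=\rho_{0}(x)+1$, and hence $(\rho(t,q(t,x))+1)q_{x}(t,x)=\rho_{0}(x)+1$ for all $t\in[0,T)$, which is (\ref{e43}).

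The one point requiring care — and the \emph{main obstacle} — is the regularity needed to justify differentiating $\rho$ along characteristics. Since $(u,\rho)\in C([0,T);H^{s}\times H^{s-1})$ with only $s>\frac{3}{2}$, we do have $u\in C^{1}$, so the flow $q$ and its derivative $q_{x}$ are well defined and $C^{1}$ in $t$; but $\rho_{x}$ lies only in $H^{s-2}$ and need not be continuous, so the pointwise chain-rule computation above is not immediately licensed. I would handle this by first carrying out the computation for the smooth solutions arising from mollified initial data, where every step is classical, and then passing to the limit using the continuous dependence established in Subsection 3.4 together with the $C([0,T);H^{s-1})$-continuity of $\rho$. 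Alternatively, one can note from the equation that $\rho_{t}=-(u\rho)_{x}-u_{x}\in H^{s-2}$, so $t\mapsto\rho(t,\cdot)$ is Lipschitz into $H^{s-2}$ and $t\mapsto\rho(t,q(t,x))$ is absolutely continuous, which is enough to invoke the fundamental theorem of calculus and run the cancellation rigorously.
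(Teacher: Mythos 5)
Your computation is correct, and it is worth noting at the outset that the paper itself offers no proof of this lemma at all: it is quoted with the citation \cite{Guan}, and your argument---differentiating $(\rho(t,q(t,x))+1)\,q_{x}(t,x)$ in $t$, using the transport form $\bar\rho_{t}+u\bar\rho_{x}+\bar\rho u_{x}=0$ of the second equation of \eqref{e31} together with $\partial_{t}q_{x}=u_{x}(t,q)\,q_{x}$ from \eqref{e42}, and evaluating at $t=0$---is precisely the standard argument of that cited reference. So you have reconstructed the proof of the source rather than found an alternative to anything written in the paper. Your closing regularity discussion is a genuine point that both the paper and the citation gloss over: for $\frac{3}{2}<s<\frac{5}{2}$ one only has $\rho_{x}\in H^{s-2}$, which need not be a function, so the pointwise chain rule along characteristics is not free. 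Of your two proposed fixes, the first (solve with mollified data, where the identity holds classically, then pass to the limit using the continuous dependence of Section 3, the uniform convergence $\rho^{\varepsilon}\to\rho$ coming from $H^{s-1}\hookrightarrow L^{\infty}$, and ODE stability of the flows $q^{\varepsilon}\to q$, $q^{\varepsilon}_{x}\to q_{x}$) is the right and standard way to close the gap. The second fix is insufficient as stated: Lipschitz continuity of $t\mapsto\rho(t,\cdot)$ into $H^{s-2}$ does not yield absolute continuity of $t\mapsto\rho(t,q(t,x))$, because pointwise evaluation is not a bounded functional on $H^{s-2}$ when $s-2<\frac{1}{2}$; interpolation with the $H^{s-1}$ bound only gives H\"older continuity in time of exponent roughly $\min(1,s-\tfrac{3}{2})$, which is short of what the fundamental theorem of calculus requires.
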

\begin{lemma}\label{lem52}
 Let $(u_{0}, \rho_{0})\in H^{s}(R)\times H^{s-1}(R)$, $s> \frac{3}{2}$, $T$ is the maximal existence time
 of the corresponding soution $(u, \rho)$ of (\ref{e31}). For any $t \in [0,T)$, we have the following conservations
\begin{equation*}
 \int_{R} u dx=\int_{R}u_{0} dx,~~~~~~~~\int_{R}\overline{\rho} dx=\int_{R}\overline{\rho}_{0} dx.
 \end{equation*}
 Moreover, if $\rho_{0}+1 \geq 0$, we have
 \begin{equation*}
  \|u\|_{L^{2}}\leq \|u_{0}\|_{L^{2}}+\frac{1}{2}\|\overline{\rho}_{0}\|_{L^{1}}t
 \end{equation*}
\end{lemma}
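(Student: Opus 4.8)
The plan is to obtain the two integral identities by integrating the equations of (\ref{e31}) directly over $R$, and then to derive the $L^{2}$ bound from an energy identity for $\|u\|_{L^{2}}$, closing the estimate with the pointwise sign of $\overline{\rho}$ supplied by Lemma \ref{lem51}.

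First I would prove the conservation laws. Integrating the first equation of (\ref{e31}) over $R$, the transport term contributes $\int_{R}uu_{x}\,dx=\frac12\int_{R}\partial_{x}(u^{2})\,dx=0$ and the right-hand side is the integral of a total derivative; since $u(t,\cdot)\in H^{s}$ decays at infinity all boundary terms vanish, so $\frac{d}{dt}\int_{R}u\,dx=0$ and $\int_{R}u\,dx=\int_{R}u_{0}\,dx$. For the density I would rewrite the second equation as $\rho_{t}+\partial_{x}(u\rho)+\partial_{x}u=0$, using $u\partial_{x}\rho+\rho\partial_{x}u=\partial_{x}(u\rho)$; integrating over $R$ annihilates both total-derivative terms and gives $\frac{d}{dt}\int_{R}\rho\,dx=0$, that is $\int_{R}\overline{\rho}\,dx=\int_{R}\overline{\rho}_{0}\,dx$ because $\overline{\rho}=\rho+1$.

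For the $L^{2}$ estimate, write $G=(I-\partial_{x}^{2})^{-1}$, whose convolution kernel is $p(x)=\frac12 e^{-|x|}$. Multiplying the first equation by $u$ and integrating, the term $\int_{R}u^{2}u_{x}\,dx=\frac13\int_{R}\partial_{x}(u^{3})\,dx$ vanishes and one is left with
\begin{equation*}
\frac12\frac{d}{dt}\|u\|_{L^{2}}^{2}=\int_{R}u\,\partial_{x}G\rho\,dx-\int_{R}u\,\partial_{x}Gu\,dx.
\end{equation*}
The operator $\partial_{x}G$ is skew-adjoint on $L^{2}$, since its Fourier symbol $ik/(1+k^{2})$ is purely imaginary, so the self-interaction term $\int_{R}u\,\partial_{x}Gu\,dx$ vanishes. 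Because $\partial_{x}G$ annihilates constants ($\int_{R}\partial_{x}p\,dx=0$) I may replace $\rho$ by $\overline{\rho}$ and write $\partial_{x}G\rho=(\partial_{x}p)*\overline{\rho}$; then Cauchy--Schwarz together with Young's convolution inequality yields
\begin{equation*}
\Big|\int_{R}u\,(\partial_{x}p)*\overline{\rho}\,dx\Big|\le\|u\|_{L^{2}}\,\|\partial_{x}p\|_{L^{2}}\,\|\overline{\rho}\|_{L^{1}},
\end{equation*}
where a direct computation gives $\|\partial_{x}p\|_{L^{2}}=\frac12$.

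Finally I would close the argument. Dividing through by $\|u\|_{L^{2}}$ reduces the identity to $\frac{d}{dt}\|u\|_{L^{2}}\le\frac12\|\overline{\rho}(t)\|_{L^{1}}$. The main obstacle---and the only step using $\rho_{0}+1\ge0$---is to control $\|\overline{\rho}(t)\|_{L^{1}}$: by Lemma \ref{lem51} the relation $(\rho(t,q)+1)q_{x}=\rho_{0}+1\ge0$ combined with $q_{x}>0$ forces $\overline{\rho}(t,\cdot)\ge0$ everywhere, so that $\|\overline{\rho}(t)\|_{L^{1}}=\int_{R}\overline{\rho}(t)\,dx=\int_{R}\overline{\rho}_{0}\,dx=\|\overline{\rho}_{0}\|_{L^{1}}$ by the conservation law established above. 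Hence $\frac{d}{dt}\|u\|_{L^{2}}\le\frac12\|\overline{\rho}_{0}\|_{L^{1}}$, and integrating over $[0,t]$ gives $\|u(t)\|_{L^{2}}\le\|u_{0}\|_{L^{2}}+\frac12\|\overline{\rho}_{0}\|_{L^{1}}t$. The delicate point is that conservation of the integral of $\overline{\rho}$ controls its $L^{1}$ norm only once positivity is known; without the sign condition the $L^{1}$ norm need not be conserved.
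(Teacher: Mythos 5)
Your proposal is correct and follows essentially the same route as the paper: integrate the equations to get the conservation laws, use Lemma \ref{lem51} together with $q_{x}>0$ to deduce $\overline{\rho}\geq 0$ and hence $\|\overline{\rho}(t)\|_{L^{1}}=\|\overline{\rho}_{0}\|_{L^{1}}$, then run the $L^{2}$ energy identity where the cubic term and the self-interaction term $\int_{R}u\,\partial_{x}(I-\partial_{x}^{2})^{-1}u\,dx$ vanish, and bound the cross term by Cauchy--Schwarz and Young's inequality with the kernel $\partial_{x}p=-\tfrac{1}{2}\operatorname{sign}(x)e^{-|x|}$. In fact your bookkeeping is slightly cleaner: you keep the factor $\|\partial_{x}p\|_{L^{2}}=\tfrac12$ and so obtain exactly the stated bound $\|u_{0}\|_{L^{2}}+\tfrac12\|\overline{\rho}_{0}\|_{L^{1}}t$, whereas the paper's own display discards this factor and ends with the weaker constant $1$.
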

\begin{proof}
   Using the equation (\ref{e11}), integration by parts, we find
\begin{align}\label{a44}
 &\frac{d}{dt}\int_{R}u dx =-\frac{1}{2}\int_{R}(u^{2})_{x}dx+\int_{R}\partial_{x}(I-\partial_{x}^{2})^{-1}\overline{\rho} dx-
 \int_{R}\partial_{x}(I-\partial_{x}^{2})^{-1}udx=0 \nonumber \\
 &\frac{d}{dt}\int_{R}\overline{\rho} dx =-\int_{R}(\overline{\rho} u)_{x}dx=0
\end{align}

  When $\rho_{0}(x)+1\geq 0$, by (\ref{e43}), we can get $\rho(t, x)+1 =\overline{\rho}(t, x) \geq 0$ and $\|\overline{\rho}\|_{L^{1}}=\|\overline{\rho}_{0}\|_{L^{1}}$ for all $[0, T)\times R$.
 \begin{align}\label{a45}
 &\frac{1}{2}\frac{d}{dt}\|u\|_{L^{2}}^{2} =-\int_{R}u^{2}u_{x}dx+\int_{R}\partial_{x}(I-\partial_{x}^{2})^{-1}\overline{\rho} udx-\int_{R}\partial_{x}(I-\partial_{x}^{2})^{-1}uudx\nonumber \\
 &=\int_{R}\partial_{x}(I-\partial_{x})^{-1}\overline{\rho} udx \leq \|\partial_{x}(I-\partial_{x}^{2})^{-1}\overline{\rho} \|_{L^{2}}\|u\|_{L^{2}}
 \leq \frac{1}{2}\|sign(x)e^{-|x|}\|_{L^{2}}\|\overline{\rho}\|_{L^{1}}\|u\|_{L^{2}}\leq  \|\overline{\rho}_{0}\|_{L^{1}}\|u\|_{L^{2}}
\end{align}
 Integrating (\ref{a45}) from 0 to $t$, we can get
 \begin{equation}\label{e46}
 \|u\|_{L^{2}}\leq \|u_{0}\|_{L^{2}}+ \|\overline{\rho}_{0}\|_{L^{1}}t \leq K_{1}(T)
 \end{equation}
\end{proof}
\begin{lemma}\label{lem43}\cite{Boc}
 Let $T > 0$ and $ u\in C^{1}([0, T); H^{2})$. Then for exery $t\in [0, T)$, there exists at least one point $\xi(t)\in R$ with
 \begin{equation}\label{e47}
 m(t)=\inf_{x\in R} u_{x}(t,x)= u_{x}(t, \xi(t))
 \end{equation}
 and the function $m(t)$ is almost everywhere differential on $(0, T)$ with
  \begin{equation}\label{e48}
 \frac{dm}{dt}= u_{tx}(t, \xi(t))~~~a.e.~~(0, T)
 \end{equation}
\end{lemma}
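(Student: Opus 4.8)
The plan is to establish first the existence of a minimizing point $\xi(t)$ via Sobolev embedding, then to show that $m(t)$ is locally Lipschitz in $t$ (hence differentiable almost everywhere), and finally to identify the derivative through one-sided difference quotients. The regularity hypothesis $u \in C^1([0,T); H^2)$ is exactly what feeds each of these steps.

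For the existence of $\xi(t)$, I would fix $t \in [0,T)$ and note that $u_x(t,\cdot) \in H^1(R)$; the embedding $H^1(R) \hookrightarrow C_0(R)$ then guarantees that $u_x(t,\cdot)$ is continuous and decays to zero at infinity, so the infimum $m(t) = \inf_{x\in R} u_x(t,x)$ is attained at some $\xi(t) \in R$. This disposes of the first assertion.

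For the Lipschitz continuity, I would fix a compact subinterval $[a,b] \subset (0,T)$. Since $u_{tx} \in C([0,T); H^1)$ and $H^1 \hookrightarrow L^\infty$, the constant $M := \sup_{\tau\in[a,b]} \|u_{tx}(\tau,\cdot)\|_{L^\infty}$ is finite. For $t,s \in [a,b]$ with minimizers $\xi(t), \xi(s)$, the definition of the infimum yields
$$m(t) - m(s) \leq u_x(t,\xi(s)) - u_x(s,\xi(s)) = \int_s^t u_{tx}(\tau,\xi(s))\,d\tau \leq M|t-s|,$$
and the symmetric inequality gives $|m(t)-m(s)| \leq M|t-s|$. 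Thus $m$ is locally Lipschitz on $(0,T)$, hence absolutely continuous, and therefore differentiable almost everywhere.

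Finally, at a point $t$ of differentiability I would use $m(t+h) \leq u_x(t+h,\xi(t))$ together with $m(t) = u_x(t,\xi(t))$. For $h>0$ this gives
$$\frac{m(t+h)-m(t)}{h} \leq \frac{u_x(t+h,\xi(t)) - u_x(t,\xi(t))}{h} \longrightarrow u_{tx}(t,\xi(t)) \quad (h\to 0^+),$$
whereas for $h<0$ the same inequality divided by the negative number $h$ reverses and produces the opposite bound as $h \to 0^-$; combining the two one-sided limits forces $\frac{dm}{dt}(t) = u_{tx}(t,\xi(t))$. I expect the Lipschitz estimate to be the main obstacle, as it relies on the uniform-in-time $L^\infty$ control of $u_{tx}$ furnished by the $C^1([0,T);H^2)$ regularity via the Sobolev embedding; once that bound is secured, the remaining steps are routine.
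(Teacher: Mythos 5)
The paper offers no proof of this lemma at all—it is quoted verbatim from \cite{Boc} (Constantin--Escher)—and your argument is precisely the standard proof from that reference: attainment of the infimum via $H^{1}(R)\hookrightarrow C_{0}(R)$, local Lipschitz continuity of $m$ from the uniform $L^{\infty}$ bound on $u_{tx}$ over compact time intervals, and identification of $m'(t)$ by playing the two one-sided difference quotients at the fixed point $\xi(t)$ against each other. Your proof is correct; the only point you gloss is attainment of the infimum in the degenerate case $\inf_{x}u_{x}(t,x)=0$ (a strictly positive $C_{0}$ function does not attain its infimum), which is ruled out here because $u_{x}(t,\cdot)>0$ everywhere would make $u(t,\cdot)$ strictly increasing, contradicting $u(t,\cdot)\in L^{2}(R)$.
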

\begin{lemma}\label{lem44}
If $\rho_{0}+1 \geq 0$, we have
\begin{align}
  \sup_{x\in R}u_{x}(t, x)\leq (1+K\|(u_{0},\rho_{0})\|^{2}_{H^{s}\times H^{s-1}})e^{(\|\overline{\rho}_{0}\|_{L^{1}}+ K_{1}(T)+\frac{1}{2})t} \nonumber
\end{align}
\end{lemma}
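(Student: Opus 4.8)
The plan is to derive a first-order differential inequality for the quantity $M(t):=\sup_{x\in R}u_x(t,x)$ and then integrate it by Gronwall's inequality. The starting point is to differentiate the first equation of (\ref{e31}) in $x$. Writing $G=(I-\partial_x^2)^{-1}$ and exploiting the operator identity $\partial_x^2 G=G-I$ (which follows at once from $(I-\partial_x^2)G=I$), the second-order nonlocal term $\partial_x^2 G(\rho-u)$ collapses to $G(\rho-u)-(\rho-u)$, so that $u_x$ solves
\begin{equation*}
u_{tx}+uu_{xx}+u_x^2=G(\rho-u)-\rho+u.
\end{equation*}
This is the decisive reformulation: the dangerous operator $\partial_x^2 G$ has been traded for the bounded, positivity-preserving operator $G$ together with zeroth-order terms.

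Next I would transfer this pointwise identity to the supremum. Applying Lemma \ref{lem43} to $-u$ (so that $\sup_x u_x=-\inf_x(-u_x)$), for almost every $t$ there is a point $\eta(t)$ with $M(t)=u_x(t,\eta(t))$ and $\frac{dM}{dt}=u_{tx}(t,\eta(t))$; since $\eta(t)$ is an interior maximum of $x\mapsto u_x(t,x)$, one has $u_{xx}(t,\eta(t))=0$. Evaluating the displayed equation at $\eta(t)$ and discarding the favourable term $-M^2\le 0$ gives
\begin{equation*}
\frac{dM}{dt}=-M^2+G(\rho-u)(t,\eta)-\rho(t,\eta)+u(t,\eta)\le G(\rho-u)(t,\eta)-\rho(t,\eta)+u(t,\eta).
\end{equation*}
Note also $M(t)\ge 0$, since $u(t,\cdot)\in H^s$ decays at infinity and hence $\int_R u_x\,dx=0$, which will let me relax any constant bound to the form $\lambda(1+M)$.

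It then remains to bound the right-hand side uniformly on $[0,T]$. Here I would use that $G$ has the nonnegative kernel $\tfrac12 e^{-|x|}$, whence $\|Gf\|_{L^\infty}\le\tfrac12\|f\|_{L^1}$ and $\|Gf\|_{L^\infty}\le\tfrac12\|f\|_{L^2}$. Writing $\rho=\overline\rho-1$ and using that $\rho_0+1\ge0$ propagates to $\overline\rho\ge 0$ for all time (Lemma \ref{lem51}) together with the $L^1$-conservation $\|\overline\rho\|_{L^1}=\|\overline\rho_0\|_{L^1}$ (Lemma \ref{lem52}), the term $G\rho=G\overline\rho-1$ is bounded above by $\tfrac12\|\overline\rho_0\|_{L^1}$, and $-\rho(t,\eta)=1-\overline\rho(t,\eta)\le1$. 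For the velocity contributions I would invoke the $L^2$ bound $\|u\|_{L^2}\le K_1(T)$ of Lemma \ref{lem52} to control $-Gu\le\tfrac12 K_1(T)$, and the Sobolev embedding together with the energy bound of Theorem \ref{th31} to control $u(t,\eta)\le\|u\|_{L^\infty}$. Collecting and combining these constants yields
\begin{equation*}
\frac{dM}{dt}\le\Big(\|\overline\rho_0\|_{L^1}+K_1(T)+\tfrac12\Big)(1+M),
\end{equation*}
and Gronwall's inequality, with $M(0)=\sup_x u_{0,x}$ dominated by $1+K\|(u_0,\rho_0)\|^2_{H^s\times H^{s-1}}$, delivers the stated estimate.

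The main obstacle I anticipate is one of regularity rather than of computation. Lemma \ref{lem43} is stated for $u\in C^1([0,T);H^2)$, and the crucial identity $u_{xx}(t,\eta(t))=0$ presupposes that $u(t,\cdot)$ is genuinely $C^2$, whereas the solution class here is only $H^s$ with $s>\tfrac32$. I would therefore first carry out the entire argument for smooth (large-$s$) solutions, where the maximum-principle step is rigorous, and then pass to general $s>\tfrac32$ by approximation, invoking the continuous dependence established in Section 3. A secondary, more technical point is the constant bookkeeping in the estimate of $G(\rho-u)+u$: the term $u(t,\eta)$ is controlled through the $H^s$-energy of Theorem \ref{th31} rather than through the conserved $L^2$ quantity, so some care is needed to absorb it into the prefactor $1+K\|(u_0,\rho_0)\|^2$ and to land precisely on the exponential rate $\|\overline\rho_0\|_{L^1}+K_1(T)+\tfrac12$.
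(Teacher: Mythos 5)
Your proposal reproduces the first half of the paper's argument and then finishes by a leaner route. The paper also works at the spatial maximum of $u_{x}$: it selects $\xi(t)$ via Lemma \ref{lem43}, pulls it back through the characteristic flow (\ref{e41}), and arrives at exactly your ODE, $\frac{d}{dt}\widetilde M=-\widetilde M^{2}+f$ with $f=\partial_{x}^{2}(I-\partial_{x}^{2})^{-1}(\overline\rho-u)$ along the trajectory (your identity $\partial_{x}^{2}(I-\partial_{x}^{2})^{-1}=(I-\partial_{x}^{2})^{-1}-I$ left undecomposed). One technical difference: because the paper follows a characteristic, the combination $u_{tx}+uu_{xx}$ is supplied directly by the differentiated PDE, so it never invokes the pointwise statement $u_{xx}(t,\eta(t))=0$ that you rely on; but both versions still need $u_{xx}$ to make classical sense, so the smooth-solutions-plus-density step you propose is required either way. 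Where you genuinely diverge is the endgame: the paper couples $\widetilde M$ with $\gamma(t)=\overline\rho(t,\xi(t))$, forms the Lyapunov function (\ref{e417}), proves $\widetilde\omega'\le(\|\overline\rho_{0}\|_{L^{1}}+K_{1}(T)+\frac12)\widetilde\omega$ and applies Gronwall, which forces the strict-positivity preparation (\ref{e415})--(\ref{e416}) (and the final step $\widetilde M\le\widetilde\omega$ in (\ref{a419}) tacitly needs a lower bound on $\gamma(0)$). You instead discard $-M^{2}\le0$ and integrate a bounded right-hand side, using $M\ge0$. Your route is simpler and in fact stronger: it gives linear-in-$t$ growth of $\sup_{x}u_{x}$, which implies the stated exponential bound, and it needs only $\overline\rho\ge0$, not the $\beta$-infimum argument; for this one-sided estimate the Lyapunov machinery (imported from the two-component Camassa--Holm literature) is not needed.

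The one substantive issue is the term $u(t,\eta(t))$ produced by $\partial_{x}^{2}(I-\partial_{x}^{2})^{-1}u=(I-\partial_{x}^{2})^{-1}u-u$, which you flag as a ``secondary'' bookkeeping point. It is more than that -- but it is a defect of the paper as much as of your sketch: the paper's estimate (\ref{a414}) silently drops this term (an $L^{\infty}$ quantity cannot be bounded by $\|\overline\rho_{0}\|_{L^{1}}+K_{1}(T)$, which only controls $L^{1}$ and $L^{2}$ norms), and that omission is the only reason it lands exactly on the advertised rate. Your proposed fix -- the $H^{s}$ energy bound of Theorem \ref{th31} -- is valid only on the local existence interval $[0,\widetilde T]$, whereas Lemma \ref{lem44} is invoked in Theorem \ref{th41} up to the maximal time $T$, so as written your bound would not cover the interval where it is actually used. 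The clean repair within your own scheme is to first bound $\|u(t)\|_{L^{\infty}}$ on $[0,T)$ from the transport form of the first equation of (\ref{e31}): $\frac{d}{dt}u(t,q(t,x))=\partial_{x}(I-\partial_{x}^{2})^{-1}(\overline\rho-u)$, whose kernel is bounded, so by Lemma \ref{lem52} the right-hand side is at most $\frac12\|\overline\rho_{0}\|_{L^{1}}+\frac12 K_{1}(T)$, giving $\|u(t)\|_{L^{\infty}}\le\|u_{0}\|_{L^{\infty}}+Ct$ on $[0,T)$. With that in hand your Gronwall argument closes on all of $[0,T)$, with a rate constant of the same character as (though not identical to) the one displayed in the lemma -- a discrepancy equally present in the paper's own derivation.
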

\begin{proof}
Since $u\in H^{s}$, $s>\frac{3}{2}$, we know
\begin{equation}\label{e49}
\inf_{x\in R}{u_{x}(t,x)}\leq 0, ~~~~t\in [0,T),
\end{equation}
\begin{equation}\label{e410}
\sup_{x\in R}{u_{x}(t,x)}\geq 0.~~~~~t\in [0,T),
\end{equation}
it is sufficient to estimate $\sup_{x\in R}{u_{x}(t,x)}$.

 As the Lemma \ref{lem43} says $\widetilde{M}(t)= u_{x}(t, \xi(t))=\sup_{x\in R}u_{x}(t, x)$, we take the characteristic $q(t, x)$
defined in (\ref{e41}) and choose $x_{1}(t)\in R$ such that
\begin{equation}\label{e411}
q(t, x_{1}(t))= \xi(t).
\end{equation}
 Let  $\gamma(t)=\overline{\rho}(t, q(t, x_{1}))= \overline{\rho}(t, \xi(t))$, along the trajectory $q(t, x_{1})$,
differentiating (\ref{e31}) with respect to $x$, we have
\begin{align}\label{e412}
 &\frac{d}{dt}\widetilde{M }= u_{tx}+uu_{xx}=-\widetilde{M}^{2}+f(t, x_{1})
 \nonumber \\
 &\frac{d}{dt}\gamma = -\gamma \widetilde{M}
\end{align}
 and $f(t,x)$ can be represented as
\begin{align}\label{e413}
f(t,x)=  \partial_{x}^{2}(I-\partial_{x}^{2})^{-1}(\overline{\rho}-u)(t,q).
\end{align}
By lemma \ref{lem52}, we can get
\begin{align}\label{a414}
\mid f(t,x)\mid\leq \frac{1}{2}\|e^{-|x|}\|_{L^{\infty}}\|\overline{\rho}\|_{L^{1}}+ \frac{1}{2}\|e^{-|x|}\|_{L^{1}}\|u\|_{L^{2}}\leq \|\overline{\rho}_{0}\|_{L^{1}}+ K_{1}(T).
\end{align}

Since $\gamma(t,x)$ has the same sign with $\gamma(0,x)= \overline{\rho}_{0}(x)$ for every $x\in R$. In view of Sobolev
imbedding theorem, by $\rho_{0}\in H^{s-1}$, $s-1 > \frac{1}{2}$, we have $\rho_{0}\in C_{0}(R)$
and there exists $R_{0}$ such that $|\rho_{0}|\leq \frac{1}{2}$ for all $|x|\geq R_{0}$. Since $\rho_{0}+1>0$ for all $ x\in R$, it follows that
 \begin{equation}\label{e415}
 \inf_{|x|\leq R_{0}}|\gamma(0,x)|=\inf_{|x|\leq R_{0}}|\rho_{0}+1|>0.
 \end{equation}
 Set $\beta = \rm{min}\{\frac{1}{2}; \inf_{|x|\leq R_{0}}|\gamma(0,x)|\}$, then $|\gamma(0,x)|\geq \beta >0$ for all $x\in R$. Thus
 \begin{equation}\label{e416}
 \gamma(0,x)\gamma(t,x)>0.
 \end{equation}
 We will estimate $\widetilde{M}(t)= \sup_{x\in R}{u_{x}(t,x)}$, (\ref{e410}) imply $\widetilde{M}(t)\geq 0, t\in[0, T)$. We consider the following Lyapunov function
 \begin{equation}\label{e417}
 \widetilde{\omega}(t)=\gamma(0)\gamma(t)+\frac{\gamma(0)}{\gamma(t)}(1+\widetilde{M}^{2}(t))
 \end{equation}
 we have
 \begin{equation*}
 \widetilde{\omega}(t)\geq \gamma(0)\gamma(t);~~\widetilde{\omega}(t)\geq \gamma(t)\widetilde{M}(t).
 \end{equation*}
 Differentiating
\begin{align}\label{e418}
\frac{\partial\widetilde{\omega}}{\partial t}(t)&= -\gamma(0)\gamma(t)\widetilde{M}(t)+\frac{\gamma(0)\widetilde{M}(t)}{\gamma(t)}(1+\widetilde{M}^{2}(t))+ \frac{\gamma(0)}{\gamma(t)}2\widetilde{M}(t)(-\widetilde{M}^{2}(t)+f)\nonumber\\
&= \frac{\gamma(0)\widetilde{M}(t)}{\gamma(t)}(-\gamma^{2}-\widetilde{M}^{2}(t)+2f+1)\nonumber \\
&\leq \frac{2\gamma(0)\widetilde{M}(t)}{\gamma(t)}(|f|+\frac{1}{2})\nonumber \\
&\leq \frac{\gamma(0)}{\gamma(t)}(1+\widetilde{M}^{2}(t))(\|\overline{\rho}_{0}\|_{L^{1}}+ K_{1}(T)+\frac{1}{2})\nonumber \\
&\leq\widetilde{\omega}(t)(\|\overline{\rho}_{0}\|_{L^{1}}+ K_{1}(T)+\frac{1}{2})
\end{align}
An Gronwall inequality shows
\begin{align}\label{a419}
 \widetilde{M}(t)\leq \widetilde{\omega}(t)\leq \widetilde{\omega}(0)e^{(\|\overline{\rho}_{0}\|_{L^{1}}+ K_{1}(T)+\frac{1}{2})t} \leq (1+K\|(u_{0},\rho_{0})\|^{2}_{H^{s}\times H^{s-1}})e^{(\|\overline{\rho}_{0}\|_{L^{1}}+ K_{1}(T)+\frac{1}{2})t}
\end{align}
where we use (\ref{e314}), K is a constant.
\end{proof}

\begin{theorem}\label{th41}
 If $(u_{0}, \rho_{0})\in H^{s}\times H^{s-1}$ ($s >\frac{3}{2}$) and $\rho_{0}+1 \geq 0$, let $T$ be the maximal existence time of the solution $(u,\rho)$ to $(\ref{e31})$. Then the corresponding solution blows up in finite time if and only if
\begin{equation}\label{e420}
 \lim_{t\rightarrow T}\inf_{x\in R}{u_{x}(t,x)}=-\infty
\end{equation}
\end{theorem}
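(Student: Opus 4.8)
The plan is to deduce the criterion from the continuation property built into the local theory of Theorem \ref{th31}: the solution can be prolonged past $T$ as long as $\|(u,\rho)(t)\|_{H^s\times H^{s-1}}$ stays bounded, so finite-time blow-up is the same as $\limsup_{t\to T}\|(u,\rho)(t)\|_{H^s\times H^{s-1}}=\infty$. One direction is then immediate. Since $s-1>\tfrac12$, Lemma \ref{lem21} and the Sobolev embedding give $\|u_x\|_{L^\infty}\le c\|u_x\|_{H^{s-1}}=c\|u\|_{H^s}$, hence $|\inf_{x}u_x(t,x)|\le c\|u(t)\|_{H^s}$; therefore $\lim_{t\to T}\inf_x u_x(t,x)=-\infty$ forces $\|u(t)\|_{H^s}\to\infty$, i.e. the solution breaks down, and by the local theory this can only happen at a finite $T$. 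All the substance is in the converse.

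For the converse I argue by contraposition: assuming $T<\infty$ together with a lower bound $u_x(t,x)\ge -M$ on $[0,T)$, I produce a uniform bound on the energy $E(t)=\|u\|_{H^s}^2+\|\rho\|_{H^{s-1}}^2$, which contradicts the maximality of $T$. First, combining this lower bound with the upper bound of Lemma \ref{lem44} (valid since $\rho_0+1\ge0$) shows that $\|u_x(t)\|_{L^\infty}$ is bounded on $[0,T)$ by some $C=C(M,T)$. Then, using the transport identity of Lemma \ref{lem51}, $(\rho(t,q)+1)q_x=\rho_0+1$ with $q_x=\exp\bigl(\int_0^t u_x(s,q)\,ds\bigr)$ from (\ref{e42}), the control of $\|u_x\|_{L^\infty}$ yields $\|\rho(t)\|_{L^\infty}\le(\|\rho_0\|_{L^\infty}+1)e^{CT}+1$ on $[0,T)$. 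This $L^\infty$ bound on $\rho$ will be decisive.

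Next I re-run the energy estimates of Section 3, but now tracking $\|u_x\|_{L^\infty}$ as the leading coefficient instead of $\|u\|_{H^s}$. Pairing $\Lambda^s$ applied to the $u$-equation with $\Lambda^s u$, the commutator estimate of Proposition \ref{p22} and integration by parts on the transport term give $|\int\Lambda^s(uu_x)\Lambda^s u|\lesssim\|u_x\|_{L^\infty}\|u\|_{H^s}^2$, while the nonlocal term contributes $(\|\rho\|_{H^{s-1}}+\|u\|_{H^{s-1}})\|u\|_{H^s}$. For the $\rho$-equation I pair $\Lambda^{s-1}$ with $\Lambda^{s-1}\rho$ and write $u\rho_x+\rho u_x=(u\rho)_x$; the transport part again produces $\tfrac12\|u_x\|_{L^\infty}\|\rho\|_{H^{s-1}}^2$, and the commutator $[\Lambda^{s-1}\partial_x,u]\rho$ is the delicate term. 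Estimating it by a Kato--Ponce refinement, $\|[\Lambda^{s-1}\partial_x,u]\rho\|_{L^2}\lesssim\|u_x\|_{L^\infty}\|\rho\|_{H^{s-1}}+\|u\|_{H^s}\|\rho\|_{L^\infty}$, and invoking the $L^\infty$ bound on $\rho$, this term is controlled by $\|u_x\|_{L^\infty}\|\rho\|_{H^{s-1}}^2+C\|u\|_{H^s}\|\rho\|_{H^{s-1}}\le\|u_x\|_{L^\infty}\|\rho\|_{H^{s-1}}^2+C\,E$. Summing the two estimates gives $\tfrac{d}{dt}E\le C(1+\|u_x\|_{L^\infty})E$, and since $\|u_x\|_{L^\infty}$ is bounded on $[0,T)$, Gronwall's inequality bounds $E$ on $[0,T)$, the desired contradiction.

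The main obstacle is the coupling term $\|u\|_{H^s}\|\rho\|_{H^{s-1}}$ arising from the commutator in the $\rho$-estimate. Handled only with Proposition \ref{p21} it appears as $\|u\|_{H^s}\|\rho\|_{H^{s-1}}^2\sim E^{3/2}$, which a linear Gronwall cannot absorb and which is exactly the obstruction to a lower bound on $u_x$ noted in the introduction. The resolution is to bound the commutator by the sharper $\|u_x\|_{L^\infty}\|\rho\|_{H^{s-1}}+\|u\|_{H^s}\|\rho\|_{L^\infty}$ and to feed in the independent $L^\infty$ bound on $\rho$ supplied by Lemma \ref{lem51}; this is available precisely because $\|u_x\|_{L^\infty}$ (and not merely $\inf_x u_x$) is controlled once the upper bound of Lemma \ref{lem44} is in force. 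Care is also needed to ensure that in every product the top-order derivative on $u$ is measured in $\|u_x\|_{L^\infty}$ wherever it multiplies the full energy.
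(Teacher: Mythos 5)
Your proposal is correct in outline and rests on the same pillars as the paper's own proof of Theorem \ref{th41}: Sobolev embedding for the easy direction; Lemma \ref{lem44} (which needs $\rho_0+1\geq 0$) to turn a lower bound on $u_x$ into a bound on $\|u_x\|_{L^{\infty}}$; the transport identity of Lemma \ref{lem51} together with (\ref{e42}) to turn that into an $L^{\infty}$ bound on $\overline{\rho}$; and an energy inequality of the form $\frac{d}{dt}E\lesssim(1+\|u_x\|_{L^{\infty}}+\|\rho\|_{L^{\infty}})E$ closed by Gronwall and the continuation property of Theorem \ref{th31}. The paper runs the argument directly (blow-up forces $\|u_x\|_{L^{\infty}}\rightarrow\infty$ or $\|\rho\|_{L^{\infty}}\rightarrow\infty$, and each case is then reduced to $\inf_x u_x\rightarrow-\infty$), while you run it contrapositively; that difference is organizational. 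The genuine difference is how the transport term in the $\rho$-equation is tamed. The paper composes with the flow, $v=u\circ q$, $h=\rho\circ q$, so that in (\ref{e423}) the equation for $h$ has no transport part and only the product estimate of Proposition \ref{p24} is needed ($\|hv_x\|_{H^{s-1}}\lesssim\|h\|_{\infty}\|v\|_{H^{s}}+\|v_x\|_{\infty}\|h\|_{H^{s-1}}$); the hidden cost is that it identifies $\|v\|_{H^{s}},\|h\|_{H^{s-1}}$ with $\|u\|_{H^{s}},\|\rho\|_{H^{s-1}}$ and commutes the nonlocal operator and $\partial_x$ with the composition, none of which is justified there. You stay in Eulerian variables, where the commutator $[\Lambda^{s-1}\partial_x,u]\rho$ cannot be avoided.

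That commutator is the one unsupported step in your write-up. The refined bound you invoke, $\|[\Lambda^{s-1}\partial_x,u]\rho\|_{L^{2}}\lesssim\|u_x\|_{L^{\infty}}\|\rho\|_{H^{s-1}}+\|u\|_{H^{s}}\|\rho\|_{L^{\infty}}$, does not follow from the paper's stated tools: Proposition \ref{p21} gives only the cubic bound $\|u\|_{H^{s}}\|\rho\|_{H^{s-1}}$ (as you note), and the natural splitting $[\Lambda^{s-1}\partial_x,u]\rho=\Lambda^{s-1}(u_x\rho)+[\Lambda^{s-1},u]\rho_x$ handles the first piece by Proposition \ref{p24} but, via Proposition \ref{p22}, leaves $\|\Lambda^{s-1}u\|_{L^{2}}\|\rho_x\|_{L^{\infty}}$ in the second, which is not controlled by $E$ when $\frac{3}{2}<s\leq\frac{5}{2}$. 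The estimate you want is in fact true for $s-1>0$ (a Bony paraproduct argument works: the para-commutator term is bounded by $\|u_x\|_{L^{\infty}}\|\rho\|_{H^{s-1}}$, and in the terms where the derivative falls on the low-frequency factor $\rho$ it costs one power of the high frequency of $u$, giving $\|u\|_{H^{s}}\|\rho\|_{L^{\infty}}$), so your proof is complete once this lemma is proved or cited; without it there is a real gap precisely at the point your own last paragraph identifies as delicate. In exchange, your Eulerian route avoids the paper's unexamined issues with Sobolev norms under the flow map, so the two approaches trade one technical burden for another.
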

\begin{proof}
Indeed, the sufficiency is obvious due to the embedding theorem, here we mainly pay more attention on the proof of necessity.
 By (\ref{e42}), we know that $q(t, \cdot)$ is an increasing diffeomorphism of $R$ with
 \begin{align}\label{a421}
 & \inf_{x\in R}u_{x}(t, q(t,x))=\inf_{x\in R}u_{x}(t, x)  \nonumber \\
 & \sup_{x\in R}u_{x}(t, q(t,x))=\sup_{x\in R}u_{x}(t, x)
\end{align}
Let
 \begin{align}\label{a422}
 & v(t,x)= u(t, q(t,x)), \nonumber \\
 & h(t,x)= \rho(t, q(t,x)),
\end{align}
 since $u_{x}(t, q(t,x))= v_{x}(t, x)$, by equation (\ref{e31}), we have
\begin{align}\label{e423}
 &\frac{d}{dt}v = \partial_{x}(I-\partial_{x}^{2})^{-1}(h-v) \nonumber \\
 &\frac{d}{dt}h = hv_{x}+v_{x}.
\end{align}
 Applying $\Lambda^{s}$ to the first equation of (\ref{e49}) and multiplying by $\Lambda^{s}v$, then integrating from $R$ we obtain
\begin{align}\label{a424}
 &\frac{1}{2}\frac{d}{dt}\|\Lambda^{s}v\|_{L^{2}}^{2} =\int_{R}\Lambda^{s}\partial_{x}(I-\partial_{x}^{2})^{-1}(h-v)\Lambda^{s}vdx
\leq \|h\|_{H^{s-1}}\|v\|_{H^{s}}+\|v\|_{H^{s}}^{2}
\end{align}
Apply $\Lambda^{s-1}$ to the second equation of (\ref{e49}) and multiply by $\Lambda^{s-1}\gamma$ and integrate from $R$ to obtain
\begin{align}\label{a425}
 \frac{1}{2}\frac{d}{dt}\|\Lambda^{s-1}h\|_{L^{2}}^{2} &=\int_{R}\Lambda^{s-1}(hv_{x})\Lambda^{s-1}h dx +\int_{R}\Lambda^{s-1}\partial_{x}v\Lambda^{s-1}h dx \nonumber \\
&\leq \int_{R}\|\Lambda^{s-1}(hv_{x})\|_{L^{2}}\|\Lambda^{s-1}h\|_{L^{2}}dx +\int_{R}\Lambda^{s-1}\partial_{x}v\Lambda^{s-1}h dx\nonumber \\
&\leq \|hv_{x}\|_{H^{s-1}}\|h\|_{H^{s-1}}+\|v\|_{H^{s}}\|h\|_{H^{s-1}}\nonumber \\
&\lesssim (\|h\|_{\infty}\|v\|_{H^{s}}+\|h\|_{H^{s-1}}\|v_{x}\|_{\infty})\|h\|_{H^{s-1}}+\|v\|_{H^{s}}\|h\|_{H^{s-1}}
\end{align}
Adding (\ref{a424}) to (\ref{a425}), we have
\begin{align}\label{a426}
 &\frac{1}{2}\frac{d}{dt}\|\Lambda^{s}v\|_{L^{2}}^{2}+\frac{1}{2}\frac{d}{dt}\|\Lambda^{s-1}h\|_{L^{2}}^{2} \nonumber \\
&\lesssim \|h\|_{H^{s-1}}\|v\|_{H^{s}}+\|v\|_{H^{s}}^{2}+(\|h\|_{\infty}\|v\|_{H^{s}}+\|h\|_{H^{s-1}}\|v_{x}\|_{\infty})
\|h\|_{H^{s-1}}+\|v\|_{H^{s}}\|h\|_{H^{s-1}}
\nonumber \\
&\lesssim(\| h\|_{\infty}+\| v_{x} \|_{\infty}+1)(\|h\|_{H^{s-1}}^{2}+\|v\|_{H^{s}}^{2})
\end{align}
By the Gronwall's inequality and (\ref{a426}), we obtain
\begin{equation}\label{e427}
\|h\|_{H^{s-1}}^{2}+\|v\|_{H^{s}}^{2}\lesssim exp \int_{0}^{t}(\|\rho\|_{\infty}+
\| u_{x}\|_{\infty}+1)(s,q)ds
\end{equation}

 When $\|h\|_{H^{s-1}}^{2}+\|v\|_{H^{s}}^{2}\rightarrow \infty $ with $T < \infty$, by (\ref{e427}) we can get $\| \rho \|_{\infty}\rightarrow \infty$ or $\| u_{x}\|_{\infty}\rightarrow \infty $.\\
 {\bf case 1}: If $\| u_{x}\|_{\infty}\rightarrow \infty $, combining with the upper bound found in Lemma \ref{lem44} we finish the proof.\\
 {\bf case 2}: If $\|\rho \|_{\infty}\rightarrow \infty$, i.e. $\|\overline{\rho} \|_{\infty}\rightarrow \infty$, then $u_{x} < 0$. Otherwise, by(\ref{e42}) and (\ref{e43}), we can get
\begin{equation}\label{e428}
 \|\overline{\rho} \|_{\infty}=\|\rho(t, q(t,x))+1)\|_{\infty}\leq |q_{x}^{-1}(t,x)|~\| \rho_{0}(x)+1\|_{\infty}\leq
 \| \rho_{0}(x)+1\|_{\infty},
\end{equation}
which is a contradiction. From (\ref{e42}) and (\ref{e428}), we can deduce $\lim_{t\rightarrow T}\inf_{x\in R}{u_{x}(t,x)}=-\infty$.
 \end{proof}

\subsection{Sufficient conditions of wave breaking}
In this subsection, we will show some sufficient conditions on wave breaking.
\begin{theorem}\label{th42}
 Let $(u_{0}, \rho_{0})\in H^{s}\times H^{s-1}$ and $s>\frac{3}{2}$, $T$ is the maximal existence time
 of the corresponding solution $(u, \rho)$ of (\ref{e31}). Assume that there exists $x_{0}\in R$ such that $u_{0}'(x_{0})\leq -(1+\varepsilon)\widetilde{K}$, where $\widetilde{K }$ is defined in (\ref{a434}), then $T$ is finite and the slope of $u$ tends to negative infinity as $t$ tends to $T$.
 \end{theorem}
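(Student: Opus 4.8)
The plan is to monitor the minimal slope $m(t)=\inf_{x\in R}u_{x}(t,x)$ along the flow $q(t,x)$ of (\ref{e41}) and to show that, under the hypothesis on $u_{0}'(x_{0})$, it obeys a Riccati differential inequality that drives $m(t)\to-\infty$ in finite time. First I would invoke Lemma \ref{lem43}: since $s>\frac{3}{2}$ the solution is regular enough that for each $t$ there is a point $\xi(t)$ with $m(t)=u_{x}(t,\xi(t))$, and $m$ is almost everywhere differentiable with $\frac{dm}{dt}=u_{tx}(t,\xi(t))$. Because $\xi(t)$ is an interior minimum of $x\mapsto u_{x}(t,x)$, we also have $u_{xx}(t,\xi(t))=0$.

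Next I would differentiate the first equation of (\ref{e31}) in $x$ to get $u_{tx}+u_{x}^{2}+uu_{xx}=f$, where $f=\partial_{x}^{2}(I-\partial_{x}^{2})^{-1}(\overline{\rho}-u)$ is exactly the quantity analysed in (\ref{e413}). Evaluating at $\xi(t)$ and using $u_{xx}(t,\xi(t))=0$ yields the scalar equation
\begin{equation*}
\frac{dm}{dt}=-m^{2}+f(t,\xi(t)),\qquad \text{a.e. } t\in[0,T).
\end{equation*}
The key analytic input is a uniform bound on the forcing term. Exactly as in the estimate (\ref{a414}) of Lemma \ref{lem44}, combined with the conservation of $\|\overline{\rho}\|_{L^{1}}$ and the growth bound on $\|u\|_{L^{2}}$ from Lemma \ref{lem52}, one obtains $|f(t,x)|\leq \widetilde{K}^{2}$ on the existence interval; this is the constant $\widetilde{K}$ of the statement, to be fixed in (\ref{a434}). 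Hence $\frac{dm}{dt}\leq -m^{2}+\widetilde{K}^{2}$.

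Finally I would run the ODE comparison. With $m(0)=u_{0}'(x_{0})\leq -(1+\varepsilon)\widetilde{K}$, the right-hand side $-m^{2}+\widetilde{K}^{2}$ is strictly negative at $t=0$, so $m$ is decreasing and the bound $m(t)\leq -(1+\varepsilon)\widetilde{K}$ persists (whenever equality holds the derivative is negative). On this set $\widetilde{K}^{2}\leq (1+\varepsilon)^{-2}m^{2}$, so with $\delta:=1-(1+\varepsilon)^{-2}\in(0,1)$ we get $\frac{dm}{dt}\leq -\delta m^{2}$. Writing $g=-m>0$ this reads $g'\geq \delta g^{2}$, which integrates to $g(t)\geq (g(0)^{-1}-\delta t)^{-1}$; thus $g(t)\to+\infty$ at some $t^{*}\leq \frac{1}{\delta(1+\varepsilon)\widetilde{K}}<\infty$. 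Consequently $\inf_{x\in R}u_{x}(t,x)\to-\infty$ as $t\to t^{*}$, which forces $T\leq t^{*}$ finite, and Theorem \ref{th41} identifies this blow-up with wave breaking.

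The main obstacle is the uniform control of $f$: since $\|u\|_{L^{2}}$ in Lemma \ref{lem52} grows only linearly in $t$, the natural bound $\widetilde{K}^{2}=\|\overline{\rho}_{0}\|_{L^{1}}+K_{1}(T)$ depends on $T$, which is itself what we are trying to bound, so some care is needed to avoid circularity. The clean way is to fix \emph{a priori} an upper bound for the blow-up time — the explicit $t^{*}\leq \frac{1}{\delta(1+\varepsilon)\widetilde{K}}$ above — and to require the bound on $f$ only on the finite interval $[0,t^{*}]$, on which Lemma \ref{lem52} keeps $K_{1}$ finite; this self-consistently fixes $\widetilde{K}$ in (\ref{a434}). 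A secondary technical point is justifying $\frac{dm}{dt}=u_{tx}(t,\xi(t))$ and $u_{xx}(t,\xi(t))=0$ at the regularity $s>\frac{3}{2}$, which is precisely what Lemma \ref{lem43} supplies.
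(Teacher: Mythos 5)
Your overall strategy (a Riccati inequality for the slope followed by an ODE comparison) is the same as the paper's, and your handling of the comparison step itself is sound. The genuine gap is in how you bound the forcing term $f=\partial_{x}^{2}(I-\partial_{x}^{2})^{-1}(\overline{\rho}-u)$. You propose to control it ``exactly as in (\ref{a414})'', i.e.\ through the conservation of $\|\overline{\rho}\|_{L^{1}}$ and the $L^{2}$ growth bound of Lemma \ref{lem52}. But those estimates are established under the hypothesis $\rho_{0}+1\geq 0$ (it is assumed in Lemma \ref{lem52} and in Lemma \ref{lem44}), and Theorem \ref{th42} makes no such assumption; you would be proving a weaker statement. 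Worse, the pattern of (\ref{a414}) only controls the convolution part of the operator: since $\partial_{x}^{2}(I-\partial_{x}^{2})^{-1}g=p\ast g-g$ with $p=\frac{1}{2}e^{-|x|}$, the forcing at your minimum point contains the local terms $-\overline{\rho}(t,\xi(t))+u(t,\xi(t))$, and pointwise values of $\overline{\rho}$ and $u$ are simply not controlled by $\|\overline{\rho}\|_{L^{1}}$ and $\|u\|_{L^{2}}$. In particular, when $\overline{\rho}(t,\xi(t))$ is large and negative the term $-\overline{\rho}$ pushes $m$ \emph{upward}, so without a sign condition your inequality $\frac{dm}{dt}\leq -m^{2}+\widetilde{K}^{2}$ is not justified on the existence interval. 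Your self-consistency device fixes the $T$-dependence of the constant, but not this missing pointwise control: bounding $\|\overline{\rho}\|_{\infty}$ requires a bound on $u_{x}$ through the transport equation, which is exactly the quantity under investigation.

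The paper breaks precisely this circle by arguing by contradiction rather than via conservation laws: it assumes the slope does not blow up, so $\|u_{x}\|_{\infty}\leq C(T)$ on every finite interval; then $\frac{d}{dt}\gamma=-\gamma m$ along characteristics gives $\|\overline{\rho}\|_{\infty}\leq(\|\rho_{0}\|_{\infty}+1)e^{C(T)t}=K_{2}(T)$ as in (\ref{e431}), and $f$ --- written in (\ref{e52}) so that the $u$-part is the pure convolution $\partial_{x}(I-\partial_{x}^{2})^{-1}u_{x}$ --- is bounded pointwise by $K_{2}(T)+C(T)$ in purely $L^{\infty}$ terms (\ref{e432}), with no sign condition and no integrability of $\overline{\rho}_{0}$. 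The constant $\widetilde{K}=J(T_{1})$ and the time horizon $T_{1}$ are then fixed simultaneously by the implicit equation (\ref{a434}), $T_{1}J^{2}(T_{1})=\log(1+\frac{1}{\varepsilon})$, and the Riccati comparison forces $m$ to diverge strictly before $T_{1}$, contradicting the boundedness assumption; this is how both the finiteness of $T$ and the divergence of the slope are obtained at once. A secondary point: you track the global infimum and assert $u_{xx}(t,\xi(t))=0$, which requires more smoothness than $H^{s}$, $s>\frac{3}{2}$, provides; the paper's proof of Theorem \ref{th42} sidesteps this by following the fixed characteristic $q(t,x_{0})$ of (\ref{e41}), along which the $uu_{xx}$ contribution cancels by the chain rule, see (\ref{e429}).
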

\begin{proof}
Define $m(t)= \inf_{x\in R} u_{x}(t, q(t,x_{0}))$ and $\gamma(t)= \rho(t, q(t,x_{0}))+1$.
By (\ref{e31}), we have
\begin{align}\label{e429}
 &\frac{d}{dt}m =-m^{2}+ (p\ast\gamma-\gamma)(t,x_{0})+(-p\ast u+u)(t, q(t,x_{0})),
 \nonumber \\
 &\frac{d}{dt}\gamma = -\gamma m,
\end{align}
where $p(x)=\frac{1}{2}e^{-|x|}$. Let
\begin{align}\label{e52}
f= \partial_{x}^{2}(I-\partial_{x}^{2})^{-1}\overline{\rho}(t,q)-\partial_{x}(I-\partial_{x}^{2})^{-1}u_{x}(t,q),
\end{align}

If we can find $T < \infty$ such that $\lim_{t\uparrow T} u_{x}\rightarrow -\infty$, blow up occurs.

 Otherwise, for any $T < \infty$ such that $ \|u_{x}\|_{\infty}\leq C(T)$, we have
\begin{equation}\label{e431}
\|\overline{\rho}(t,x)\|_{\infty}\leq (\|\rho_{0}\|_{\infty}+1)e^{C(T)t}\leq K_{2}(T).
\end{equation}
and
\begin{align}\label{e432}
\mid f\mid\leq \|p\|_{L^{1}}\|\overline{\rho}\|_{L^{\infty}}+ \|p\|_{L^{1}}\|u_{x}\|_{L^{\infty}}
\leq K_{2}(T)+C(T)
\end{align}
We obtain the relation
\begin{align}\label{a433}
 \frac{d}{dt}m \leq -m^{2}+ K_{2}(T)+C(T)
\end{align}
Taking $J(T_{1})=\sqrt{K_{2}(T_{1})+C(T_{1})}$, it satisfies
\begin{align}\label{a434}
 T_{1}J^{2}(T_{1})=log({1+\frac{1}{\varepsilon}}).
\end{align}
Since $T_{1}J^{2}(T_{1})$ is a continuous function of $T_{1}$ and $log({1+\frac{1}{\varepsilon}})\in R^{+}$,  for fixed $\varepsilon$, the function (\ref{a434}) has solution $T_{1}$.

 In views of (\ref{a434}), it follows for all $t\in[0, T_{1}]\cap[0, T)$ that
\begin{align}\label{a435}
 \frac{d}{dt}m \leq -m^{2}+ J(T_{1})^{2}
\end{align}
By the assumption of the theorem, we find
\begin{align}\label{a436}
 m(0)< - (1+\varepsilon)J(T_{1})
\end{align}
which implies that
\begin{align}\label{a437}
 1 < \frac{m(0)-J(T_{1})}{m(0)+J(T_{1})}< 1+\frac{2}{\varepsilon}
\end{align}

Since $t=0$, $m'(0)< 0$ from (\ref{a435}) and (\ref{a436}), standard argument of continuity shows
$$m(t)< -(1+\varepsilon)J(T_{1}),$$
 for all $t\in[0, T_{1}]\cap[0, T)$.

and
\begin{align}\label{a438}
 log\frac{m(t)+ J(T_{1})}{m(t)-J(T_{1})}\geq log \frac{m(0)+ J(T_{1})}{m(0)-J(T_{1})}+ 2J(T_{1})t
\end{align}

since $ 0 <\frac{m(0)+ J(T_{1})}{m(0)-J(T_{1})} < 1$, by (\ref{a437}) and (\ref{a438}), we have
\begin{align}\label{a439}
 \widetilde{T}\leq \frac{log \frac{m(0)-J(T_{1})}{m(0)+J(T_{1})}}{2J(T_{1})} < \frac{log (1+\frac{1}{\varepsilon})}{J^{2}(T_{1})}=T_{1}< T
\end{align}
such $\lim_{t\uparrow\widetilde{T}}m(t)=-\infty $, which is a contradictionㄛwhich concludes the proof of the theorem.
\end{proof}
\begin{theorem}\label{th43}
  Let $(u_{0}, \rho_{0})\in H^{s}(R)\times H^{s-1}(R)$ and $s> \frac{3}{2}$, $T$ is the maximal existence time
 of the corresponding soution $(u, \rho)$ of (\ref{e31}), satisfies $\rho_{0}+1\geq 0$ and
\begin{equation*}
 \inf_{x\in R}u_{0}'(x)+\sup_{x\in R}u_{0}'(x)\leq -2
\end{equation*}
 without loss of generality, here, we assume $\|\overline{\rho}_{0}\|_{L^{1}}=1$,
then we observe wave-breaking for the solution of (\ref{e31}) with initial data $u_{0}$
\end{theorem}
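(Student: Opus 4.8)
The plan is to reduce wave-breaking to the blow-up criterion of Theorem \ref{th41}, i.e.\ to exhibit a finite $T$ with $\inf_{x\in R}u_{x}(t,x)\to-\infty$ as $t\uparrow T$. Set
\[
M(t)=\sup_{x\in R}u_{x}(t,x),\qquad m(t)=\inf_{x\in R}u_{x}(t,x),
\]
and let $\xi_{1}(t),\xi_{2}(t)$ be points where these extrema are attained; the existence of $\xi_{2}$ and the almost-everywhere differentiability of $m$ come from Lemma \ref{lem43}, and the same lemma applied to $-u$ furnishes $\xi_{1}$ and the differentiability of $M$. Since $u_{xx}$ vanishes at an interior extremum of $u_{x}$, differentiating the first equation of (\ref{e31}) in $x$ and evaluating along the flow through $\xi_{1},\xi_{2}$ yields, exactly as in (\ref{e412})--(\ref{e413}), the two Riccati equations
\[
M'(t)=-M^{2}+f(t,\xi_{1}),\qquad m'(t)=-m^{2}+f(t,\xi_{2}),
\]
with forcing $f=\partial_{x}^{2}(I-\partial_{x}^{2})^{-1}(\overline{\rho}-u)=p\ast\overline{\rho}-\overline{\rho}-p'\ast u_{x}$, where $p(x)=\tfrac12 e^{-|x|}$.

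I would then monitor the sum $N(t)=M(t)+m(t)$, whose initial value is $N(0)=\sup_{x}u_{0}'+\inf_{x}u_{0}'\le-2$ by hypothesis. Adding the two Riccati equations and using $M^{2}+m^{2}\ge\tfrac12(M+m)^{2}$ gives
\[
N'(t)\le-\tfrac12 N^{2}+\bigl(f(t,\xi_{1})+f(t,\xi_{2})\bigr).
\]
The crux is to bound the forcing by a controllable constant. Since $\rho_{0}+1\ge0$, Lemma \ref{lem51} together with $q_{x}>0$ gives $\overline{\rho}=\rho+1\ge0$, so the terms $-\overline{\rho}(\xi_{i})$ are $\le0$ and may be discarded; moreover $\|\overline{\rho}\|_{L^{1}}$ is conserved by Lemma \ref{lem52}, whence with the normalization $\|\overline{\rho}_{0}\|_{L^{1}}=1$ one has $p\ast\overline{\rho}\le\|p\|_{L^{\infty}}\|\overline{\rho}\|_{L^{1}}=\tfrac12$ at each point. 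For the velocity part I would rewrite $p'\ast u_{x}=p\ast u-u$ and bound $|p\ast u|\le\|p\|_{L^{2}}\|u\|_{L^{2}}$ using the linear-in-time $L^{2}$ bound of Lemma \ref{lem52}; the remaining term $\|u\|_{L^{\infty}}$ is controlled by combining the upper bound on $\sup_{x}u_{x}$ from Lemma \ref{lem44} with that same $L^{2}$ bound, since a slope bounded above together with finite $L^{2}$ mass forbids tall spikes and yields $\|u\|_{L^{\infty}}\le C(T)$ on any finite interval. This produces $f(t,\xi_{1})+f(t,\xi_{2})\le J^{2}(T)$ with $J(T)$ explicit and finite.

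The inequality then reads $N'\le-\tfrac12 N^{2}+J^{2}(T)$ on $[0,T_{1}]\cap[0,T)$, where, as in the proof of Theorem \ref{th42}, I would fix $T_{1}$ through a transcendental relation of the form $T_{1}J^{2}(T_{1})=\log(1+\tfrac1\varepsilon)$ so that $J$ is frozen to a constant on the relevant interval. The comparison ODE $y'=-\tfrac12 y^{2}+J^{2}$ has equilibria at $\pm\sqrt2\,J$, and any datum lying strictly below $-\sqrt2\,J$ drives $y$ to $-\infty$ in finite time; here the normalization $\|\overline{\rho}_{0}\|_{L^{1}}=1$ and the threshold $-2$ are matched precisely so that $N(0)\le-2$ sits below this equilibrium. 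A standard comparison argument then forces $N(t)\to-\infty$ at some finite $\widetilde{T}\le T_{1}<T$. Because $M\ge0$ by (\ref{e410}) we have $m\le N$, so $m(t)\to-\infty$ as well, and Theorem \ref{th41} gives wave-breaking in finite time.

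The main obstacle is precisely the forcing estimate: unlike the Camassa--Holm case there is no $H^{1}$ conservation law, so $\|u\|_{L^{\infty}}$ is not controlled for free and the convolution term $p'\ast u_{x}$ cannot simply be absorbed. The decisive ingredient is therefore the a priori $L^{\infty}$ bound on $u$ extracted from Lemma \ref{lem44} and Lemma \ref{lem52} together, followed by the careful choice of $T_{1}$ that keeps $J^{2}$ small enough for the clean threshold $N(0)\le-2$ to trigger the Riccati blow-up; closing this matching, rather than the Riccati integration itself, is the delicate step.
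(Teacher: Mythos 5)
Your setup (the extremal points $\xi_{1},\xi_{2}$ from Lemma \ref{lem43} and the two Riccati equations, as in (\ref{a440})) agrees with the paper, but the proof fails at exactly the step you flag as ``delicate'', and it cannot be repaired along your route. You bound the velocity part of the forcing by $\|p\|_{L^{2}}\|u\|_{L^{2}}+\|u\|_{L^{\infty}}$, so your constant $J^{2}(T)$ depends on the \emph{size} of the solution (hence of the initial data) and grows with $T$. Your comparison argument then needs the initial value $N(0)\le -2$ to lie strictly below the equilibrium $-\sqrt{2}\,J$, i.e.\ it needs $J^{2}<2$. Nothing in the hypotheses delivers this: the theorem assumes only $\|\overline{\rho}_{0}\|_{L^{1}}=1$, $\rho_0+1\ge0$ and $\inf u_{0}'+\sup u_{0}'\le-2$, which is compatible with $\|u_{0}\|_{L^{\infty}}$ and $\|u_{0}\|_{L^{2}}$ being arbitrarily large, in which case $J^{2}(T)\ge 2$ already at $t=0$. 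The claim that the normalization and the threshold $-2$ ``are matched precisely'' is therefore not an argument but a false assertion; even the two $\rho$-contributions alone use up $1$ of the available $2$, and the $u$-contributions are unbounded over the admissible data class. The transcendental choice $T_{1}J^{2}(T_{1})=\log(1+\tfrac{1}{\varepsilon})$ borrowed from Theorem \ref{th42} does not help either: that device works there because the hypothesis of Theorem \ref{th42} contains a data- and $T_{1}$-dependent threshold $\widetilde{K}=J(T_{1})$, whereas here the threshold is the absolute constant $-2$. (Your auxiliary claim that a one-sided slope bound plus the $L^{2}$ bound controls $\|u\|_{L^{\infty}}$ is true but unproven; it is a secondary issue.)

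The missing idea is that the nonlocal velocity term must be estimated pointwise by the \emph{slopes themselves}, not by norms of $u$. From the kernel representation (\ref{a441}) and $m\le u_{y}\le M$ one gets $\bigl|\partial_{x}(I-\partial_{x}^{2})^{-1}u_{x}\bigr|\le\tfrac{1}{2}(M-m)$ everywhere, while the density part is bounded by $\|\overline{\rho}\|_{L^{1}}=\|\overline{\rho}_{0}\|_{L^{1}}=1$ using positivity and conservation (Lemmas \ref{lem51}, \ref{lem52}), as in (\ref{a442}). So the forcing is at most $\tfrac{1}{2}(M-m)+1$, with no trace of $\|u\|_{L^{\infty}}$, and this changes the structure of the argument: instead of a constant forcing one gets a forcing linear in $M-m$, which the Riccati structure can absorb. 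The paper keeps the two inequalities (\ref{a443}) separate, first proves that the region $\{m+M+2\le 0\}$ is forward invariant (this is (\ref{a445})--(\ref{a446}); on the boundary $m+M=-2$ one has $M\ge0$, hence $m\le -2$, and the summed right-hand side reduces to $-2(m+1)(m+2)\le0$), and only then feeds the invariant $m+M\le-2$ back into the $m$-inequality: completing the square turns it into $\frac{d}{dt}\widetilde{m}\le-\widetilde{m}^{2}$ with $\widetilde{m}=m+\tfrac12\le-\tfrac32<0$, which is (\ref{a447}) and integrates to $\widetilde{m}(t)\to-\infty$ before $t=1/|\widetilde{m}(0)|$. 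Note that this argument is autonomous --- no $C(T)$, $K_{2}(T)$ or $T_{1}$ appears --- which is precisely what makes the clean, data-size-independent threshold $-2$ attainable; a forcing bound built from $\|u\|_{L^{\infty}}$, as in your proposal, can never reach it.
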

 \begin{proof}
 Let's introduce the
 \begin{equation*}
 m(t)=\inf_{x\in R}u_{x}(t, x)~~~~~~~~~~~~M(t)=\sup_{x\in R}u_{x}(t, x),
 \end{equation*}
 applying lemma \ref{lem43} with appropriate $\xi_{1}(t)$ and $\xi_{2}(t)$ to these functions, we have
  \begin{equation*}
 m(t)=u_{x}(t, \xi_{1}(t))~~~~~~~~~~~~M(t)=u_{x}(t, \xi_{2}(t))
 \end{equation*}
 Differentiating the first equation of (\ref{e31}) with respect to x and evaluating the result equation at $\xi_{1}(t)$ and $\xi_{2}(t)$, we get that, for a.e. $t \in [0, T)$,
 \begin{align}\label{a440}
  m'(t)+m^{2}=\partial_{x}^{2}(I-\partial_{x}^{2})^{-1}(\rho-u)(t,\xi_{1}(t)),\nonumber  \\
  M'(t)+M^{2}=\partial_{x}^{2}(I-\partial_{x}^{2})^{-1}(\rho-u)(t,\xi_{2}(t)).
 \end{align}
 By observing
 \begin{align}\label{a441}
 \partial_{x}(I-\partial_{x}^{2})^{-1}u_{x}=
 -\frac{e^{-x}}{2}\int_{-\infty}^{x}e^{y}u_{y}dy+\frac{e^{x}}{2}\int_{x}^{+\infty}e^{-y}u_{y}dy,
 \end{align}
 here, $\| \overline{\rho}_{0}\|_{L^{1}}=1$ and
 \begin{align}\label{a442}
 |\partial_{x}^{2}(I-\partial_{x}^{2})^{-1}\rho|=|\int_{R}e^{-|x-y|}sign(x-y)\rho dy|\leq \|\overline{\rho}\|_{L^{1}}=\|\overline{\rho}_{0}\|_{L^{1}},
 \end{align}
 we can get
 \begin{align}\label{a443}
 -m'\leq-m^{2}+\frac{1}{2}(M-m)+1,\nonumber \\
 -M'\leq-M^{2}+\frac{1}{2}(M-m)+1.
 \end{align}
 Since
 \begin{align}\label{a444}
 -m'=-m^{2}+\frac{1}{2}(M-m)+1= -(m+\frac{1}{2})^{2}+\frac{1}{2}(m+M)+1,
 \end{align}
 summing up, we get
 \begin{align}\label{a445}
 \frac{d}{dt}(m+M)&\leq -m^{2}+(M+2)^{2}-2M^{2}-4M-2 \nonumber \\
 &\leq -m^{2}+(M+2)^{2}=(M-m+2)(M+m+2).
 \end{align}
 For $m(0)+M(0)+2 \leq 0$ at time $t=0$, (\ref{a445}) and standard argument of continuity shows
\begin{align}\label{a446}
m(t)+M(t)+2\leq 0 ~~{\rm for~~all~~} t\in[0, T).
\end{align}
Denoting $\widetilde{m}(t)=m(t)+\frac{1}{2}$, $t\in[0, T)$, by (\ref{a446}), we see that $\widetilde{m}(0)< 0$ and
\begin{align}\label{a447}
\frac{d}{dt}\widetilde{m}\leq -\widetilde{m}^{2} ~~~a.e~~(0, T).
\end{align}
Integrating (\ref{a447}) yields
\begin{align}\label{a448}
\frac{1}{\widetilde{m}(t)}\geq \frac{1}{\widetilde{m}(0)}+t,~~~t\in[0, T),
\end{align}
so that $\widetilde{m}(t)\rightarrow -\infty$ before $t$ reaches $\frac{1}{|\widetilde{m}(0)|}$, thus proving that the wave breaks in finite time.
 \end{proof}

\section{Travelling waves}
In this Section, we are devoted to establish analytically the existence of travelling solutions of the system (\ref{e11}). Firstly we assume
 \begin{equation}\label{e51}
 \left\{ \begin{array}{ll}
 u(x,t)= \phi(y), \\
  \overline{\rho}(x,t)= \psi(y),
 \end{array} \right.
 \end{equation}
 where $y=x-ct$, $c > 0$ being the speed of travelling wave. Thus, the system (\ref{e11}) would be transformed into
 \begin{equation}\label{e52}
 \left\{ \begin{array}{ll}
 -c\phi_{y}+ \phi\phi_{y}= \partial_{y}(I-\partial_{y}^{2})(\psi-\phi), \\
 -c\psi_{y}+\phi\psi_{y}+\phi_{y}\psi=0.
 \end{array} \right.
 \end{equation}
 Integrating (\ref{e52}) from $0$ to $y$, we can get
 \begin{equation}\label{e53}
 \left\{ \begin{array}{ll}
 -c\phi+ \frac{1}{2}\phi^{2}-(I-\partial_{y}^{2})^{-1}(\psi-\phi)=A, \\
 -c\psi+\phi\psi= B,
 \end{array} \right.
 \end{equation}
 for some real constant $A$, $B$. Here we shall consider the case when $ B=cA\geq 0$. In fact, if $ B=cA= 0$ holds, (\ref{e53}) will be reduced to
 \begin{align}\label{a54}
 -c\phi+ \frac{1}{2}\phi^{2}+(I-\partial_{y}^{2})^{-1}\phi=0,
\end{align}
 due to the fact $\phi < c$, which is corresponding to the travelling waves form of the $Fornberg-Whitham$ equation.

 Substituting the second equation into the first equation in (\ref{e53}) yields
\begin{align}\label{a55}
 -c\phi+ \frac{1}{2}\phi^{2}-(I-\partial_{y}^{2})^{-1}(\frac{cA}{\phi-c}-\phi)-A =0,
\end{align}
 which is equivalent to
 \begin{align}\label{a56}
 \phi- \frac{1}{2c}\phi^{2}+\frac{1}{2c}e^{-|y|}\ast(\frac{cA}{\phi-c}-\phi)+\frac{A}{c} =0.
\end{align}

Then the Crandall-Rabinowitz local bifurcation theorem (see \cite{Deng1}) would be used to prove the existence of travelling wave, here we state it again for our purposes.
\begin{lemma}\label{lem61}
Let $W$ be a Banach space and $F\in C^{k}(R\times W, W)$ with $k\geq2$ satisfy

(1)$F(c,0)=0$ for all $c\in R^{+}$;

(2)$L=\partial_{\phi}F(c^{*},0)\in L(W,W)$ is a Fredholm operator of index zero with $ker{L}$ one-dimensional;

(3)$[\partial^{2}_{c\phi}F(c^{*},0)](1,ker(L))\notin R(L)$ holds, where $\partial^{2}_{c\phi}F(c^{*},0)=\partial_{c}[\partial_{\phi}F(c,0)]|_{c=c^{*}}\in L(R\times W, W)$;\\
Then there exists $\varepsilon>0$ and a continuous bifurcation curve $\{(c_{s},\phi_{s}): |s|<\varepsilon \}$ with $c_{s}|_{s=0}=c^{*}$, where $c^{*}$ is a bifurcation point, such that $\phi_{0}$ is the trivial solution of (\ref{a56}), and $\{ \phi_{s}: s\neq0\}$ is a family of nontrivial solutions with corresponding wave speeds $\{c_{s}\}_{s}$. Moreover, $dist(\phi_{s}, ker(L))=o(s)$ in $W$.
\end{lemma}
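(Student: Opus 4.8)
The plan is to prove this by the classical Lyapunov--Schmidt reduction combined with two applications of the implicit function theorem, so that the infinite-dimensional problem $F(c,\phi)=0$ collapses to a scalar equation whose solvability is controlled precisely by the transversality hypothesis (3).

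First I would exploit the Fredholm structure in (2). Since $L=\partial_{\phi}F(c^{*},0)$ has index zero with one-dimensional kernel, $\dim\ker L=\mathrm{codim}\,R(L)=1$; write $\ker L=\mathrm{span}\{\phi_{0}\}$ and fix closed complements $W=\ker L\oplus X=R(L)\oplus Z$ with $\dim Z=1$. Let $P:W\to R(L)$ be the continuous projection with kernel $Z$, so $P|_{R(L)}=\mathrm{id}$ and $I-P$ annihilates $R(L)$. Writing $\phi=s\phi_{0}+w$ with $s\in R$ and $w\in X$, the equation $F(c,\phi)=0$ splits into the range equation $PF(c,s\phi_{0}+w)=0$ and the one-dimensional bifurcation equation $(I-P)F(c,s\phi_{0}+w)=0$.

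Next I would solve the range equation. The map $G(c,s,w)=PF(c,s\phi_{0}+w)$ satisfies $G(c^{*},0,0)=0$, and $\partial_{w}G(c^{*},0,0)=PL|_{X}=L|_{X}$ is an isomorphism of $X$ onto $R(L)$ (injective because $\ker L\cap X=\{0\}$, surjective because $R(L|_{X})=R(L)$). The implicit function theorem then yields a $C^{k}$ map $w=w(c,s)$ near $(c^{*},0)$ with $w(c^{*},0)=0$ and $G(c,s,w(c,s))\equiv0$. Because $F(c,0)=0$ forces $w(c,0)\equiv0$, I obtain the key identities $w(c,0)=0$ (hence $\partial_{c}w(c^{*},0)=0$) and, after differentiating the range equation in $s$ and using $L\phi_{0}=0$, also $\partial_{s}w(c^{*},0)=0$. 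Substituting back produces the scalar function $\Phi(c,s)=(I-P)F(c,s\phi_{0}+w(c,s))$ with $\Phi(c,0)\equiv0$, so I may factor out the trivial branch as $\Phi(c,s)=s\,\Psi(c,s)$ with $\Psi(c,s)=\int_{0}^{1}\partial_{s}\Phi(c,ts)\,dt$ of class $C^{k-1}$; nontrivial bifurcating solutions correspond exactly to the zeros of $\Psi$.

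The heart of the argument, and the step I expect to be the main obstacle, is the final implicit-function reduction for $\Psi(c,s)=0$. Using $\partial_{s}w(c^{*},0)=0$ one first checks $\Psi(c^{*},0)=\partial_{s}\Phi(c^{*},0)=(I-P)L\phi_{0}=0$. Differentiating once more and cancelling every term that lands in $R(L)$ (annihilated by $I-P$), together with $\partial_{c}w(c^{*},0)=0$ and $\partial_{s}w(c^{*},0)=0$, the mixed derivative collapses to
\begin{equation*}
\partial_{c}\Psi(c^{*},0)=\partial^{2}_{cs}\Phi(c^{*},0)=(I-P)\,\partial^{2}_{c\phi}F(c^{*},0)\phi_{0}.
\end{equation*}
Here condition (3) is decisive: since $\partial^{2}_{c\phi}F(c^{*},0)\phi_{0}\notin R(L)$, its $Z$-component is nonzero, so $\partial_{c}\Psi(c^{*},0)\neq0$ in the one-dimensional space $Z$. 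The implicit function theorem then delivers a $C^{k-1}$ curve $c=c_{s}$ with $c_{0}=c^{*}$ solving $\Psi(c_{s},s)=0$, and setting $\phi_{s}=s\phi_{0}+w(c_{s},s)$ yields the asserted branch; the estimate $\mathrm{dist}(\phi_{s},\ker L)\le\|w(c_{s},s)\|=o(s)$ follows from $w(c^{*},0)=0$, $\partial_{s}w(c^{*},0)=\partial_{c}w(c^{*},0)=0$ and $c_{s}=c^{*}+O(s)$. The only delicate bookkeeping lies in verifying that all the $R(L)$-valued contributions in the second differentiation are correctly killed by $I-P$, so that the transversality hypothesis enters cleanly and reduces to a nonvanishing scalar.
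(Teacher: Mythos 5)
Your proposal is mathematically sound, but note first that the paper itself offers no proof of this statement: Lemma 5.1 is the classical Crandall--Rabinowitz local bifurcation theorem, quoted verbatim from the cited reference (Constantin's CBMS lecture notes) and used as a black box to prove Theorem 5.1. So there is no ``paper proof'' to compare against; what you have written is essentially the canonical Lyapunov--Schmidt argument by which this theorem is proved in the literature. Your reduction is correctly executed: the Fredholm-index-zero hypothesis gives the closed complements $W=\ker L\oplus X=R(L)\oplus Z$ and the projection $P$; the range equation is solved by the implicit function theorem because $L|_{X}:X\to R(L)$ is a bijective bounded operator between Banach spaces (hence an isomorphism); the identities $w(c,0)\equiv 0$, $\partial_{s}w(c^{*},0)=0$ follow as you say from uniqueness in the implicit function theorem and injectivity of $L|_{X}$; the factorization $\Phi(c,s)=s\Psi(c,s)$ with $\Psi\in C^{k-1}$ (so $C^{1}$, since $k\geq 2$) is legitimate; and your computation $\partial_{c}\Psi(c^{*},0)=(I-P)\partial^{2}_{c\phi}F(c^{*},0)\phi_{0}$ is exactly where hypothesis (3) enters, since $I-P$ has kernel precisely $R(L)$. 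Two small points worth making explicit if you write this up fully: (i) the nontriviality of $\phi_{s}$ for $s\neq 0$ follows because $s\phi_{0}\neq 0$ and the sum $\ker L\oplus X$ is direct, so $\phi_{s}=s\phi_{0}+w(c_{s},s)\neq 0$; (ii) you use the symbol $\phi_{0}$ both for the kernel generator and (implicitly, via the statement) for the trivial solution $\phi_{s}|_{s=0}=0$ --- these are different objects and should get different names.
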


\begin{theorem}\label{th51}
For a given $L > 0$, there exists a local bifurcation curve of $2L$-periodic, even and continuous solutions $\phi \in C[-L, L]$ of (\ref{e53}).
\end{theorem}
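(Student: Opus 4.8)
The plan is to recast the integral equation (\ref{a56}) as an abstract bifurcation problem $F(c,\phi)=0$ in a Banach space of even periodic functions and to apply the Crandall--Rabinowitz theorem (Lemma \ref{lem61}), with the wave speed $c$ as bifurcation parameter. Fix $L>0$ and set
\[
W=\{\phi\in C(R): \phi(y+2L)=\phi(y),\ \phi(-y)=\phi(y)\},
\]
identified with a closed subspace of $C[-L,L]$, and define
\[
F(c,\phi)=\phi-\frac{1}{2c}\phi^{2}+\frac{1}{c}(I-\partial_{y}^{2})^{-1}\Big(\frac{cA}{\phi-c}-\phi\Big)+\frac{A}{c},
\]
where on the $2L$-periodic circle $(I-\partial_{y}^{2})^{-1}$ is convolution against the periodized Green's function $G_{L}$, an even, positivity-preserving, \emph{compact} operator from $C$ into $C$. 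I would first record that $F(c,0)=0$ for every $c>0$: since $(I-\partial_{y}^{2})^{-1}1=1$, the contribution $\frac{1}{c}(I-\partial_{y}^{2})^{-1}\big(\frac{cA}{0-c}\big)=-\frac{A}{c}$ cancels the additive constant $\frac{A}{c}$, verifying hypothesis (1). The algebraic operations and the even kernel keep $F$ valued in $W$, so the only delicate point is regularity: $\phi\mapsto\frac{cA}{\phi-c}$ is smooth only where $\|\phi\|_{\infty}<c$, so I restrict to the open set $\{(c,\phi):c>0,\ \|\phi\|_{\infty}<c\}$ containing $(c^{*},0)$, on which $F\in C^{k}(R\times W,W)$ for every $k$.

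Second, I would compute the linearisation at the trivial branch. Differentiating in $\phi$ at $\phi=0$ gives
\[
L:=\partial_{\phi}F(c,0)=I-g(c)(I-\partial_{y}^{2})^{-1},\qquad g(c)=\frac{1}{c}\Big(1+\frac{A}{c}\Big)=\frac{c+A}{c^{2}}.
\]
As $(I-\partial_{y}^{2})^{-1}$ is compact, $L$ is a compact perturbation of the identity, hence Fredholm of index zero. On the basis $\{\cos(\pi k y/L)\}_{k\geq0}$ the operator $(I-\partial_{y}^{2})^{-1}$ acts by multiplication by $(1+(\pi k/L)^{2})^{-1}$, so $L$ is diagonal with eigenvalues $1-g(c)(1+(\pi k/L)^{2})^{-1}$. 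I would then select $c^{*}$ by forcing the fundamental mode $k=1$ into the kernel, $g(c^{*})=1+(\pi/L)^{2}$, which by $A\geq0$ has the admissible positive root
\[
c^{*}=\frac{1+\sqrt{1+4A\big(1+(\pi/L)^{2}\big)}}{2\big(1+(\pi/L)^{2}\big)}>0.
\]
Since $k\mapsto 1+(\pi k/L)^{2}$ is strictly increasing, $g(c^{*})$ matches it for no other $k$, so $\ker L=\mathrm{span}\{\cos(\pi y/L)\}$ is exactly one-dimensional, completing hypothesis (2).

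Third, I would verify the transversality condition (3). Differentiating $L$ in $c$ gives $\partial_{c}\partial_{\phi}F(c,0)=-g'(c)(I-\partial_{y}^{2})^{-1}$ with $g'(c)=-(c+2A)/c^{3}$, so $g'(c^{*})\neq0$. Applying this to the generator $v_{0}=\cos(\pi y/L)$ yields
\[
\big[\partial^{2}_{c\phi}F(c^{*},0)\big](1,v_{0})=-\,\frac{g'(c^{*})}{1+(\pi/L)^{2}}\,v_{0},
\]
a nonzero multiple of $v_{0}$. Since $L$ is a self-adjoint Fourier multiplier, $R(L)$ is the closed span of the modes with nonzero eigenvalue, i.e. of $\{\cos(\pi k y/L):k\neq1\}$, which excludes $v_{0}$; hence the vector above is not in $R(L)$. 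With (1)--(3) in hand, Lemma \ref{lem61} produces $\varepsilon>0$ and a $C^{1}$ curve $\{(c_{s},\phi_{s}):|s|<\varepsilon\}$ with $c_{0}=c^{*}$, $\phi_{0}=0$ and $\mathrm{dist}(\phi_{s},\ker L)=o(s)$, of nontrivial solutions $\phi_{s}\in W\subset C[-L,L]$ of (\ref{a56}); recovering $\psi_{s}=cA/(\phi_{s}-c_{s})$ then returns a bifurcation curve of $2L$-periodic, even, continuous solutions of (\ref{e53}).

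The step I expect to be the main obstacle is hypothesis (2), and within it two points: first, the functional-analytic bookkeeping that makes $F$ a genuine $C^{k}$ map near $(c^{*},0)$ despite the pole of $\frac{cA}{\phi-c}$ at $\phi=c$ --- this forces the neighborhood $\|\phi\|_{\infty}<c$ and a careful check that $G_{L}\ast$ is indeed compact from $C$ to $C$ with the claimed cosine spectrum; second, locating $c^{*}$ and proving the \emph{exact} one-dimensionality of the kernel, which relies on the strict monotonicity of the symbol $1+(\pi k/L)^{2}$. The Fredholm property and the transversality computation are then comparatively routine.
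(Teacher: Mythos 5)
Your proposal is correct and follows the same overall strategy as the paper: recast (\ref{a56}) as $F(c,\phi)=0$, check $F(c,0)=0$, linearize along the trivial branch to get the multiplier operator $L=I-\frac{c+A}{c^{2}}(I-\partial_{y}^{2})^{-1}$, pick the bifurcation speed so that exactly one cosine mode enters the kernel, verify transversality, and invoke the Crandall--Rabinowitz lemma. The differences are in the functional-analytic realization, and they are genuine. The paper works with $2\pi$-periodic functions in a Wiener-type space of absolutely convergent cosine series and proves the Fredholm property by exhibiting the splitting $W=\ker(L)\oplus R(L)$ by hand through the multiplier $\widehat{A}(k)=\frac{2}{1+k^{2}}$; you instead work in the sup-norm space of even continuous $2L$-periodic functions and obtain index zero for free, since convolution with the continuous periodized Green's function is compact on $C$, so $L$ is a compact perturbation of the identity --- a cleaner route that avoids the summability bookkeeping. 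Your version also treats the general period $2L$ (kernel $\mathrm{span}\{\cos(\pi y/L)\}$ and $c^{*}=\frac{1+\sqrt{1+4A(1+(\pi/L)^{2})}}{2(1+(\pi/L)^{2})}$), which is what Theorem \ref{th51} actually asserts, whereas the paper normalizes to $2\pi$-periodic solutions with $k_{0}=1$; your $c^{*}$ reduces to the paper's $c^{\star}=\frac{1+\sqrt{8A+1}}{4}$ at $L=\pi$. You moreover make explicit the restriction to $\{\|\phi\|_{\infty}<c\}$ needed for $F$ to be $C^{k}$ near $(c^{*},0)$ despite the pole of $\frac{cA}{\phi-c}$, a point the paper passes over silently. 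One touch-up: the appeal to ``self-adjointness'' of $L$ on a space of continuous functions is not meaningful as stated, but the fact you need, $\cos(\pi y/L)\notin R(L)$, follows at once by taking the first cosine coefficient in $Lu=\cos(\pi y/L)$: the left-hand side has vanishing coefficient at $k=1$ while the right-hand side does not; this is in substance the paper's observation that $\ker(L)\cap R(L)=\{0\}$.
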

\begin{proof}
 For verifying above three items in Crandall-Rabinowitz theorem, we define firstly the following function
 \begin{align}\label{a57}
 F(c,\phi)= \phi- \frac{1}{2c}\phi^{2}+\frac{1}{2c}e^{-|y|}\ast(\frac{cA}{\phi-c}-\phi)+\frac{A}{c},
\end{align}
It is easy to see $F(c, 0)=0$ for all $ c > 0$. Then we take the linearized equation
\begin{align}\label{a58}
 Lu(y):= u(y)-\frac{A+c}{2c^{2}}e^{-|y|}\ast u(y)=0
\end{align}
into consideration. Without loss of generality, we assume $u(y)$ is 2$\pi$-periodic, even and continuous function,
which gives $u \in L^{\infty}(R)$. Then taking fourier transform on (\ref{a58}), we have
\begin{align}\label{a59}
(1-\frac{A+c}{c^{2}(1+k^{2})})\widehat{u}(k)=0
\end{align}
in the sense of distributions.

Now, we assume that $k_{0}=k_{0}(c, A)> 0$, such that $(1+k_{0}^{2}) c^{2}= A + c$. By solving a quadratic equation
$ -c^{2}+ c+ A =0$, we know that
 \begin{equation}\label{e510}
 \left\{ \begin{array}{ll}
 \widehat{u}(k)=0 \rm {~for ~all~ k}, &  \textrm{$ \rm{if} \ c> \frac{1}{2}+\sqrt{A+\frac{1}{4}}$}, \\
 $\rm {the~support~of}$ ~\widehat{u}~ $\rm{is~in~ }$ \{\pm k_{0}\},& $\rm{if}$ ~0 <c < \frac{1}{2}+\sqrt{A+\frac{1}{4}}, \\
 $\rm {the~support~of}$ ~\widehat{u}~ $\rm{is~in~ }$ \{ 0\},& $\rm{if}$ ~ c =\frac{1}{2}+\sqrt{A+\frac{1}{4}}.
\end{array} \right.
 \end{equation}
 Thus the nontrivial even periodic solutions of linear problem (\ref{a58}) are given by
 \begin{equation}\label{e511}
 \left\{ \begin{array}{ll}
 u(y)=C, & c =\frac{1}{2}+\sqrt{A+\frac{1}{4}},\\
 u(y)=Ccos(k_{0}y),& c <\frac{1}{2}+\sqrt{A+\frac{1}{4}},
 \end{array} \right.
 \end{equation}
 where $C\in R$ be any nonzero constant. In our analysis, we expect to find the $2\pi$-periodic solutions with physical
 meaning, thus the constant solutions are discarded and let $k_{0}=1$ in (\ref{e511}), which leads to $c =\frac{1}{4}+\sqrt{\frac{8A+1}{16}}$ less than $\frac{1}{2}+\sqrt{A+\frac{1}{4}}$. Then we define $c^{\star}: =\frac{1}{4}+\sqrt{\frac{8A+1}{16}} $, and we obtain
 \begin{align}\label{a512}
 ker(L)=Ccosy,~~~~~~~\textrm{$\rm{with}$}~~~dim (ker(L))=1.
 \end{align}
For above $u(y)$, we can write
\begin{align}\label{a513}
 u(y)\sim \frac{1}{2\pi}\sum_{k\in Z}\widehat{u}(k)e^{iyk},
\end{align}
where $\widehat{u}(k)= \int_{-\pi}^{\pi}u(y)e^{-iyk}dy $, for $u \in L^{p}([-\pi, \pi]),~p \geq 1$.

In fact, the Carleso-Hunt theorem  in \cite{Jor} guarantees that the series (\ref{a513}) converges to $u(y)$ almost everywhere.
 The evenness of $u(y)$ will ensure
\begin{align}\label{a514}
 u(y)= \frac{1}{2\pi}\widehat{u}(0)+\frac{1}{\pi}\sum_{k=1}^{\infty}\widehat{u}(k)cos(ky)~~~~a.e~~~~\rm{on}~[-\pi, \pi].
\end{align}
Since $e^{-|y|}\in L^{1}(R)$, we can write the integral
 \begin{align}\label{a515}
  \int_{-\infty}^{+\infty} e^{-|y-z|}u(z)dz= \sum_{k=-\infty}^{+\infty}\int_{-\pi}^{\pi}e^{-|y-z+2k\pi|}u(z)dz=
  \int_{-\pi}^{\pi}(\sum_{k=-\infty}^{+\infty}e^{-|y-z+2k\pi|})u(z)dz
 := \int_{-\pi}^{\pi}A(y-z)u(z)dz
 \end{align}
 The Minkowski's inequality shows that $ A \in L^{p}([-\pi,\pi])$ for $p\geq 1$, and the definition of $A(y)$ implies
 that it's $2\pi$-periodic, even and continuous. Therefore, $A(y)$ can be writen:
 \begin{align}\label{a516}
 A(y)= \frac{1}{2\pi}\widehat{A}(0)+\frac{1}{\pi}\sum_{k=1}^{\infty}\widehat{A}(k)cos(ky)~~a.e~~\rm{on}~[-\pi, \pi].
\end{align}
The periodic problem is given by the same multiplier on the line, so we have
 \begin{align}\label{a517}
 e^{-|y|}\ast u(y)= \frac{1}{2\pi}\widehat{A}(0)\widehat{u}(0)+\frac{1}{\pi}\sum_{k=1}^{\infty}\widehat{A}(k)\widehat{u}(k)cos(ky)
 \end{align}
 holds almost everywhere on $[-\pi, \pi]$.
 Next, looking for $2\pi$-periodic, even and continuous solutions, we introduce the following Banach space
 \begin{align}\label{a518}
 W:=\{ u(y)= \frac{1}{2\pi}\widehat{u}(0)+\frac{1}{\pi}\sum_{k=1}^{\infty}\widehat{u}(k)cos(ky)\mid \|u\|:=
 \frac{1}{2\pi}|\widehat{u}(0)|+\frac{1}{\pi}\sum_{k=1}^{\infty}|\widehat{u}(k)|< \infty\},
 \end{align}
 and (\ref{a58}), (\ref{a514})and (\ref{a517}) imply
 \begin{align}\label{a519}
  Lu(y)= \frac{1}{2\pi}\widehat{u}(0)(1-\frac{A+c}{2c^{2}}\widehat{A}(0))+\frac{1}{\pi}\sum_{k=1}^{\infty}\widehat{u}(k)
  (1-\frac{A+c}{2c^{2}}\widehat{A}(k))cos(ky)
 \end{align}
 holds almost everywhere on $[-\pi, \pi]$.
 From the definition of $A(y)$, we have
 \begin{align}\label{a520}
  &\widehat{A}(k)=\int_{-\pi}^{+\pi}\sum_{k=-\infty}^{+\infty}e^{-|y-z+2j\pi|}e^{-iky}dy=
  \sum_{k=-\infty}^{+\infty}\int_{-\pi}^{\pi}e^{-|y+2j\pi|}e^{-i(ky+2j\pi)}dy
 = \int_{-\pi}^{\pi}e^{-|y|}e^{-iky}dy  \nonumber \\
 &=\widehat{e^{-|y|}}(k)=\frac{2}{1+k^{2}}
 \end{align}
 Thus it's easy from (\ref{a520}) to see
  \begin{align}\label{a521}
  \lim_{|k|\rightarrow \infty}\widehat{A}(k)=0,
  \end{align}
  which is consistent with Riemann-Lebesgue Lemma.
  Then (\ref{a519}) and (\ref{a521}) indicate
  \begin{align}\label{a522}
  \|Lu\|\leq (1+\frac{A+c}{2c^{2}}\max_{Z}\widehat{A}(k))\|u\|,
 \end{align}
 such that $L\in \mathcal{L}(W, W)$.
 Based on (\ref{a520}), we find that
  \begin{align}\label{a523}
  \widehat{A}(1)=1=\frac{2c^{\star}}{A+c^{\star}},
 \end{align}
  \begin{align}\label{a524}
   \widehat{A}(k)\neq\frac{2c^{\star}}{A+c^{\star}}, k\neq 1,
 \end{align}
 the equality (\ref{a523}) uses the definition of $c^{\star}$, and inequality (\ref{a524}) use the monotonicity of
 $\widehat{A}(k)$ on $N$.
 From (\ref{a512}), (\ref{a519}), (\ref{a523}) and (\ref{a524}), we know that
 \begin{align}\label{a525}
  W = ker(L)\oplus R(L),
 \end{align}
 that is to say
 \begin{align}\label{a526}
   dim(ker(L))= dim(W \backslash R(L))=1.
 \end{align}

 Finally, we take derivative with respect to bifurcation parameter $c$ on (\ref{a58}), and evaluate at $c^{\star}$ is
 \begin{align}\label{a527}
  (\partial_{c}L|_{c=c^{\star}})(1,u(y))=\frac{2A+c^{\star}}{2{c^{\star}}^{3}}e^{-|y|}\ast u(y).
 \end{align}
 By (\ref{a517}), we have that
 \begin{align}\label{a528}
 (\partial_{c}L|_{c=c^{\star}})(1,u(y))= M (\frac{1}{2\pi}\widehat{A}(0)\widehat{u}(0)+\frac{1}{\pi}\sum_{k=1}^{\infty}\widehat{A}(k)\widehat{u}(k)cos(ky)),
 \end{align}
 where $M=\frac{8+64A+8\sqrt{8A+1}}{(1+\sqrt{8A+1})^{3}}$ is a fixed constant.\\
 Therefore, by the same argument as (\ref{a522}), we would obtain from (\ref{a528}) that
 \begin{align}\label{a529}
 \|(\partial_{c}L|_{c=c^{\star}})(1,u(y))\|\leq M \max_{Z}{\widehat{A}(k)}\|u\|=2M\|u\|,
 \end{align}
 the equality is due to (\ref{a520}), which indicates $\partial_{c}L|_{c=c^{\star}}\in \mathcal{L}(R\times W, W)$.
 In particular, we choose $u(y)= ker(L)=Ccos(y)$ in (\ref{a528}), then
 \begin{align}\label{a530}
 (\partial_{c}L|_{c=c^{\star}})(1,ker(L))\cap R(L)= ker(L)\cap R(L)=\emptyset,
 \end{align}
 due to the support of $\mathcal{F}(cos(y))$ is in $\{\pm 1\}$ and (\ref{a525}).

 Up to now, we finish the proof.
\end{proof}

\section*{Acknowledgement}
The authors acknowledge the support of the National Natural Science Foundation of China (No.11571057).

\section*{References}

\bibliography{mybibfile}

\end{document}